
\documentclass{ws-ccm}
\usepackage{mathrsfs}
 \usepackage[colorlinks=true, pdfstartview=FitV, linkcolor=blue, citecolor=blue, urlcolor=blue]{hyperref}
\usepackage{amsfonts}
\usepackage{amssymb,xspace,enumerate,amscd,graphicx,color}
\usepackage[active]{srcltx}
\numberwithin{equation}{section}
\newcommand{\thmref}[1]{Theorem~\ref{#1}}

\newcommand{\Corref}[1]{Corollary~\ref{#1}}
\newcommand{\secref}[1]{\S\ref{#1}}
\newcommand{\lemref}[1]{Lemma~\ref{#1}}
\newcommand{\eqnref}[1]{~(\ref{#1})}

\def\s{\sigma}

\def\s{\sigma}
\def\Der{\operatorname{Der}}
\def\Aut{\operatorname{Aut}}
\def\Span{\operatorname{span}}
\def\Vir{\operatorname{Vir}}
\newcommand{\C}{\ensuremath{\mathbb C}\xspace}

\renewcommand{\a}{\ensuremath{\alpha}}
\renewcommand{\b}{\ensuremath{\beta}}

\newcommand{\N}{\mathbb{N}}
\newcommand{\Z}{\ensuremath{\mathbb{Z}}\xspace}

\newcommand{\W}{\ensuremath{\mathscr{V}}\xspace}
\newcommand{\R}{\ensuremath{\mathbb{R}}\xspace}

\renewcommand{\phi}{\varphi}

\renewcommand{\leq}{\leqslant}
\renewcommand{\geq}{\geqslant}

\newcommand{\p}{\partial}

\begin{document}

\markboth{BEN COX, XIANGQIAN GUO, RENCAI LU, KAIMING ZHAO}
{$N$-point Virasoro algebras}

%
\catchline{}{}{}{}{}
%

\title{$N$-POINT VIRASORO ALGEBRAS AND THEIR MODULES OF DENSITIES}

\author{BEN COX}

\address{Department of Mathematics,  College of
Charleston,  \\
Charleston, SC 29424, USA.  \\
 coxbl@cofc.edu}

\author{XIANGQIAN GUO}

\address{Department of Mathematics,  Zhengzhou
University, \\
 Zhengzhou 450001, Henan, P. R. China. \\
guoxq@zzu.edu.cn
}
\author{RENCAI LU}

\address{Department of Mathematics, Suzhou University,   \\
Suzhou 215006, Jiangsu, P. R. China.  \\
rencail@amss.ac.cn}

\author{KAIMING ZHAO}

\address{Department of Mathematics, Wilfrid
Laurier University, \\  Waterloo, ON, Canada N2L 3C5,  \\ 
and College of
Mathematics and Information Science, Hebei Normal (Teachers) \\
University, Shijiazhuang, Hebei, 050016 P. R. China. \\
kzhao@wlu.ca
}

\maketitle

\begin{history}
\received{(Day Month Year)}
\revised{(Day Month Year)}
\end{history}

\begin{abstract}
 In this paper we introduce and study $n$-point Virasoro algebras, $\tilde{\W_a}$,
 which are natural generalizations
 of the classical Virasoro algebra and have as quotients multipoint genus zero Krichever-Novikov type algebras. We determine necessary and sufficient conditions for the latter two such Lie algebras
 to be isomorphic.  Moreover we determine their automorphisms, their derivation algebras, their universal
 central extensions,  and some other
 properties.   The  list of automorphism groups that occur is $C_n$, $D_n$, $A_4$, $S_4$ and $A_5$. We also construct a large class of modules which we call modules of densities, and
 determine necessary and sufficient conditions for them to be irreducible.
\end{abstract}

\keywords{Virasoro Algebras, Modules of Densities, Automorphism Groups, Universal Central Extensions, Krichever-Novikov Algerbras, $n$-point Algebras, Klein Groups}

\ccode{Mathematics Subject Classification 2000: 17B68, 81R10, 19C09  }

\section{Introduction}	
We assume in this paper that  $\mathbb N=\{1,2,\dots\}$ and $\mathbb Z_+=\{0,1,2,3,\dots\}$, and all vector spaces and algebras are over the complex numbers $\C$.

Consider the Laurent polynomial ring $\C[t, t^{-1}]$ as the ring
of rational functions on the Riemann sphere $\C \cup \{\infty\}$
with poles allowed only in $\{\infty, 0\}$. This geometric point of
view suggests a natural generalization of the loop algebra
construction. Instead of the sphere with two punctures, one can
consider any complex algebraic curve $X$ of genus $g$ with a fixed
subset $P$ of $n$ distinct points. Following this idea one arrives at M. Schlichenmaier's definition of multipoint algebras of Krichever-Novikov affine type if we replace $\C[t, t^{-1}]$ with the ring $R$ of
meromorphic functions on $X$ with poles allowed only in $P$ in the
construction of affine Kac-Moody algebras (see \cite{MR2058804},  \cite{MR902293}, \cite{MR925072}, and \cite{MR998426}). The $n$-point affine Lie algebras also appeared in the
work of Kazhdan and Lusztig (\cite[Sections 4 \& 7]{MR1104840},\cite[Chapter 12]{MR1849359}).  Krichever-Novikov algebras are used to constuct analogues of important mathematical objects used in string theory but in the setting of a Riemann surface of arbitrary genus: vacuum vectors, the stress-energy tensor,  normal ordering,  and operator product expansion formulae. Moreover Wess-Zumino-Witten-Novikov theory and analogues of the Knizhnik-Zamolodchikov equations are developed for analogues of the affine and Virasoro algebras (see the survey article \cite{MR2152962}, and for example  \cite{MR1706819}, \cite{MR2106647}, \cite{MR2072650}, \cite{MR2058804}, \cite{MR1989644}, and \cite{MR1666274}).
For a $q$-analogue of the Virasoro algebra see \cite{MR1280096}.

In this paper we study the algebra of derivations
of $R$ where $R$ is the ring of rational functions on $\mathbb P^1$ with poles allowed only at the fixed finite number of points
in $P$. Below we explicitly describe the universal central extension of this algebra which is a good algebraic candidate for the $n$-point Virasoro algebra (see also \cite{MR2035385}).

Let $n\in\N$.  Let $a_1, a_2, \cdots,
a_n\in\C$ be $n$ distinct  numbers, and let $\C(t)$ be the field of rational functions in
the indeterminate $t$. We set $a=(a_1, a_2, \cdots, a_n)\in\C^n$.
Let $R_a$ be the subalgebra of $\C(t)$ generated by $t,
(t-a_1)^{-1}, \cdots, (t-a_n)^{-1}$, and $\mathscr{V}_a=\Der(R_a)$. The universal central extension of the Lie algebra $\mathscr{V}_a$ we will call the {\it $(n+1)$-point
Virasoro algebra} of $\mathscr{V}_a=\Der(R_a)$ and we will denote it by $\tilde{\mathscr{V}_a}$. Perhaps one could denote $\Der(R_a)$ by $\mathscr{W}_a$, but this would likely cause a conflict of notation with $W$-algebras studied elsewhere.  When $n=1$ and $a_1=0$, $R_a$ is the ring of rational functions with poles at the two points $0$ and $\infty$ which is why we call the classical Virasoro algebra, a $2$-point Virasoro algebra. 

The present paper is organized as follows. In Section 2, we obtain
some basic properties of the associative algebra $R_a$ and the Lie
algebra $\W_a$; determine the group of all invertible elements of
$R_a$, give a useful basis of $R_a$ and $\W_a$ and explicitly
describe Lie brackets of $\W_a$  using its basis elements. The main
technique we use here is residue calculus from complex analysis. The Lie
brackets of $\W_a$ can give a lot of nontrivial combinatorial
identities, and we obtain one for later use. In Section 3, we obtain
the necessary and sufficient conditions for two Lie algebras $\W_a$
and $\W_{a'}$ to be isomorphic. The automorphism group $\text{Aut}\W_a$ is isomorphic to the automorphism group of the ring $R_a$ and it is the latter that we first explicitly describe.  In this regards one  should recall that in the classical affine setting $\text{Aut}\mathbb C[\![t]\!]$ gives by providing transformation rules, global geometric meaning to vertex operators on algebraic curves (see \cite[Chapter 5]{MR1849359}).  There are actually two distinct classes of isomorphisms in $\text{Aut}\,R_a$. We also determine the
isomorphism classes of all algebras $\W_a$ and describe the
automorphism group of any $\W_a$. Various examples are presented which
illustrate these results.   Using a result due to F. Klein \cite{MR0080930}, we describe the possible list of automorphism groups as that consisting of the cyclic group $C_N$, the dihedral group $D_N$, the alternating group $A_4$ or $A_5$, and the symmetric group $S_4$.  We give examples of $a$ showing that each of these groups occurs.  Rather mysteriously (at least to us) these are the same groups that appear elsewhere in Lie theory (e.g. in the McKay correspondence \cite{MR604577} and in conformal field theory \cite{MR918402}).  The second cohomology group of $\W_a$ is
computed in Section 4, and is shown to be $n$-dimensional.
Consequently, we can obtain the $n$-dimensional universal central
extension of $\W_a$ which we will call the $n$-point Virasoro
algebra. (One can deduce some combinatorial identities when
calculating the $2$-cocycles, which may be interesting to
combinatorists.) In the last section, we construct a large
class of modules $V(\a,\beta):=R_az$ over $\W_a$ parameterized by
$\a\in\Z^n$ and $\beta\in\C$. We determine necessary and sufficient
conditions for these modules to be irreducible. It is shown that the modules
 are irreducible except when (1) $\beta=0$ and $\a\in\Z^n$; or (2)
$\beta=1, n\geq 2$; or (3) $\beta=1$, $n=1$ and $\a\in\Z$. More precisely,
if $\beta=0$ and $\a\in\Z^n$, then $V(\a,\beta)$ has a nontrivial
irreducible quotient module and the corresponding submodule is
$1$-dimensional. Let $\partial=d/dt$.  If $\beta=1$, then $V(\a,\beta)$ has a smallest nonzero
submodule
$$
\p(R_az)=\{\p(gz):=\p(g)z+g\sum_{i=1}^n\frac{\a_i}{t-a_i}z\,|\,g\in R_a\},
$$
which is irreducible; $V(\a,\beta)/\p(R_az)$ is a trivial module and, is
$0$ if and only if $n=1$ and $\a\not\in \mathbb Z$. In addition if $\a\in
\Z^n$, then
$$\p(R_az)=\sum_{k\in\Z_+}\C t^kz\oplus\sum_{i=1}^n\sum_{k\in\N}\C¡¡(t-a_i)^{-k-1}z,$$
and $\dim(R_az/\p(R_az))=n$.

Since $n$-point Virasoro algebras have many properties
similar to the classical Virasoro algebra, we hope that they will have
applications to physics and to other areas of mathematics, just as in the case of the classical Virasoro algebra.

In some related work of the first author and V. Futorny \cite{CF1} a description is given for the generators and relations of the universal central extension of the infinite dimensional Lie algebra, $\mathfrak{sl}_2(\mathbb C)\otimes \mathbb C[t,t^{-1},u|u^2=(t^2-b^2)(t^2-c^2)]$, appearing in the work of Date, Jimbo, Kashiwara and Miwa in their study of integrable systems arising from the Landau-Lifshitz differential equation.    Here the universal central extension is described in terms of elliptic integrals and polynomials related to certain ultraspherical polynomials.
We are currently investigating the structure of the universal central extension and the automorphism group of the Lie algebra of differeomorphisms of such coordinate rings as  $\mathbb C[t,t^{-1},u|u^2=(t^2-b^2)(t^2-c^2)]$.  Again elliptic integrals and orthogonal polynomials make their appearance in this description.  This work will appear in a later publication.

Finally, the reader should note that since $\Der(R_a)=R_a\partial$, a result of Jordan \cite{MR829385} shows that
 $\Der(R_a)$ provides an interesting family of infinite-dimensional simple Lie algebras.

\section{Properties of $\mathscr{V}_a$}
We first recall some notation from \cite{MR966871}. Let $A$ be any unital
commutative associative algebra and let $\Der(A)$ is the Lie algebra of
all derivations of $A$. A Lie subalgebra $L$ of $\Der(A)$ is called
{\bf regular} if it is also an $A$-module.

Now we return to $\W_a$ for some $a=(a_1,\cdots,a_n)\in\C^n$ with
$a_i$ distinct. It is easy to see that $\mathscr{V}_a=R_a\partial$
which is a regular Lie algebra. Let $R_a^*$ be the unit group of
$R_a$, that is, $R_a^*$ consists of all invertible elements in
$R_a$. Similarly we define $\mathbb C^*$. 
We collect some properties of $\W_a$ as follows, which will be
repeatedly used later.

\begin{lemma}\label{lemma1} Let $R_a$ and $\W_a$ be as above. Then
\begin{enumerate}
\item[(a).] $R_a$ has a basis
$\{t^k,(t-a_1)^{-l}, \cdots, (t-a_n)^{-l} \,|\,k\in\Z_+, l\in\N\};$
\item[(b).] $\mathscr{V}_a$ has
a basis $\{t^k\partial,(t-a_1)^{-l}\partial, \cdots,
(t-a_n)^{-l}\partial \,|\,k\in\Z_+, l\in\N\};$
\item[(c).] $R^*_a=\{c\prod_{i=1}^n(t-a_i)^{k_i}\,|\, c\in\C^*, k_i\in\Z\}$. \end{enumerate}
\end{lemma}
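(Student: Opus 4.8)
The plan is to prove the three parts essentially in order, using partial fractions as the main tool. For part (a), I would start from the generating set $\{t, (t-a_1)^{-1}, \dots, (t-a_n)^{-1}\}$ of $R_a$ and observe that any element of $R_a$ is a polynomial in these generators. A monomial in the generators is, up to the positive powers of $t$ which contribute the $t^k$ with $k \in \Z_+$, a product of powers $(t-a_i)^{-m_i}$ with $m_i \geq 0$. Using partial fraction decomposition in $\C(t)$, any such product (more generally any rational function whose only poles are among $a_1, \dots, a_n$, all of them of finite order, with no pole at $\infty$) can be written uniquely as a $\C$-linear combination of $t^k$ ($k \in \Z_+$) and $(t-a_i)^{-l}$ ($1 \le i \le n$, $l \in \N$); note there is no cross term because distinct $a_i$ give distinct poles, and a pole at $a_i$ of order $l$ forces exactly the basis elements $(t-a_i)^{-1}, \dots, (t-a_i)^{-l}$. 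So the spanning claim reduces to checking that $R_a$ is spanned by products of the generators with no further $t$-denominators appearing — i.e. that $R_a$, as a subalgebra of $\C(t)$, consists exactly of rational functions regular away from $\{a_1, \dots, a_n\}$ and at worst polynomially growing at $\infty$ — and this is immediate since that set of functions is a subalgebra containing all the generators and contained in $R_a$. Linear independence of the proposed basis is the standard linear independence of partial fractions, which follows by examining orders of poles at each $a_i$ and the degree at $\infty$.

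Part (b) is then a formal consequence of part (a) together with the already-noted identity $\mathscr{V}_a = R_a \partial$: since $R_a \partial$ is a free rank-one $R_a$-module on the generator $\partial$ (as $\partial$ is a nonzero derivation of the domain $R_a$, multiplication by $\partial$ is injective), any basis of $R_a$ yields a basis of $R_a \partial$ by multiplying on the right by $\partial$. So I would just invoke (a).

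For part (c), one inclusion is trivial: each $c \prod_{i=1}^n (t-a_i)^{k_i}$ with $c \in \C^*$, $k_i \in \Z$, lies in $R_a$, and its inverse $c^{-1} \prod_{i=1}^n (t-a_i)^{-k_i}$ also lies in $R_a$, so it is a unit. For the reverse inclusion, suppose $f \in R_a^*$, so both $f$ and $f^{-1}$ lie in $R_a$. By part (a), an element of $R_a$, viewed in $\C(t)$, is a rational function with no poles outside $\{a_1, \dots, a_n\}$; since $f^{-1} \in R_a$ too, $f$ also has no \emph{zeros} outside $\{a_1, \dots, a_n\}$, and moreover $f$ has no pole or zero at $\infty$ except that we must track the behavior there. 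Writing $f = c\, t^{m} \prod_{i=1}^n (t-a_i)^{k_i} \cdot u$ where $u$ is a rational function with no zeros or poles in $\C \cup \{\infty\}$, forces $u$ to be a nonzero constant; then the condition that neither $f$ nor $f^{-1}$ has a pole at $\infty$, together with the fact that $t = (t-a_1) + a_1$ etc. shows the factor $t^m$ is not independently allowed unless $0 \in \{a_i\}$ — more cleanly, one notes $f$ has divisor supported on $\{a_1, \dots, a_n, \infty\}$ with the total degree zero, and membership of both $f, f^{-1}$ in $R_a$ kills any pole at $\infty$, so the divisor is supported on $\{a_1, \dots, a_n\}$ with degree zero automatically, giving $f = c \prod_{i=1}^n (t-a_i)^{k_i}$.

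I expect the main obstacle to be the bookkeeping in part (c): making precise, without circularity, the claim that an element of $R_a$ which is also a unit can have no zeros in $\C \setminus \{a_1,\dots,a_n\}$ and behaves correctly at $\infty$. The cleanest route is to use the divisor/valuation language on $\mathbb{P}^1$: part (a) says $R_a = \{ f \in \C(t) : v_p(f) \geq 0 \text{ for all } p \notin \{a_1, \dots, a_n\}\}$ where $v_p$ is the order of vanishing at $p$ (including $p = \infty$); then $f \in R_a^*$ iff $v_p(f) = 0$ for all $p \notin \{a_1, \dots, a_n\}$, and since $\sum_p v_p(f) = 0$ for any $f \in \C(t)^*$ and $\infty \notin \{a_1,\dots,a_n\}$, the divisor of $f$ is $\sum_i k_i [a_i]$ with $k_i = v_{a_i}(f) \in \Z$; two rational functions with the same divisor differ by a constant, and $\prod (t-a_i)^{k_i}$ has exactly this divisor, so $f = c \prod_i (t-a_i)^{k_i}$. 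Everything else is routine.
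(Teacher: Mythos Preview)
Your arguments for (a) and (b) are correct and match the paper's in spirit: the paper simply cites the Chinese Remainder Theorem for (a), which is of course the algebraic incarnation of the partial-fraction decomposition you spell out, and for (b) it uses exactly your observation that $\W_a=R_a\partial$ with $\partial$ acting injectively (phrased there as $(R_a\partial)(t)=R_a$).

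Your part (c), however, contains a genuine error. You repeatedly assert that elements of $R_a$ have no pole at $\infty$ --- e.g.\ ``membership of both $f,f^{-1}$ in $R_a$ kills any pole at $\infty$'' and the characterization $R_a=\{f:v_p(f)\ge 0\text{ for all }p\notin\{a_1,\dots,a_n\}\}$ with $p=\infty$ included. This is false: $t-a_1\in R_a^*$ has a simple pole at $\infty$, and indeed any $\prod_i(t-a_i)^{k_i}$ with $\sum k_i\neq 0$ does too. The correct characterization from (a) is that elements of $R_a$ are regular away from $\{a_1,\dots,a_n,\infty\}$. Your divisor argument is then easily salvaged: $f\in R_a^*$ forces $\operatorname{div}(f)$ to be supported on $\{a_1,\dots,a_n,\infty\}$; writing $k_i=v_{a_i}(f)$, the degree-zero condition gives $v_\infty(f)=-\sum_i k_i$, and since $\prod_i(t-a_i)^{k_i}$ has exactly this divisor, $f=c\prod_i(t-a_i)^{k_i}$.

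For comparison, the paper's proof of (c) is more elementary and avoids the $\infty$ issue entirely: write $x=f/g$ with $f,g\in\C[t]$ coprime; membership $x\in R_a$ forces $g=\prod_i(t-a_i)^{k_i}$, and then $x^{-1}=g/f\in R_a$ forces the same shape on $f$. Your divisor approach is perfectly viable once corrected, and arguably more conceptual, but the paper's two-line polynomial argument sidesteps any need to track behaviour at $\infty$.
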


\begin{proof}(a) follows from the Chinese Remainder Theorem.  (b) follows from (a) and the fact that
$\W_a=R_a\partial$ and $(R_a\partial)(t)=R_a$.

Now we prove (c). For any $x\in R_a^*$, we can write $x$ as
$x=\frac{f(t)}{g(t)},$ where $f(t)$ and $g(t)$ are relatively prime
polynomials in $t$, and that $g(t)=(t-a_1)^{k_{1}}\cdots
(t-a_n)^{k_{n}},$ for some $k_{i}\in\Z_+$.

Since $1/x=g/f\in R_a^*$, similar arguments infer that $f$ is also
of the form $f=c(t-a_1)^{k'_{1}}\cdots(t-a_n)^{k'_{n}}$ for some
$k'_{i}\in\Z_+$ and $c\in\C^*$.  This completes the proof.
\end{proof}

Now we determine the brackets of basis elements of  $\mathscr{V}_a$.
We see that  $\mathscr{V}_a$ has subalgebras isomorphic to the
centerless Virasoro algebra:
$$\Vir^{(i)}=\Span\{(t-a_i)^{k+1}\partial\,|\,k\in\Z\},$$
for any $i=1,2,\cdots ,n$. Note that the positive part
$\Span\{(t-a_i)^{k}\partial\,|\,k\in\Z_+\}$ of $\Vir^{(i)}$ is a
subalgebra of $\mathscr{V}_a$ which is independent of $i$. The Lie
bracket of $\W_a$ is defined as follows
\begin{equation}\label{definingbracket}
[f(t)\p, g(t)\p]=(f(t)\p(g(t))-g(t)\p(f(t)))\p,\,\,\forall\, f(t),g(t)\in R_a.
\end{equation}
For the convenience of later use, we write the brackets in terms of
basis elements of $\W_a$:

\begin{theorem}\label{algiso} For any $k,l\in\N$, $m\in\Z$ and $i\ne j$, we have

\begin{enumerate}
\item[(a).] \hspace{0.1cm}$[(t-a_i)^{k}\partial, (t-a_j)^{m}\partial]$
$$=\sum_{s=0}^{k}{{k}\choose{s}}(m+s-k)(a_j-a_i)^s(t-a_j)^{k+m-s-1}\p;\,\,\,\,\,\,\,\,\,\,\,\,\,\,\,\,\,$$

\item[(b).]
\begin{align*}
&[(t-a_i)^{-k}\partial, (t-a_j)^{-l}\partial] \\
&=\sum_{m=1}^{k+1}(2k+1-m){k+l-m\choose k+1-m}\frac{(t-a_i)^{-m}}{(a_i-a_j)^l(a_j-a_i)^{k+1-m}}\p \\
&\quad-\sum_{m=1}^{l+1}(2l+1-m){l+k-m\choose l+1-m}\frac{(t-a_j)^{-m}}{(a_j-a_i)^k(a_i-a_j)^{l+1-m}}\p.
\end{align*}
\end{enumerate}

\end{theorem}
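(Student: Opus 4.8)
The plan is to compute the bracket $[(t-a_i)^{m}\partial,(t-a_j)^{k}\partial]$ directly from the defining formula \eqnref{definingbracket}, and then extract parts (a) and (b) as the two cases $m\geq 0$ (with $m$ renamed) and $m=-l$. Writing $u=t-a_j$ and $c=a_j-a_i$, so that $t-a_i=u+c$, the bracket becomes
\[
[(u+c)^{m}\partial,u^{k}\partial]=\bigl((u+c)^{m}\,k\,u^{k-1}-u^{k}\,m(u+c)^{m-1}\bigr)\partial
=(u+c)^{m-1}u^{k-1}\bigl(k(u+c)-mu\bigr)\partial.
\]
For part (a), with $m=k'$ a positive integer, one expands $(u+c)^{k'-1}$ by the binomial theorem, multiplies through by $u^{k-1}(k(u+c)-k'u)=u^{k-1}((k-k')u+kc)$, and collects powers of $u=t-a_j$; after a short reindexing this matches the stated sum $\sum_{s}\binom{k}{s}(m+s-k)(a_j-a_i)^s(t-a_j)^{k+m-s-1}\partial$ (with the roles of $k,m$ as in the theorem, and noting the symmetry under $i\leftrightarrow j$ accounts for which variable is expanded). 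This step is routine bookkeeping.

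For part (b), the substance is the expansion of $(t-a_i)^{-k}=(u+c)^{-k}$ as a series — but of course this must be done as a \emph{rational function identity}, not a formal power series, since $R_a$ has no completion. The clean route is \emph{partial fractions}: the product $(t-a_i)^{-k-1}(t-a_j)^{-l-1}$, or rather the two derivative terms $k(t-a_i)^{-k-1}(t-a_j)^{-l}$ and $l(t-a_i)^{-k}(t-a_j)^{-l-1}$ appearing in $[(t-a_i)^{-k}\partial,(t-a_j)^{-l}\partial]=\bigl(l(t-a_i)^{-k}(t-a_j)^{-l-1}-k(t-a_i)^{-k-1}(t-a_j)^{-l}\bigr)\partial$, each has a partial fraction decomposition $\sum_{m=1}^{k+1}A_m(t-a_i)^{-m}+\sum_{m=1}^{l+1}B_m(t-a_j)^{-m}$ (after a convenient normalization of exponents). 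The coefficients $A_m$ are resid-type quantities: $A_m$ is $1/(k+1-m)!$ times the $(k+1-m)$-th derivative of $(t-a_j)^{-l}$ (resp. of the appropriate factor) evaluated at $t=a_i$, which produces exactly the binomial $\binom{k+l-m}{k+1-m}$ and the power $(a_i-a_j)^{-l-(k+1-m)}$ up to sign; similarly for $B_m$ at $t=a_j$. Combining the two terms and simplifying the numerical coefficient $k\cdot(\text{something})+l\cdot(\text{something})$ into $(2k+1-m)$ (resp. $(2l+1-m)$) is the one genuinely delicate simplification.

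The \textbf{main obstacle} is precisely that last coefficient consolidation in (b): after partial-fractioning both $k(t-a_i)^{-k-1}(t-a_j)^{-l}$ and $l(t-a_i)^{-k}(t-a_j)^{-l-1}$ one gets, for each fixed $m$, a linear combination of two binomial coefficients $\binom{k+l-m}{k+1-m}$ and $\binom{k+l-m}{k+2-m}$ (or shifted versions), and one must use the Pascal-type identity together with the arithmetic $k\binom{k+l-m}{k+1-m}+(\text{shift})=\,(2k+1-m)\binom{k+l-m}{k+1-m}$ to land on the stated closed form; getting the signs $(a_i-a_j)$ versus $(a_j-a_i)$ right in each denominator is the accompanying nuisance. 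I would organize this by first proving a single lemma on the partial-fraction coefficients of $(t-a_i)^{-p}(t-a_j)^{-q}$ — essentially that the coefficient of $(t-a_i)^{-m}$ is $(-1)^{?}\binom{p+q-m-1}{p-m}(a_i-a_j)^{m-p-q}$ — and then apply it twice and add. As a sanity check one can verify the $k=l=1$ case against part (a) (the brackets $[(t-a_i)^{-1}\partial,(t-a_j)^{-1}\partial]$), and confirm antisymmetry under $(i,k)\leftrightarrow(j,l)$, which the final formula manifestly exhibits.
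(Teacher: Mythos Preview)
Your approach is essentially the same as the paper's: binomial expansion for (a), and for (b) a partial-fraction lemma for $(t-a_i)^{-p}(t-a_j)^{-q}$ with coefficients computed as Taylor/residue derivatives at the poles, then applied to the two terms of the bracket and consolidated. The only slip is a global sign: the bracket is $k(t-a_i)^{-k-1}(t-a_j)^{-l}\partial - l(t-a_i)^{-k}(t-a_j)^{-l-1}\partial$, the negative of what you wrote, but this does not affect the method.
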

One might want to compare the above with \cite[Section 8]{MR1039524}.
\begin{proof}
(a) is a direct consequence of Newton's binomial theorem:
\begin{align*}
[(t-a_i)^{k}&\partial,
(t-a_j)^{m}\partial] \\
&=(m(t-a_j)^{m-1}(t-a_i)^k-k(t-a_i)^{k-1}(t-a_j)^{m})\p \\
&=m\sum_{s=0}^k{{k}\choose{s}}(a_j-a_i)^s(t-a_j)^{k-s+m-1}\p \\
&
\hskip 2cm
-k\sum_{s=0}^{k-1}{{k-1}\choose{s}}(a_j-a_i)^s(t-a_j)^{k-s+m-1})\p \\
&=\sum_{s=0}^{k}{{k}\choose{s}}(m+s-k)(a_j-a_i)^s(t-a_j)^{k+m-s-1}\p.
\end{align*}

Now we prove (b). For any $b\neq c\in\C$, we know that
$$(t-c)^{-k}(t-b)^{-l}=\sum_{i=1}^kc_i(t-c)^{-i}+\sum_{j=1}^lb_j(t-b)^{-j},$$
for some complex numbers $c_i, b_j$.

Let $\gamma$ be a circle in the complex plane about $c$ with $b$
lying outside $\gamma$. For any $1\leq m\leq k$, consider the
following integral
$$\oint_\gamma(t-c)^{m-k-1}(t-b)^{-l}dt=\oint_\gamma(t-c)^{m-1}(\sum_{i=1}^kc_i(t-c)^{-i}+\sum_{j=1}^lb_j(t-b)^{-j})dt,$$
which gives that
$$
{{-l}\choose{k-m}}(c-b)^{m-l-k} =\frac{1}{(k-m)!}\p^{(k-m)}((t-b)^{-l})|_{t=c}= c_{m}.$$
A similar calculation gives
$$
b_{m}={{-k}\choose{l-m}}(b-c)^{m-k-l},\,\,\forall\, 1\leq m\leq l.$$
Thus
\begin{align*}
(t-c)^{-k}(t-b)^{-l} =\sum_{m=1}^k{{-l}\choose{k-m}}\frac{(c-b)^{m-l-k}}{(t-c)^{m}}+\sum_{m=1}^l{{-k}\choose{l-m}}\frac{(b-c)^{m-k-l}}{(t-b)^{m}}.
\end{align*}
Substituting it into the following Lie bracket, we  deduce
\begin{align*}
[(t-a_i)^{-k}&\partial,(t-a_j)^{-l}\partial] \\
&=k(t-a_i)^{-k-1}(t-a_j)^{-l}\p-l(t-a_i)^{-k}(t-a_j)^{-l-1}\p \\
&=\sum_{m=1}^{k+1}(2k+1-m){k+l-m\choose k+1-m}\frac{(t-a_i)^{-m}}{(a_i-a_j)^l(a_j-a_i)^{k+1-m}} \\
&\enspace-\sum_{m=1}^{l+1}(2l+1-m){l+k-m\choose l+1-m}\frac{(t-a_j)^{-m}}{(a_j-a_i)^k(a_i-a_j)^{l+1-m}}.
\end{align*}
Note that the above formula is also true for $k=0$ or $l=0$ if we
make the convention that ${{m-1}\choose{m}}=\delta_{m,0}$. This
completes the proof of (b).
%
%
\end{proof}

From the above Lie bracket, we can deduce the following combinatorial
identity, which could be used in Section 4 to verify that the $\phi_i$ in that section are $2$-cocycles, but we will follow another route to the proof of this fact.

\begin{corollary}\label{combinatorialid} Suppose that $x,y,z\in\C$ are distinct and
$m,k,l\in\N$, then for any $1\leq r\leq m+1$ we have the following
identity
$$\sum_{s=1}^{k+1}{k+l-s\choose k+1-s}{m+s-r\choose m+1-r}\frac{(-1)^{l+s}(2k+1-s)(2m+1-r)}{(z-y)^{l+k+1-s}(y-x)^{m+s+1-r}}$$
$$\hspace{0.5cm}-\sum_{s=1}^{l+1}{l+k-s\choose l+1-s}{m+s-r\choose m+1-r}\frac{(-1)^{k+s}(2l+1-s)(2m+1-r)}{(y-z)^{l+k+1-s}(z-x)^{m+s+1-r}}$$
$$=\sum_{s=r-1}^{m+1}{m+k-s\choose m+1-s}{s+l-r\choose s+1-r}\frac{(-1)^{k+l}(2m+1-s)(2s+1-r)}{(y-x)^{k+m+1-s}(z-x)^{l+s+1-r}}$$
$$\hspace{0.5cm}-\sum_{s=r-1}^{m+1}{m+l-s\choose m+1-s}{s+k-r\choose s+1-r} \frac{(-1)^{k+l}(2m+1-s)(2s+1-r)}{(z-x)^{l+m+1-s}(y-x)^{k+s+1-r}}.$$
\end{corollary}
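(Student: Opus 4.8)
The plan is to recognize the asserted identity as a transcription, in the basis of \lemref{lemma1}(b), of the Jacobi identity in $\W_a$. Concretely, take $a=(x,y,z)\in\C^3$ (its entries are distinct by hypothesis) and apply
$$[[A,B],C]+[[B,C],A]+[[C,A],B]=0$$
to the three basis vectors $A=(t-y)^{-k}\p$, $B=(t-z)^{-l}\p$, $C=(t-x)^{-m}\p$ of $\W_a$. The claim is that, after expanding every iterated bracket by \thmref{algiso}(b) applied twice, the coefficient of a single basis vector $(t-x)^{-r}\p$ (for any fixed $r$ with $1\leq r\leq m+1$) in this relation is precisely the displayed identity.

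First I would expand $[A,B]$ by \thmref{algiso}(b): this writes $[A,B]$ as a $\C$-linear combination of the vectors $(t-y)^{-s}\p$ with $1\leq s\leq k+1$ and $(t-z)^{-s}\p$ with $1\leq s\leq l+1$, with explicit coefficients built from ${k+l-s\choose k+1-s}$, $2k+1-s$ and $(y-z)^{-l}(z-y)^{-(k+1-s)}$ (respectively their $k\leftrightarrow l$, $y\leftrightarrow z$ analogues). Then I would bracket this with $C$, using \thmref{algiso}(b) once more on each $[(t-y)^{-s}\p,(t-x)^{-m}\p]$ and $[(t-z)^{-s}\p,(t-x)^{-m}\p]$, and retain only the $(t-x)^{-r}\p$--component; rewriting $(y-z)^l(z-y)^{k+1-s}=(-1)^l(z-y)^{l+k+1-s}$ and $(x-y)^s(y-x)^{m+1-r}=(-1)^s(y-x)^{m+s+1-r}$ (and similarly for the $z$--terms) turns this component into the difference of the two sums on the left-hand side of the Corollary. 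In exactly the same way, the $(t-x)^{-r}\p$--component of $[[C,A],B]$ reproduces the first sum on the right-hand side and that of $[[B,C],A]$ reproduces (with the opposite sign) the second sum; substituting into the Jacobi identity and clearing signs yields the claimed equality. The shift in summation range on the right-hand side is forced: in $[[C,A],B]$ the bracket $[(t-x)^{-m}\p,(t-y)^{-k}\p]$ only produces intermediate terms $(t-x)^{-s}\p$ for $1\leq s\leq m+1$, while $[(t-x)^{-s}\p,(t-z)^{-l}\p]$ contributes to $(t-x)^{-r}\p$ only for $r\leq s+1$, so $s$ ranges over $\max(1,r-1)\leq s\leq m+1$, which agrees with $\sum_{s=r-1}^{m+1}$ since the $s=0$ term (occurring only when $r=1$) carries the vanishing factor $2s+1-r$.

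The only real difficulty is the bookkeeping: keeping track of the powers of $-1$ introduced each time one rewrites a difference $(a_j-a_i)$ as $(a_i-a_j)$, and matching the various binomial conventions of \thmref{algiso}(b) (for instance ${-l\choose k-m}$ versus ${k+l-m\choose k+1-m}$, and the convention ${m-1\choose m}=\delta_{m,0}$ recorded after its proof). That one may legitimately isolate the coefficient of $(t-x)^{-r}\p$ alone is clear, since the coefficients of the other basis vectors $(t-y)^{-r}\p$ and $(t-z)^{-r}\p$ vanish separately in the Jacobi sum (and, by the symmetry $x\leftrightarrow y\leftrightarrow z$, give essentially the same identity). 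As an alternative one could prove the identity head-on by residue calculus and partial fractions, as in the proof of \thmref{algiso}(b), but passing through the Jacobi identity is considerably shorter.
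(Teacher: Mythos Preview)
Your proposal is correct and follows exactly the same approach as the paper: set $a=(x,y,z)$, apply the Jacobi identity in $\W_a$ to $(t-x)^{-m}\p$, $(t-y)^{-k}\p$, $(t-z)^{-l}\p$, expand the iterated brackets via \thmref{algiso}(b), and compare the coefficient of $(t-x)^{-r}\p$. The paper omits all of the bookkeeping you supply (the sign rewritings and the treatment of the lower summation index $s=r-1$), so your write-up is in fact more detailed than the original.
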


\begin{proof} Denote $a=(x,y,z)\in\C$ and consider the Lie algebra
$\W_a$, then $(t-x)^{-m}, (t-y)^{-k}, (t-z)^{-l}$ are linearly
independent over $\C$. Fix some $m,k,l\in\N$. Now consider the
Jacobian identity
$$[(t-x)^{-m}\p, [(t-y)^{-k}\p,
(t-z)^{-l}\p]]$$
$$=[[(t-x)^{-m}\p, (t-y)^{-k}\p], (t-z)^{-l}\p]+[(t-y)^{-k}\p, [(t-x)^{-m}\p, (t-z)^{-l}\p]].$$
Expanding all the brackets above using \thmref{algiso} (b),
and comparing the coefficients of $(t-x)^{-r}$ for any $1\leq r\leq
m+1$, we get the identity in the lemma. We omit the details.
\end{proof}

%

\section{Isomorphisms, automorphisms and Derivations}

We first determine the conditions for two different $n$-point
Virasoro algebras to be isomorphic.
\begin{theorem}\label{ringisothm}
Suppose $\{a_1, a_2, \cdots, a_n\}$ and $\{a'_1, a'_2, \cdots,
a'_m\}$ are two sets of distinct complex numbers and $\phi: R_a\to
R_{a'}$ is an isomorphism of associative algebras. Then $m=n$, $\phi$ is a fractional linear transformation, and
one of the following two cases holds
\begin{enumerate}
\item[(a).] There exists some constant $c\in\C$ such that
$a_i-a_1=c(a'_i-a'_1)$ and $\phi((t-a_i)^{k} )=c^{k}(t-a'_i)^{k} $
for all $k\in\Z$ and $i=1,2,\cdots ,n$ after reordering $a'_i$ if
necessary.
\item[(b).] There exists some constant $c\in\C$ such that
$(a_i-a_1)(a'_i-a'_1)=c$ for all $i\ne1$, and
$$\phi((t-a_1)^k)=\frac{c^{k}}{(t-a'_1)^{k}},$$
$$\phi((t-a_i)^k)=\frac{(a_1-a_i)^k(t-a'_i)^k}{(t-a'_1)^{k}}, \,\,\forall\,\,i>1,$$
for all $k\in\Z$ after reordering $a_i$ and $a'_i$ if necessary.
\end{enumerate}
Moreover, the maps given in (a) and (b) indeed define algebra
isomorphisms between $R_a$ and $R_{a'}$ under the corresponding
conditions.
\end{theorem}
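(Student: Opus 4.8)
The plan is to pass from the algebra isomorphism $\phi\colon R_a\to R_{a'}$ to a statement about the underlying varieties. The ring $R_a$ is the ring of regular functions on $\mathbb P^1\setminus\{a_1,\dots,a_n,\infty\}$, so $\operatorname{Spec}R_a$ is this open subset of $\mathbb P^1$ (as an affine variety), and likewise for $R_{a'}$. First I would argue that $\phi$ extends to a field isomorphism $\C(t)\to\C(t)$: indeed $R_a$ and $R_{a'}$ have the same fraction field $\C(t)$ since each contains $t$ and inverts a nonconstant polynomial, so $\phi$ must send a generating transcendental ($t$) to a generator of $\C(t)$ over $\C$, i.e.\ to $(\alpha t+\beta)/(\gamma t+\delta)$, a fractional linear transformation. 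This gives the ``$\phi$ is a fractional linear transformation'' assertion for free.

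Next I would pin down which fractional linear transformations $\psi$ actually carry $R_a$ isomorphically onto $R_{a'}$. A fractional linear map $\psi$ is an automorphism of $\mathbb P^1$; it sends $R_a=\mathcal O(\mathbb P^1\setminus\{a_1,\dots,a_n,\infty\})$ onto $R_{a'}$ precisely when it maps the set of ``forbidden points'' $\{a_1,\dots,a_n,\infty\}$ bijectively onto $\{a'_1,\dots,a'_m,\infty\}$. Comparing cardinalities gives $m=n$. Now split into two cases according to where $\infty$ goes. Case (a): $\psi(\infty)=\infty$. Then $\psi$ is affine, $\psi(t)=ct+d$ for some $c\in\C^*$, and it must permute $\{a_1,\dots,a_n\}$ with $\{a'_1,\dots,a'_n\}$; after relabelling, $\psi(a'_i)=a_i$, equivalently $a_i=ca'_i+d$, so $a_i-a_1=c(a'_i-a'_1)$, and $\phi((t-a_i)^k)=(\psi(t)-a_i)^{k}$. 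Wait --- one must be careful about the direction: $\phi\colon R_a\to R_{a'}$ corresponds to a morphism $\operatorname{Spec}R_{a'}\to\operatorname{Spec}R_a$, so $\phi(f)=f\circ\psi$ where $\psi\colon \mathbb P^1\to\mathbb P^1$ sends the $a'$-points to the $a$-points. Tracking this gives $\phi((t-a_i)^k)=(ct-ca'_i)^k\cdot(\text{correction})$; a short direct computation with $\psi(t)=ct+(a_1-ca'_1)$ yields exactly $\phi((t-a_i)^k)=c^k(t-a'_i)^k$ as in (a). Case (b): $\psi(\infty)=a'_1$ (after relabelling the $a'$'s), and correspondingly $\psi$ sends some $a'_j$ to $\infty$; relabel so that it is forced by the structure that the point going to $\infty$ pairs with $a_1$, i.e.\ $\psi^{-1}(\infty)$ corresponds to $a_1$. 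Then $\psi$ has a pole, $\psi(t)=(\alpha t+\beta)/(t-a'_1)$, and imposing that it carries the remaining forbidden points correctly determines the constants; the bookkeeping produces the relations $(a_i-a_1)(a'_i-a'_1)=c$ and the two displayed formulas for $\phi((t-a_1)^k)$ and $\phi((t-a_i)^k)$.

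For the converse (``moreover'') direction I would simply exhibit, in each case, the fractional linear transformation $\psi$ and check that conjugation/composition by it sends each algebra generator $t,(t-a_1)^{-1},\dots,(t-a_n)^{-1}$ of $R_a$ into $R_{a'}$ and vice versa, hence defines an algebra isomorphism; the explicit formulas in (a) and (b) are precisely the images of the generators, so this is a finite verification using Lemma~\ref{lemma1}(a),(c) to see that the images generate.

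The main obstacle is the bookkeeping in Case (b): correctly matching up which $a_i$ pairs with $a'_1$ (the point sent to $\infty$) versus which pairs with $\psi(\infty)$, and then deriving the symmetric relation $(a_i-a_1)(a'_i-a'_1)=c$ together with the exact constants in front of $(t-a'_i)^k$. One has to be attentive to the fact that in case (b) the roles of $a_1$ (among the $a$'s) and $a'_1$ (among the $a'$'s) are both distinguished but in a ``crossed'' way, which is why the theorem allows reordering of \emph{both} tuples there, whereas in case (a) only the $a'$'s need reordering. A secondary subtlety is justifying rigorously that $\phi$ must come from a fractional linear transformation --- i.e.\ that $\phi$ extends to $\C(t)$; this needs the observation that $R_a^*$ (computed in Lemma~\ref{lemma1}(c)) together with the identification of the common fraction field forces the extension, or alternatively one notes that any $\C$-algebra endomorphism of a finitely generated subring of $\C(t)$ containing $t$ is determined by $\phi(t)$, which must be transcendental (else $\phi$ is not injective), hence $R_{a'}\subseteq\C(\phi(t))$ and a degree/pole count forces $\phi(t)$ to have the form $(\alpha t+\beta)/(\gamma t+\delta)$.
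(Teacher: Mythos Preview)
Your proposal is correct and follows essentially the same route as the paper: extend $\phi$ to an automorphism of the fraction field $\C(t)$, identify it as a M\"obius transformation, and then split into the affine case ($\infty\mapsto\infty$, giving (a)) versus the case with a pole (giving (b)). The only cosmetic difference is that the paper derives $m=n$ from the isomorphism of unit groups $R_a^*/\C^*\cong\Z^n$ (Lemma~\ref{lemma1}(c)) rather than from your ``forbidden points'' count, and carries out the Case~(b) bookkeeping explicitly by tracking each $\phi(t-a_i)$ as a ratio of linear factors; your geometric phrasing via $\operatorname{Spec}R_a\subset\mathbb P^1$ is an equivalent repackaging of the same computation.
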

The fact that any automorphism of $\mathbb P^1$ is completely determined by where it sends $0,1$ and $\infty$,
is due to the fact that the Galois group of $\mathbb C(t)$ consists of all M\"obius inversions
(see \cite[Example 21.9]{MR1243417}).

\begin{proof}  We know that  $\phi(R_a^*)= R_{a'}^*$ as multiplicative
groups, and $\phi(\C^*)$ $=\C^*$. By \lemref{lemma1}, we have that
$$R^*_a=\{c\prod_{i=1}^n(t-a_i)^{k_i}\,|\, c\in\C^*, k_i\in\Z\}$$
and
$$R^*_{a'}=\{c\prod_{i=1}^m(t-a'_i)^{k_i}\,|\, c\in\C^*,
k_i\in\Z\}.$$ Using the isomorphism $\phi$ we see that $\Z^n\cong
R_a^*/\C^* \cong R_{a'}^*/\C^*\cong \Z^m$, yielding  $m=n$.

Since $\phi:R_a\to R_{a'}$ is an isomorphism of $\mathbb C$-algebras and $R_b\subset \mathbb C(t)$ for $b=a,a'$, by the universal mapping property of $\mathbb C(t)$, there exists a unique field automorphism in the Galois group $\widehat\phi\in \text{Gal}(\mathbb C(t)/\mathbb C)$ such that $\widehat\phi|_{R_a}=\phi$.  Now it is known that every element in this Galois group   is a M\"obius transformation, so
$$
\phi(t)=\frac{ct+d}{at+b}
$$
for some $a,b,c,d\in\mathbb C$, $ad-bc\neq 0$.

We also know that $\phi(R^*_a)=R_{a'}^*$ so that there exists $c_i\in\C^*$, $k_{ik}\in\{0,\pm 1\}$, such that
$$
\varphi(t-a_i)=c_i(t-a'_1)^{k_{i1}}\cdots (t-a'_n)^{k_{in}},\forall\,\,
i=1,2,\cdots, n
$$
where there are at most two nontrivial factors since
\begin{equation}\label{eqn1}
\varphi(t-a_i)=\frac{ct+d}{at+b}-a_i=\frac{(c-aa_i)t+d-ba_i}{at+b}.
\end{equation}
Since $\phi(t-a_i)$ is not a constant we must have that $(c-aa_i)t+d-ba_i$ and $at+b$ are relatively prime.
\vskip 5pt {\bf Case 1.}  Suppose $a=0$.  Then we may assume $b=1$ and $\phi(t)=ct+d$.  Then fixing $1\leq i\leq n$ we have
$$
ct+d-a_i=\phi(t-a_i)=c_i(t-a_j')=c_it-c_ia_j'
$$
for some $j$.   After reindexing the $a'_j$'s if necessary we may assume that $j=i$.  Thus $c_i=c$ and $d-a_i=-c_ia_i'=-ca_i'$.  Then
$$
a_i-ca_i'=d=a_k-ca_k'
$$
for any $i$ and $k$.   This gives us (a).

\vskip 5pt {\bf Case 2.} Suppose $a\neq 0$.  Note that from \eqnref{eqn1} we see that
$$
\phi(t-a_i)=\frac{c_if_i(t)}{t-a_{l_i}'}
$$
with $f_i(t)$ being either $1$ or a linear factor of the form $t-a_k'$ for some $k\neq l_ i$.
Since $a\neq 0$ we may assume $a=1$ and we have
 $$
 \frac{(c-a_i)t+d-ba_i}{t+b}=\frac{ct+d}{t+b}-a_i=\phi(t-a_i)=\frac{c_if_i(t)}{t-a_{l_i}'}
$$
for all $i$ where $f_i(t)$ is either 1 or a linear factor of the form $t-a_k$ for some $k$.    So all of the $\phi(t-a_i)$ have a fixed denominator $t+b=t-a_s'$ for some fixed $s$. As the quotients $(t-a_1)^{-1},\dots,(t-a_n)^{-1}$ are linearly independent, and the quotients $(t-a_1')^{-1},\dots, (t- a_{s-1}')^{-1},  (t- a_{s+1}')^{-1}\dots, (t-a_n')^{-1}$ are linearly independent, there must be exactly one index $k$ for which $f_k(t)=1$.    After reindexing the $a_i$'s we may assume that $k=1$.
After reindexing the $a_j'$'s we may assume $b=-a_1'$ and
$$
\phi(t-a_i)=\frac{c_i(t-a'_i)}{(t-a'_1)},\quad i\neq 1.
$$
Now setting $c=c_1$ we get
$$
(a_1-a_i)=\phi((t-a_i)-(t-a_1))=\frac{c_i(t-a'_i)}{(t-a'_1)}-\frac{c}{t-a'_1}=\frac{c_it-c_ia'_i-c}{t-a'_1}$$ and thus we see that
$$
a_1-a_i=c_i,\,\,c=(a_1-a_i)(a'_1-a'_i),\,\forall\,\, i\geq 2.$$
Now
$$
\varphi(t-a_1)=\frac{c}{t-a'_1},\,\,\text{ and
}$$¡¡$$\varphi(t-a_i)=\frac{(a_1-a_i)(t-a'_i)}{t-a'_1} \quad\text{ for all
$i\geq2$}.
$$ It is easy to see that this $\phi$ defines an isomorphism
from $R_a$ to $R_{a'}$.  Consequently,
\begin{align*}
\phi((t-a_1)^k )&=\frac{c^{k} }{(t-a'_1)^{k}}; \\
 \phi((t-a_i)^k )&=\frac{(a_1-a_i)^k(t-a'_i)^k}{(t-a'_1)^{k}}, \,\,\forall\,\,i>1.
 \end{align*}
This completes the proof.
\end{proof}

For convenience, we call isomorphisms defined in (a) and (b)
{\bf isomorphisms of the first kind} and {\bf isomorphisms of the
second kind} respectively.

\begin{corollary}\label{phit} Suppose $\{a_1, a_2, \cdots, a_n\}$ and $\{a'_1, a'_2, \cdots,
a'_n\}$ are two sets of distinct complex numbers. Then a map $\phi:
R_a\to R_{a'}$ is an isomorphism of associative algebras if and only
if its unique extension $\phi\in\Aut(\C(t))$ maps the unordered set $\{\infty,
a'_1,a'_2,\cdots,a'_n\}$ onto $\{\infty, a_1,a_2,\cdots,a_n\}$.
Moreover this isomorphism $\phi$ corresponds to a unique map from $\{\infty,
a'_1,a'_2,\cdots,a'_n\}$ onto $\{\infty, a_1,a_2,\cdots,a_n\}$. Consequently, any
such $\phi$ is completely determined by the image of $t$.
\end{corollary}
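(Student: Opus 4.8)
The plan is to leverage Theorem~\ref{ringisothm} directly: it already tells us that any algebra isomorphism $\phi:R_a\to R_{a'}$ extends uniquely to a M\"obius transformation $\widehat\phi\in\Aut(\C(t))$, and that $\phi$ is of the first or second kind. So the real content of the corollary is the geometric reformulation in terms of the punctured point sets, together with the ``moreover'' and ``consequently'' clauses, which should fall out almost for free once the forward and backward directions are separated cleanly.

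First I would prove the forward direction. Given an isomorphism $\phi:R_a\to R_{a'}$, recall $R_a$ consists precisely of those rational functions regular away from $\{a_1,\dots,a_n\}$, i.e.\ with possible poles only in $\{\infty,a_1,\dots,a_n\}$ on $\mathbb P^1$. Since $\widehat\phi$ is an automorphism of $\C(t)$ restricting to $\phi$ on $R_a$, and $R_{a'}$ is the ring of functions with poles only in $\{\infty,a_1',\dots,a_n'\}$, the condition $\widehat\phi(R_a)=R_{a'}$ forces $\widehat\phi$ to carry the ``allowed pole locus'' of $R_{a'}$ to that of $R_a$; concretely, $g\in\C(t)$ has a pole at $p$ iff $g\circ\widehat\phi^{-1}$ has a pole at $\widehat\phi(p)$, so $\widehat\phi(\{\infty,a_1',\dots,a_n'\})=\{\infty,a_1,\dots,a_n\}$ as unordered subsets of $\mathbb P^1$. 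Rather than argue this pole-counting abstractly, I can simply read it off the explicit formulas in Theorem~\ref{ringisothm}: in case~(a), $\widehat\phi(t)=(t-a_1+ca_1')/1$ composed appropriately sends $a_i'\mapsto a_i$ and $\infty\mapsto\infty$; in case~(b), $\widehat\phi(t)=(ct-ca_1'+\dots)/(t-a_1')$ sends $a_1'\mapsto\infty$, $\infty\mapsto a_1$, and $a_i'\mapsto a_i$ for $i\ge2$. Either way the stated set-theoretic image claim holds.

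For the converse, suppose $\widehat\phi\in\Aut(\C(t))$ maps $\{\infty,a_1',\dots,a_n'\}$ onto $\{\infty,a_1,\dots,a_n\}$. Then for $g\in R_a$, every pole of $g\circ\widehat\phi$ lies in $\widehat\phi^{-1}(\{\infty,a_1,\dots,a_n\})=\{\infty,a_1',\dots,a_n'\}$, so $\widehat\phi(R_a)\subseteq R_{a'}$; applying the same reasoning to $\widehat\phi^{-1}$ gives the reverse inclusion, hence $\phi:=\widehat\phi|_{R_a}$ is an isomorphism $R_a\to R_{a'}$. For the ``moreover'' clause: a M\"obius transformation is determined by its values at any three distinct points of $\mathbb P^1$, and here $n\ge1$ forces $\{\infty,a_1,\dots,a_n\}$ to have at least two points while $\widehat\phi$ must send $\infty$ or some $a_i'$ to each — more to the point, once one specifies the bijection between the two $(n+1)$-element sets, if $n\ge2$ there are at least three points and $\widehat\phi$ is pinned down; the degenerate case $n=1$ (three points $\{\infty,0\}$-type data on each side) must be checked by hand but is immediate since $R_a^*$ then already rigidifies things via Theorem~\ref{ringisothm}. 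Finally, ``$\phi$ is completely determined by the image of $t$'' is just the statement that a M\"obius transformation is determined by $\widehat\phi(t)$, since $t$ generates $\C(t)$ as a field.

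The main obstacle, such as it is, will be the bookkeeping in the ``moreover'' clause: arguing that the correspondence $\phi\leftrightarrow(\text{bijection of point sets})$ is genuinely one-to-one requires knowing that distinct bijections of $\{\infty,a_1',\dots,a_n'\}$ onto $\{\infty,a_1,\dots,a_n\}$ give distinct M\"obius maps, which is true when $n\ge2$ (three or more points) but needs the small-$n$ cases dispatched separately. Since the bulk of the work — the classification of the isomorphisms and the fact that they are fractional linear — is already done in Theorem~\ref{ringisothm}, I expect the whole proof to be short, essentially a translation of that theorem into the language of configurations of points on $\mathbb P^1$.
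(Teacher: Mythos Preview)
Your argument for the biconditional is correct and is exactly what the paper intends: the corollary is stated without proof, as an immediate translation of \thmref{ringisothm} into the language of point configurations on $\mathbb P^1$, and your pole-locus reasoning (together with the explicit check against the formulas in cases (a) and (b)) is the natural way to unpack it.

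Where you go slightly astray is in the ``moreover'' clause. You read it as asserting a one-to-one correspondence between isomorphisms $\phi$ and bijections of the point sets, and then worry about small $n$. But the clause is only asserting that each $\phi$ \emph{determines} a unique bijection, which is immediate once you know the extension $\widehat\phi\in\Aut(\C(t))$ is unique (and that follows because $R_a$ has field of fractions $\C(t)$). The stronger bijective-correspondence reading is in fact false in both directions: for $n=1$ the map $\phi\mapsto(\text{bijection})$ is not injective (e.g.\ every $t\mapsto ct$, $c\in\C^*$, fixes both $0$ and $\infty$, so your claim that ``$R_a^*$ rigidifies things'' fails there), and for $n\geq 3$ it is not surjective onto all bijections of the $(n{+}1)$-element sets, since a M\"obius map is determined by three points and a generic permutation of four or more points will not extend. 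So drop that paragraph; the ``moreover'' needs no separate argument at all, and the ``consequently'' clause is, as you say, just the fact that a field endomorphism of $\C(t)$ over $\C$ is determined by the image of $t$.
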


\begin{remark} Because of \Corref{phit}, automorphisms of the second kind $\phi(t)$ in part (b) of \thmref{ringisothm} are completely determined by the formula
$$
\varphi(t)=\frac c{t-a_{\tau(i_0)}}+a_{i_0},
$$
for some fixed $c$ and permutation $\tau$  (see \secref{examples} for use of this fact).
\end{remark}

Now we can prove the main result in this section.

\begin{theorem}\label{liealgiso}
Suppose $\{a_1, a_2, \cdots, a_n\}$ and $\{a'_1, a'_2, \cdots,
a'_m\}$ are two sets of distinct complex numbers and
$\psi:\mathscr{V}_a\to \mathscr{V}_{a'}$ is an isomorphism of Lie
algebras. Then $m=n$ and one of the following two cases holds
\begin{enumerate}
\item[(a).] There exists some constant nonzero $c\in\C$ such that
$a_i-a_1=c(a'_i-a'_1)$ and
$\psi((t-a_i)^{k}\partial)=c^{k-1}(t-a'_i)^{k}\partial$ for all
$k\in\Z$ and $i=1,2,\cdots ,n$ after reordering $a'_i$ if necessary.
\item[(b).] There exists some constant nonzero $c\in\C$ such that
$(a_i-a_1)(a'_i-a'_1)=c$ for all $i\ne1$, and
$$\psi((t-a_1)^k\partial)=\frac{-c^{k-1}\partial}{(t-a'_1)^{k-2}},$$
$$\psi((t-a_i)^k\partial)=\frac{-(a_1-a_i)^k(t-a'_i)^k\partial}{c(t-a'_1)^{k-2}}, \,\,\forall\,\,i>1,$$
for all $k\in\Z$ after reordering $a_i$ and $a'_i$ if necessary.
\end{enumerate}
\end{theorem}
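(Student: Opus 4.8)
The plan is to deduce \thmref{liealgiso} from the classification of associative-algebra isomorphisms in \thmref{ringisothm}. Since $\mathscr V_a=\Der(R_a)=R_a\p$, and likewise for $\mathscr V_{a'}$, everything reduces to the \emph{rigidity statement}: every Lie isomorphism $\psi\colon\mathscr V_a\to\mathscr V_{a'}$ is \emph{geometric}, i.e.\ $\psi(D)=\widehat\phi\circ D\circ\widehat\phi^{-1}$ for some field automorphism $\widehat\phi\in\Aut(\C(t))$ with $\widehat\phi(R_a)=R_{a'}$. Granting this, $\phi:=\widehat\phi|_{R_a}$ is an algebra isomorphism $R_a\to R_{a'}$, so \thmref{ringisothm} forces $m=n$ and puts $\phi$ into one of its two normal forms. (The equality $m=n$ also follows independently once Section 4 is available, since $\psi$ induces an isomorphism $H^2(\mathscr V_a,\C)\cong H^2(\mathscr V_{a'},\C)$ of second cohomology groups and these have dimensions $n$ and $m$.)

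Once geometricity is known, the rest is a mechanical translation. Writing $\widehat\phi(t)=F(t)$, the chain rule gives $\psi\big(f(t)\p\big)=\big(f(F(t))/F'(t)\big)\p$ for $f\in R_a$, so $\psi$ is the map on vector fields induced by the coordinate change $F$, with $1/F'$ a Jacobian. In case (a) of \thmref{ringisothm}, $F(t)=ct+d$ with $a_i-ca'_i=d$, so $F'=c$; in case (b), $F(t)=c/(t-a'_1)+a_1$ with $(a_i-a_1)(a'_i-a'_1)=c$, so $F'(t)=-c/(t-a'_1)^2$. A short computation of $\psi\big((t-a_i)^k\p\big)=\big((F(t)-a_i)^k/F'(t)\big)\p$ using these normal forms reproduces verbatim the formulas in \thmref{liealgiso}(a) and (b); the factor $1/F'$ is precisely what shifts $c^k$ to $c^{k-1}$ in (a) and produces the exponent $k-2$, the sign, and the extra $c^{-1}$ in (b), and the displayed conditions on $a_i,a'_i$ are exactly those of \thmref{ringisothm}.

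Thus all the content is in the rigidity claim, which is the main obstacle. I would prove it from the internal geometry of $\mathscr V_a$: it contains the canonical copy $\mathfrak{sl}_2(\C)=\Span\{\p,t\p,t^2\p\}$ of globally regular vector fields on $\mathbb P^1$ and, for each $i$, a Witt-type subalgebra $\mathscr V_{(a_i)}:=\Der\big(\C[t,(t-a_i)^{-1}]\big)$, recognizable intrinsically, e.g.\ as the Lie subalgebra generated by $\mathfrak{sl}_2(\C)$ together with any single element of $\mathscr V_a$ having a pole only at $a_i$. One then shows $\psi$ carries each local Witt subalgebra $\mathscr V_{(a_i)}$ onto such a subalgebra of $\mathscr V_{a'}$, invokes the known automorphism theory of the Witt algebra to see each such restriction is fractional linear, and uses the cross-brackets $[(t-a_i)^{-k}\p,(t-a_j)^{-l}\p]$ of \thmref{algiso}(b) --- whose structure constants involve the differences $a_i-a_j$ --- to force these maps to agree and glue into one M\"obius transformation $\widehat\phi$ implementing $\psi$, the residual ambiguity being exactly the $PGL_2(\C)$-freedom of \thmref{ringisothm}. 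An alternative, purely algebraic route to the same rigidity uses that $\mathscr V_a=R_a\p$ is free of rank one over $R_a$ and that the multiplication operators $m_g\in\operatorname{End}(\mathscr V_a)$ obey $[\ad X,m_g]=m_{X(g)}$ for all $X\in\mathscr V_a$, which lets one pin down the commutative algebra $\{m_g\}\cong R_a$ and the cyclic vector $\p$ in Lie-theoretic terms and then pass to fraction fields. Either way, the hard part --- the only place genuine ideas enter --- is ruling out ``exotic,'' non-geometric Lie isomorphisms, i.e.\ showing that the abstract Lie algebra $\mathscr V_a$ remembers both the ring $R_a$ and its module structure; granted that, everything else, including $m=n$, is bookkeeping via \thmref{ringisothm} and \Corref{phit}.
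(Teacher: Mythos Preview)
Your overall architecture is exactly the paper's: reduce the Lie isomorphism to a ring isomorphism $\phi\colon R_a\to R_{a'}$ (``rigidity''), invoke \thmref{ringisothm} to get $m=n$ and the two normal forms for $\phi$, and then compute $\psi(x\p)=\phi(x)\cdot(\phi\circ\p\circ\phi^{-1})$ explicitly in each case. Your chain-rule bookkeeping is correct and reproduces the formulas in (a) and (b) verbatim.

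The difference is in how rigidity is obtained. The paper does not argue from Witt subalgebras or from the relation $[\ad X,m_g]=m_{X(g)}$; it simply invokes Skryabin's theorem on regular Lie rings of derivations \cite[Theorem~2]{MR966871}. One checks the two hypotheses there: with $J_1=\{D(x):D\in\mathscr V_a,\,x\in R_a\}$ and $J_2=\{D_1(x)D_2(y)-D_1(y)D_2(x)\}$, one has $J_1=R_a$ (since $\mathscr V_a(t)=R_a$) and $J_2=0$ (since $\mathscr V_a$ has rank one over $R_a$), and Skryabin's result then hands you the ring isomorphism $\phi$ with $\psi(x\p)=\phi(x)(\phi\circ\p\circ\phi^{-1})$ directly. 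Your second sketched route --- recovering $R_a$ inside $\operatorname{End}(\mathscr V_a)$ via $[\ad X,m_g]=m_{X(g)}$ --- is essentially the mechanism behind Skryabin's proof, so you are re-deriving a special case of a result the paper takes off the shelf. Your first route, via intrinsic recognition of the local Witt subalgebras $\mathscr V_{(a_i)}$ and gluing fractional-linear pieces using the cross-brackets of \thmref{algiso}(b), is plausible in outline but would need real work to make rigorous: one must characterize those subalgebras purely Lie-theoretically (your ``generated by $\mathfrak{sl}_2$ and one element with a pole only at $a_i$'' description already presupposes knowledge of the pole locus) and then control how $\psi$ permutes them before any gluing argument can start. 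The Skryabin citation sidesteps all of that.
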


\begin{proof}  As in \cite{MR966871}, let
$$J_1=\{D(x)\,|\, D\in \mathscr{V}_a, x\in R_a\},$$
$$J_2=\{D_1(x)D_2(y)-D_1(y)D_2(x)\,|\, D_1, D_2\in \mathscr{V}_a, x,
y\in R_a\}.$$ Then $J_1=6R_a=R_a$ and $J_2=0$.
This means that we can apply \cite[Theorem 2]{MR966871}, to see that there exists an associative
algebra isomorphism $\phi: R_a\to R_{a'}$ such that
$\psi(x\partial)=\phi(x)(\phi\circ\partial\circ \phi^{-1})$ for all
$x\in R_a$. From \thmref{ringisothm} we know that $\phi$ has two different
kinds.

\vskip 5pt {\bf Case 1.} $\phi$ is of the type in (a) of \thmref{ringisothm}.

We have $\phi\circ \partial\circ \phi^{-1}(t)=\phi\circ
\partial(c^{-1}(t-a_1)+a'_1)=c^{-1},$
i.e., $\phi\circ \partial\circ \phi^{-1}=c^{-1}\partial$.
Consequently $\psi((t-a_i)^k\partial)=c^{k-1}(t-a'_i)^k\partial$.

\vskip 5pt {\bf Case 2.} $\phi$ is of the type in (b) of \thmref{ringisothm}.

 Now we compute
$$\phi\circ \partial\circ \phi^{-1}(t)=\phi\circ
\partial(\frac{c}{t-a_1}+a'_1)$$
$$=\phi(\frac{-c}{(t-a_1)^2})=-c^{-1}(t-a'_1)^2,$$
i.e., $\phi\circ \partial\circ \phi^{-1}=-c^{-1}(t-a'_1)^2\partial$.
Consequently,
\begin{align*}
\psi((t-a_1)^k\partial)&=\frac{-c^{k-1}\partial}{(t-a'_1)^{k-2}}; \\
\psi((t-a_i)^k\partial)&=\frac{-(a_1-a_i)^k(t-a'_i)^k\partial}{c(t-a'_1)^{k-2}}, \,\,\forall\,\,i>1.
\end{align*}
This completes the proof.
\end{proof}

Similarly to what we defined earlier, we call isomorphisms in (a) and (b)
{\bf isomorphisms of the first kind} and {\bf isomorphisms of the
second kind} respectively. Now we can determine isomorphism classes
of the algebra $\W_a$. 
For any $a=(a_1,\cdots ,a_n)\in\C^n$ with $a_1,\cdots ,a_n$
distinct, we define
$$\mathfrak{C}_1=\{(xa_1+y, xa_2+y,\cdots ,xa_n+y)\,|\,x\in\C^*, y\in\C\}$$
and
$$\mathfrak{C}_2=\{(x(a_1-a_i)^{-1}+y,\cdots ,x(a_n-a_i)^{-1}+y)\,|\,x\in\C^*,
y\in\C, 1\leq i\leq n\},$$ where we set $0^{-1}=0$.
By direct computations we can easily prove the following theorem.

\begin{theorem}\label{classificationthm} Suppose that $\{a_1, a_2, \cdots, a_n\}$ are distinct complex
numbers and $a=(a_1,\cdots ,a_n)$.

\begin{enumerate}
\item[(a).] The $\mathfrak{C}_1$ is the set of all $a'\in \C^m$ for some
$m\in\N$ such that $\W_{a'}\simeq\W_a$ under  isomorphisms of the
first kind;

\item[(b).]  $\mathfrak{C}_2$ is  the set of all $a'\in \C^m$ for some
$m\in\N$ such that $\W_{a'}\simeq\W_a$  under isomorphisms of the
second kind;

\item[(c).]  $\mathfrak{C}_1\cup\mathfrak{C}_2$ is the set of all $a'\in
\C^m$ for some $m\in\N$ such that $\W_{a'}\simeq\W_a$.
\end{enumerate}

\end{theorem}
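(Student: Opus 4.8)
The plan is to verify each of the three assertions in \thmref{classificationthm} by directly combining \thmref{liealgiso} with bookkeeping about the two families $\mathfrak{C}_1$ and $\mathfrak{C}_2$. First I would prove (a). For the forward inclusion, suppose $\W_{a'}\simeq\W_a$ via an isomorphism of the first kind. By \thmref{liealgiso}(a) there is a nonzero $c\in\C$ with $a_i-a_1=c(a'_i-a'_1)$ after reordering, which rearranges to $a_i=c a'_i+(a_1-ca'_1)$ for all $i$; setting $x=c\in\C^*$ and $y=a_1-ca'_1\in\C$ exhibits $a$ as an element of the set defining $\mathfrak{C}_1$ applied to $a'$, hence $a'\in\mathfrak{C}_1$ once one checks $\mathfrak{C}_1$ is an equivalence class (i.e. $a\in\mathfrak{C}_1(a')\iff a'\in\mathfrak{C}_1(a)$, which is immediate from invertibility of the affine map $t\mapsto xt+y$). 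For the reverse inclusion, given $a'=(xa_1+y,\dots,xa_n+y)$ one uses the ``moreover'' clause of \thmref{ringisothm}, or rather the fact that \thmref{liealgiso}(a) is reversible: the formulas $\psi((t-a_i)^k\p)=c^{k-1}(t-a'_i)^k\p$ with $c=x$ patently define a Lie algebra homomorphism because they are induced by the ring isomorphism $\phi$ of \thmref{ringisothm}(a) composed with conjugation by $\phi$, and this is bijective on the bases of \lemref{lemma1}(b); so $\W_{a'}\simeq\W_a$.

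Next I would prove (b) in exactly the same way, now invoking \thmref{liealgiso}(b). The forward direction: an isomorphism of the second kind gives (after reordering) a nonzero $c$ with $(a_i-a_1)(a'_i-a'_1)=c$ for $i\ne1$. To recognize $a$ as a member of $\mathfrak{C}_2$ applied to $a'$ one takes the index $i=1$ and writes $a_j-a_1=c(a'_j-a'_1)^{-1}$; so $a_j=c\,(a'_j-a'_1)^{-1}+a_1$ for $j\ne1$, while for $j=1$ one has $a_1=c\cdot 0^{-1}+a_1$ under the convention $0^{-1}=0$. Thus $a\in\mathfrak{C}_2(a')$ with $x=c$, $y=a_1$, and the distinguished index equal to the one playing the role of ``$a'_1$''. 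Conversely, any $a'$ of the displayed $\mathfrak{C}_2$ form is realized by the explicit second-kind formulas in \thmref{liealgiso}(b), which again are manifestly a Lie isomorphism since they come from the ring isomorphism of \thmref{ringisothm}(b) conjugated into $\Der$. One should double-check here that $\mathfrak{C}_2$ is genuinely symmetric in $a\leftrightarrow a'$ — that is, that the relation ``$a'\in\mathfrak{C}_2(a)$'' does not secretly depend on which of the $n$ points is singled out — and this follows because the second-kind M\"obius maps $t\mapsto c/(t-a_i)+y$ form, together with their inverses, a symmetric family: the inverse of such a map is again of the same shape.

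Finally (c) is a formal consequence of (a), (b) and \thmref{ringisothm}: by \thmref{liealgiso} every Lie isomorphism $\W_{a'}\simeq\W_a$ is of the first or the second kind (these are the only two cases produced by \cite[Theorem~2]{MR966871} together with \thmref{ringisothm}), so the set of all $a'$ with $\W_{a'}\simeq\W_a$ is the union of the two sets described in (a) and (b), namely $\mathfrak{C}_1\cup\mathfrak{C}_2$. I expect the only real subtlety — the ``main obstacle'' such as it is — to be purely organizational rather than mathematical: making sure the reorderings permitted in \thmref{liealgiso} are absorbed consistently into the quantifier ``$1\le i\le n$'' appearing in the definition of $\mathfrak{C}_2$, and confirming that each of $\mathfrak{C}_1$, $\mathfrak{C}_2$ is closed under the equivalence it is meant to describe (reflexivity is the $x=1,y=0$ case; symmetry and transitivity reduce to the group structure of affine maps and of the second-kind M\"obius maps respectively). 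Once that bookkeeping is pinned down, the ``direct computations'' alluded to before the theorem statement are indeed routine, so I would state them briefly and omit the arithmetic.
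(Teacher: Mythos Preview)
Your proposal is correct and matches the paper's approach: the paper itself simply says ``By direct computations we can easily prove the following theorem,'' and what you outline is precisely the natural unpacking of those direct computations via \thmref{liealgiso} (and the ``moreover'' clause of \thmref{ringisothm}) together with the bookkeeping on reorderings and the affine/M\"obius symmetry of $\mathfrak{C}_1$ and $\mathfrak{C}_2$. There is nothing more to the paper's argument than what you have supplied.
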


Denote by $\Aut(\W_a)$ the automorphism group of $\W_a$. Next we
will study $\Aut(\W_a)$ for various $a=(a_1, a_2, \cdots, a_n)$. Let
us start with some simple examples.

\vskip 5pt {\bf Example 1.} If $n=1$, it is well-known that $\W_a$
is isomorphic to the centerless Virasoro algebra for any $a\in\C$,
whose automorphism group is $\C^*\times \Z/(2\Z)$.

\begin{remark} From \thmref{liealgiso} we know that if $n>1$,
each automorphism of $\W_a$ of the second kind corresponds to a
permutation on the set $\{a_1, a_2, \cdots, a_n\}$ and a choice of
$a_1$, while each automorphism  of $\W_a$ of the first kind
corresponds to a permutation on the set $\{a_1, a_2, \cdots, a_n\}$
(independent of the choice of $a_1$).     \end{remark}

\vskip 5pt {\bf Example 2.}
 If $n=2$, any two $\W_a$ and $\W_{a'}$
are isomorphic for all $a,a'\in\C^2$ provided $a_1\neq a_2$ and
$a'_1\neq a'_2$. Without loss of generality, we may assume that
$a=(0,1)$. By \thmref{liealgiso}, we can easily deduce that the first kind
automorphisms of $\R_a$ are $\iota,\phi$ where
$$\iota={\rm identity\,\, map\,\,\,and\,\,\,} \phi(t)=1-t;$$ the second kind automorphism of $\W_a$ consists of
$\pi_1,\pi_2,\pi_3,\pi_4$:
$$\pi_1(t)=\frac{1}{t}; \,\,\, \pi_2(t)=\frac{1}{1-t};$$
$$\pi_3(t)=1-\frac{1}{1-t}; \,\,\, \pi_4(t)=1- \frac{1}{t}.$$ It is easy to see that
$$\phi^2=\pi_2^3=\iota,\,\,\phi\pi_2\phi=\pi_2^2,\,\, \pi_2^2=\pi_4,
\phi\pi_2=\pi_1,\,\,\phi\pi_4=\pi_3.$$ Then we have that
$\Aut(\W_a)\cong D_3$, the $3$-rd dihedral group.

\vskip 5pt To study $\Aut(\W_a)$ in general,  from \thmref{classificationthm} we can
assume if necessary that
\begin{equation}\label{3.1}
a_1+a_2+\cdots+a_n=0
\end{equation}
Let $\s$ be an automorphism  of $\W_a$ of the first kind
corresponding to a permutation which we still denote by $\s$ on the
index set $\{1, 2, \cdots, n\}$, i.e.,
$a_{\s(i)}-a_{\s(j)}=c(a_i-a_j)$ for all $i,j$, where $c\in\C^*$. We
can use the same symbol $\s$ for different meanings since $\s=1$ on
$\W_a$ iff $\s=1$ on $\{1, 2, \cdots, n\}$.

For convenience, we let
 $S_a$ be the set of all triples $(i,i',c)$ such that
\begin{equation}\label{3.2}
\{c(a_j-a_i)^{-1}\,|\,j\ne i\}=\{(a_j-a_{i'})\,|\,j\ne i'\}
\end{equation}
 for
some constant $c$ depending on $i,i'$.

\begin{theorem}\label{2ndisothm} Let  $a_1, a_2, \cdots, a_n$ be distinct complex
numbers with $n\ge 3$, and let $a=(a_1,\cdots ,a_n)$ and
$\s\in\Aut(\W_a)$.
\begin{enumerate}
\item[(a).] Suppose \eqnref{3.1} holds. Then
$\s$ is of the first kind iff there exist distinct complex numbers
$b_1,b_2,\cdots, b_r$ and $A_d=\{\omega^i\,|\,i=0,1,\cdots, d-1\}$
where $\omega$ is a $d$-th primitive root of unity such that $\{a_1,
a_2, \cdots, a_n\}=\cup_{i=1}^rA_db_i$. For each such partition of
$\{a_1, a_2, \cdots, a_n\}$ we get $d$ automorphisms of the first kind.
\item[(b).] $\s$ is of the second kind iff  there exist a pair $(i,i')$ and $c\in\C^*$ such that
 \eqnref{3.2} holds.
Furthermore, the set of automorphisms of the second kind and the set
$S_a$ is in 1-1 correspondence.
\end{enumerate}
\end{theorem}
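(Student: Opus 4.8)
The plan is to read everything off the explicit descriptions of first- and second-kind automorphisms already recorded in \thmref{liealgiso}, together with the reformulation via \Corref{phit} and the remark following it, so that the proof reduces to elementary facts about finite multiplicative subsets of \C.

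For (a), start from the relation $a_{\sigma(i)}-a_{\sigma(j)}=c(a_i-a_j)$ attached to a first-kind automorphism $\sigma$. The one real idea is to sum this relation over $j=1,\dots,n$ and use the normalization \eqnref{3.1}: since $\sum_j a_{\sigma(j)}=\sum_j a_j=0$, the sum collapses to $n\,a_{\sigma(i)}=c\,n\,a_i$, hence $a_{\sigma(i)}=c\,a_i$ for every $i$. Thus $\sigma$ is realized by multiplication by $c$ on the finite set $\{a_1,\dots,a_n\}$. As at most one $a_i$ vanishes, the permutation $a_i\mapsto c a_i$ has finite order $m$, so $c^m a_i=a_i$ for some $a_i\neq 0$, whence $c^m=1$; let $d$ be the order of $c$ and $\omega$ a primitive $d$-th root of unity with $A_d=\langle c\rangle=\langle\omega\rangle$. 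Then $\{a_1,\dots,a_n\}$ is a disjoint union of $\langle c\rangle$-orbits, each of the form $A_d b_i$ (the orbit of $0$, if present, being the degenerate $A_d\cdot 0=\{0\}$), which is the asserted decomposition. Conversely, given any decomposition $\{a_1,\dots,a_n\}=\bigcup_{i=1}^r A_d b_i$ and any $k\in\{0,1,\dots,d-1\}$, multiplication by $c=\omega^k$ preserves each $A_d b_i$, hence permutes $\{a_1,\dots,a_n\}$; the induced permutation $\sigma$ satisfies $a_{\sigma(i)}-a_{\sigma(j)}=\omega^k(a_i-a_j)$, so by \thmref{liealgiso}(a) it yields an automorphism of the first kind, and the $d$ distinct values of $\omega^k$ give $d$ distinct such automorphisms.

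For (b), use that by \Corref{phit} and the remark following it every second-kind automorphism of \W is induced by a M\"obius map $\varphi(t)=\frac{c}{t-a_i}+a_{i'}$ with $c\in\C^*$ and $1\le i,i'\le n$, and conversely such a $\varphi$ defines an automorphism of $R_a$ exactly when it maps $\{\infty,a_1,\dots,a_n\}$ onto itself. Since $\varphi(\infty)=a_{i'}$, $\varphi(a_i)=\infty$, and $\varphi(a_j)=\frac{c}{a_j-a_i}+a_{i'}$ for $j\neq i$, this last condition says precisely that
$$\Big\{\frac{c}{a_j-a_i}+a_{i'}\ \Big|\ j\neq i\Big\}=\{a_j\mid j\neq i'\},$$
which, upon subtracting $a_{i'}$, is exactly \eqnref{3.2} for the triple $(i,i',c)$; this gives the equivalence. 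For the bijection with $S_a$: a second-kind automorphism does not fix $\infty$ (its M\"obius representative $\frac{ct+d}{at+b}$ has $a\neq 0$), so $\varphi$ determines $i'$ via $\varphi(\infty)=a_{i'}$, determines $i$ via $\varphi(a_i)=\infty$, and then $c=(a_j-a_i)(\varphi(a_j)-a_{i'})$ for any $j\neq i$ is forced; conversely a triple $(i,i',c)\in S_a$ reconstructs $\varphi$, so the two sets correspond bijectively.

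The substantive input is the two-line averaging step in (a); the rest is a translation of the classification results. The only points needing care are bookkeeping: in (a), allowing the degenerate orbit of $0$ and observing that different admissible values of $d$ may yield overlapping families of automorphisms (harmless for the stated count, which is per partition); in (b), checking that \eqnref{3.2} is exactly the condition placing $\varphi$ in $\Aut(R_a)$ and that the triple-to-automorphism assignment and its inverse are well defined.
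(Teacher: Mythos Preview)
Your proof is correct, and for part (a) it is genuinely cleaner than the paper's. The paper fixes an orbit $O$ of $\sigma$, first shows $c^{|O|}=1$, then argues separately that $c$ must be a \emph{primitive} $|O|$-th root of unity (so all non-singleton orbits share the same length $d$), then proves by a telescoping-sum computation that $\sum_{s\in O}a_s$ is independent of the orbit and hence vanishes by \eqnref{3.1}, and only then deduces $a_{s+1}=ca_s$ inside each orbit. Your averaging step---summing $a_{\sigma(i)}-a_{\sigma(j)}=c(a_i-a_j)$ over $j$ and invoking \eqnref{3.1}---yields $a_{\sigma(i)}=c\,a_i$ in one line, after which finiteness of $\sigma$ forces $c$ to be a root of unity and the orbit structure is immediate. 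This bypasses the paper's case analysis entirely; what the paper's longer route buys is perhaps a slightly more explicit handling of the singleton-orbit case and the observation that non-singleton orbits have a common length, but these are consequences of your $a_{\sigma(i)}=c\,a_i$ anyway.

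For part (b) your argument via \Corref{phit} and the subsequent remark is essentially the same as the paper's (which appeals to \thmref{liealgiso} directly); both reduce to reading off \eqnref{3.2} from the M\"obius form of a second-kind automorphism, and your explicit recovery of $(i,i',c)$ from $\varphi$ makes the bijection with $S_a$ slightly more transparent than the paper's terse ``can be verified by direct computation.''
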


\begin{proof}  From \cite[\thmref{liealgiso}]{MR966871}
we know that there exists   an  associative algebra automorphism
$\phi$ of $R_a$ such that $\s(f(t)\p)=\phi(f)(\phi\circ\partial\circ
\phi^{-1})$ for all $f(t)\in R_a$.

\vskip 5pt {\bf (a).} ``$\Rightarrow$". In this case, there exists
$c\in\C^*$ such that $a_{\sigma(i)}-a_{\sigma(j)}=c(a_i-a_j)$ for
any $1\leq i,j\leq n$. We may assume that $\s\ne1$. Let  $O$ be an
orbit of $\sigma$  in $\{1,2,\cdots ,n\}$. Suppose $d=|O|>1$. Without
loss of generality we may assume that $O=\{1,2,\cdots ,d\}$ and
$s=\s^{s-1}(1)$ for all $s=1,\cdots ,d$. Then we have
$$a_2-a_1=c(a_{1}-a_d)=c^2(a_{d}-a_{d-1})=\cdots =c^d(a_{2}-a_{1}),$$
yielding $c^d=1$. From
$$a_{s}-a_{1}=(a_2-a_1)(1+c+c^{2}+\cdots +c^{s-2}),\,\,\forall\, 2\leq s\leq d,$$
 we see that $c^{s-1}\ne1$ for any $1\le s\le d$ or $c= 1$.  We see that $c\neq 1$ because otherwise
 if we set
 $$
 x=a_2-a_1=a_3-a_2=\cdots =a_1-a_d
 $$
 we get $a_k=a_1+(k-1)x$ and hence $x=a_1-a_d=a_1-(a_1-(d-1)x)=(1-d)x$ or $d=0$.

Thus, $c$ is a primitive $d$-th root of unity.

Then we deduce that  $\{1,2,\cdots ,n\}$  is a union of all
$\sigma$-orbits and each orbit has the  length $1$ or $d$. If
$O'=\{a_j\}$ is a singleton orbit of $\s$, then we have
$a_{\s^{s+1}(1)}-a_j=c(a_{\s^s(1)}-a_{j})$, i.e.,
$ca_{\s^s(1)}-a_{\s^{s+1}(1)} =(c-1)a_{j}$. Adding them up from
$s=0$ to $d-1$, we can deduce
$$d(c-1)a_{j}=\sum_{s=0}^{d-1}(ca_{\s^{s+1}(1)}-a_{\s^s(1)})=(c-1)\sum_{s=0}^{d-1}a_{\s^s(1)},$$
which gives $a_j=\sum_{s=1}^{d}a_s/d$.

It follows from this that if there is a singleton orbit for
$\sigma$, then the singleton orbit is unique, and the sum
$\sum_{s\in O}a_{s}$ is independent of $O$ for  any orbit $O$ of
$\sigma$ with $|O|>1$.

If $O'$ is another non-singleton orbit of $\sigma$ containing $a_i$,
then
$a_{\s^{s+1}(i)}-a_{\s^{s+1}(1)}=c(a_{\s^{s}(i)}-a_{\s^{s}(1)})$,
i.e., $ca_{\s^s(1)}-a_{\s^{s+1}(1)} =ca_{\s^s(i)}-a_{\s^{s+1}(i)}$.
Adding them up from $s=0$ to $d-1$, we can deduce
$(c-1)\sum_{s=0}^{d-1}a_{\s^s(i)}=(c-1)\sum_{s=0}^{d-1}a_{\s^s(1)}$,
which yields
$\sum_{s=0}^{d-1}a_{\s^s(i)}=\sum_{s=0}^{d-1}a_{\s^s(1)}$. Again,
the sum $\sum_{s\in O'}a_{s}$ is independent of $O'$ for  any orbit
$O'$ of $\sigma$ with $|O'|>1$.  From the assumption \eqnref{3.1} we see
that $\sum_{s\in O'}a_{s}=0$ for any orbit $O'$.

Now returning to $O$, we have
$$a_3-a_2=c(a_2-a_1), ..., a_d-a_2=c(a_{d-1}-a_1), a_{1}-a_2=c(a_d-a_1).$$
Adding them up and using $\sum_{s=1}^{d}a_s=0$, we deduce that
$a_2=ca_1$. Similarly, $a_{s+1}=ca_s$ for all $1\leq s\leq d-1$, and
we conclude that $O=a_iA_d$ for any $1\leq i\leq d$. Thus
$O'=a_iA_d$ for any orbit $O'$ which can be a singleton consisting
of $0$ and any $a_i\in O'$. So this direction of (a) is proved.

``$\Leftarrow$". This direction can be verified by direct
computation by taking $a_{\s(i)}=\omega a_{i}$.

\vskip 5pt  {\bf (b).} ``$\Rightarrow$". From \thmref{liealgiso}, we know
that there exist $1\leq i\leq n$ and a permutation $\tau$ on the
index set $\{1,2,\cdots,n\}$ such that

$$\phi(t-a_1)=\frac{c}{t-a_{\tau(1)}}, {\rm
\,\,and\,\,\,} (a_j-a_i)(a_{\tau(j)}-a_{\tau(i)})=c,\, \forall\,
j\neq1.$$ Thus \eqnref{3.2} holds.

``$\Leftarrow$". This direction can be verified by direct
computation.
\end{proof}
It is not hard to see from the proof of this theorem the following
\begin{corollary}\label{cyclic} Suppose\eqnref{3.1} holds.  The set of automorphisms of the first kind forms a cyclic subgroup generated by the rotation $z\mapsto e^{\imath \theta}z$
where $\theta=2\pi/d$ for some positive integer $d$.
\end{corollary}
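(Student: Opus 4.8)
The plan is to extract the cyclic-group structure directly from the analysis carried out in part (a) of \thmref{2ndisothm}. Recall that any automorphism $\s$ of the first kind, under the normalization \eqnref{3.1}, comes with a constant $c\in\C^*$ satisfying $a_{\s(i)}-a_{\s(j)}=c(a_i-a_j)$ for all $i,j$, and the proof of \thmref{2ndisothm}(a) showed that $c$ must be a primitive $d$-th root of unity for some $d\in\N$ (with $d=1$ giving $\s=1$), and moreover that after the normalization one in fact has $a_{\s(i)}=c\,a_i$ for every $i$; that is, $\s$ acts on the $a_i$ simply by the rotation $z\mapsto cz$ of $\C$. So the first task is to observe that the scalar $c$ associated to $\s$ is a complete invariant: the map $\s\mapsto c(\s)$ is well defined from the set of first-kind automorphisms to $\C^*$, and it is injective because, by \thmref{liealgiso}(a), $\s$ is completely determined on all of $\W_a$ by the value $c$ together with the induced permutation of $\{a_1,\dots,a_n\}$, and we have just seen the permutation is itself forced by $c$ (it is $a_i\mapsto c a_i$).

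Next I would check that $\s\mapsto c(\s)$ is a group homomorphism. If $\s,\s'$ are first-kind automorphisms with associated scalars $c,c'$, then composing the defining relations gives $a_{(\s\s')(i)}-a_{(\s\s')(j)} = c\bigl(a_{\s'(i)}-a_{\s'(j)}\bigr) = cc'\,(a_i-a_j)$ for all $i,j$; equivalently, in the normalized form, $(\s\s')$ sends $a_i$ to $c(c' a_i)=(cc')a_i$. Hence $\s\s'$ is again of the first kind and $c(\s\s')=c(\s)c(\s')$. Thus the set $G$ of first-kind automorphisms is a subgroup of $\Aut(\W_a)$ (it is visibly closed under inverses, since $\s^{-1}$ has scalar $c^{-1}$), and $c\colon G\hookrightarrow\C^*$ is an injective homomorphism.

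It remains to identify the image. By part (a) of \thmref{2ndisothm}, for each admissible partition $\{a_1,\dots,a_n\}=\bigcup_{i=1}^r A_d b_i$ one obtains exactly $d$ automorphisms of the first kind, whose scalars are precisely the $d$-th roots of unity $\omega^i$, $i=0,\dots,d-1$; and an automorphism with scalar $c$ exists precisely when such a partition into $A_d$-orbits exists, forcing $c$ to be a primitive $d$-th root of unity for that $d$. Consequently the image of $c$ is a finite subgroup of $\C^*$, hence is exactly the group $\mu_d$ of $d$-th roots of unity for $d$ the largest integer admitting such a partition; it is generated by the primitive root $\omega=e^{2\pi\i/d}$. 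Therefore $G$ is cyclic of order $d$, generated by the automorphism corresponding to the rotation $z\mapsto e^{\i\theta}z$ with $\theta=2\pi/d$, as claimed.

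The only point requiring a little care — and the place where one must genuinely invoke the normalization \eqnref{3.1} rather than just formal manipulation — is the step showing that the scalar $c$ already pins down the permutation (so that $\s\mapsto c(\s)$ is injective and multiplicative); this is exactly the content, already established inside the proof of \thmref{2ndisothm}(a), that under $\sum a_i=0$ one has $a_{\s(i)}=c\,a_i$. Everything else is a routine repackaging of that theorem, so I would keep the write-up short and simply cite those computations.
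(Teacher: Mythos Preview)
Your proposal is correct and is precisely the argument the paper has in mind: the paper gives no explicit proof beyond the one-line remark ``It is not hard to see from the proof of this theorem,'' and your write-up is the natural unpacking of that sentence---extracting the scalar $c$, using the normalization \eqnref{3.1} to get $a_{\s(i)}=c\,a_i$, and observing that $\s\mapsto c(\s)$ embeds the first-kind automorphisms into a finite subgroup of $\C^*$. The only minor quibble is the phrase ``$d$ the largest integer admitting such a partition,'' which is not needed for the corollary and would require a word of justification; you can simply stop at ``the image is a finite subgroup of $\C^*$, hence cyclic of some order $d$.''
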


From the above corollary and \thmref{2ndisothm} we know that if $n>1$,
there are at most $n$ automorphism of $\W_a$ of the first kind, and
there are at most $n^2(n-1)$  automorphisms of $\W_a$ of the second kind.
So the group $\Aut(\W_a)$ is finite with order at most $n^3-n^2+n$.

{\bf Example 3.} If $n=3$, we may assume that $a=(0,1,x)$ for some
$x\in\C\setminus\{0,1\}$. Applying \thmref{liealgiso} we deduce that
$\W_{(0,1,x)}\simeq \W_{(0,1, y)}$ if and only if one of the
following holds:
\begin{equation}\label{3.3}
y=x, \,\, 1-x,\,\, \frac1x,\,\,  1-\frac1x,\,\,
\frac1{1-x},\,\,  1-\frac1{1-x}.
\end{equation}

The following are clearly nontrivial automorphisms  of
$\W_{(0,1,x)}$ of  the second kind:
$$\s_1(t)=\frac xt$$
which is obtained by taking $a=(0,1,x)$ and $a'=(0,x,1)$,
$$\s_2(t)=\frac {t-x}{t-1}$$
which is obtained by taking $a=(1,0,x)$ and $a'=(1,x, 0)$,
$$\s_3(t)=\frac {x(t-1)}{t-x}$$
which is obtained by taking $a=(x,0,1)$ and $a'=(x,1,0)$. It is easy
to see that  $H=\{\iota, \s_1,\s_2,\s_3\}$ is a subgroup of
$\Aut(\W_a)$ isomorphic to the Klein four group $V=D_2$.

\vskip 5pt {\bf Case 1:} All of the the six values in \eqnref{3.3} are
pairwise distinct.

From \thmref{2ndisothm} we know that the identity map is the only
automorphism of $\W_{(0,1,x)}$ of  the first kind.  In this case,
$\Aut(\W_a)=H=V=D_2$.

If two of the values in \eqnref{3.3} are the same, then $x=-1, 2, \frac 12$
or $x=\frac{1\pm\sqrt{-3}}{2}$.

\vskip 5pt  {\bf Case 2:} $x=-1, 2, \frac 12$.

Since all the three corresponding algebras are isomorphic, we may
take $x=-1$. Then we have the automorphism of the first kind
$\tau_1(t)=-t$. Thus $\Aut(\W_a)=H\cup H\tau_1\cong D_4=\langle a,b\,|\, a^4=b^2=1,\enspace bab=a^{-1}\rangle$ where for example $a=\sigma_2\tau$ and $b=\tau$, in this case.

\vskip 5pt {\bf Case 3:} $x=\frac{1\pm\sqrt{-3}}{2}$.

Since all the two corresponding algebras are isomorphic, we may take
$x=\frac{1+\sqrt{-3}}{2}$. Then we have the automorphism of the
first kind $\tau_2(t)=-x(t-1)$. It is clear that $\tau_2^3=\iota$.
Thus $\Aut(\W_a)=H\cup H\tau_2\cup H\tau_2^2\cong A_4$ the alternating group on four letters in this case.
This is because $\Aut(\W_a)$ is nonabelian and contains $H$  as a normal subgroup.   $D_6$ contains an element of order $6$ whereas $\Aut(\W_a)$ does not.

Therefore we have determined the automorphism group for all
$\W_{(0,1,x)}$. This agrees with the remark at the end of Sect.1 in
\cite{MR1261553}. 

\vskip 5pt {\bf Example 4.} It is straightforward to see that
$\Aut(R_{(1,-1,2,-2)})=C_2=D_1$ (where $c=-1$, $i_0=1$,
$\sigma=(1,2)(3,4)$), $\Aut(R_{(1, \omega, \omega^2, 10, 10\omega,
10\omega^2)})=C_3$ (where $c=\omega^2$, $i_0=1$,
$\sigma=(1,2,3)(4,5,6)$), where $\omega$ is a primitive cubic root
of the unity.  From \thmref{liealgiso} we know that the  the only
automorphism of $\W_{(0,-1,-2, 3)}$ is the identity.

\begin{theorem}[\cite{MR1427489} Theorem 2.3.1] Any finite subgroup of the automorphism group $\text{Aut}(\hat{\mathbb C})$ of the Riemann sphere $\hat{\mathbb C}=\mathbb C\cup \{\infty\}$ is conjugate to a rotation group.
\end{theorem}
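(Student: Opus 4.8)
The plan is to run the classical averaging argument: produce a $G$-invariant positive-definite Hermitian form on $\mathbb C^2$ and conjugate the group into the unitary group. Recall first that $\mathrm{Aut}(\hat{\mathbb C})$ is the group of M\"obius transformations, i.e. $\mathrm{PSL}_2(\mathbb C)$, acting on $\mathbb P^1=\hat{\mathbb C}$ in the usual way, and that the group of rotations of the sphere is exactly the image of $\mathrm{SU}(2)$, namely $\mathrm{PSU}(2)\cong\mathrm{SO}(3)$. So the theorem amounts to showing that every finite subgroup $G\subset\mathrm{PSL}_2(\mathbb C)$ is $\mathrm{PSL}_2(\mathbb C)$-conjugate to a subgroup of $\mathrm{PSU}(2)$.

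First I would lift $G$ to $\mathrm{SL}_2(\mathbb C)$: let $\pi\colon\mathrm{SL}_2(\mathbb C)\to\mathrm{PSL}_2(\mathbb C)$ be the quotient by $\{\pm I\}$ and set $\tilde G=\pi^{-1}(G)$, a finite subgroup of $\mathrm{SL}_2(\mathbb C)$ of order $2|G|$. On $\mathbb C^2$ with its standard Hermitian inner product $\langle\cdot,\cdot\rangle$, define the averaged form $\langle v,w\rangle_G=\sum_{g\in\tilde G}\langle gv,gw\rangle$. A routine check shows $\langle\cdot,\cdot\rangle_G$ is again a positive-definite Hermitian form, and it is $\tilde G$-invariant: for $h\in\tilde G$, reindexing the sum over $gh$ gives $\langle hv,hw\rangle_G=\langle v,w\rangle_G$. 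Since any two positive-definite Hermitian forms on $\mathbb C^2$ are carried into one another by a linear change of coordinates (Gram--Schmidt), there is $A\in\mathrm{GL}_2(\mathbb C)$ with $\langle Av,Aw\rangle_G=\langle v,w\rangle$ for all $v,w$. Replacing $A$ by $A/\sqrt{\det A}$ for a choice of square root, we may assume $A\in\mathrm{SL}_2(\mathbb C)$, which does not change the induced conjugation on $\mathrm{PSL}_2(\mathbb C)$. Then for every $h\in\tilde G$ one computes $\langle A^{-1}hA\,v,\,A^{-1}hA\,w\rangle=\langle hAv,hAw\rangle_G=\langle Av,Aw\rangle_G=\langle v,w\rangle$, so $A^{-1}\tilde GA\subset\mathrm{U}(2)\cap\mathrm{SL}_2(\mathbb C)=\mathrm{SU}(2)$. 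Pushing down through $\pi$, the image $\bar A$ of $A$ in $\mathrm{PSL}_2(\mathbb C)$ satisfies $\bar A^{-1}G\bar A\subset\mathrm{PSU}(2)$, and since $\mathrm{PSU}(2)$ acts on $\mathbb P^1\cong S^2$ precisely as the group of rotations, $G$ is conjugate to a rotation group.

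The only place anything beyond routine bookkeeping happens is the passage from an abstract finite group to an honest finite \emph{linear} group: one must note that the lift $\tilde G=\pi^{-1}(G)$ is finite (immediate, since $\ker\pi$ is) so the averaging sum is meaningful, and that the determinant normalization $\det A=1$ is harmless at the level of $\mathrm{PSL}_2(\mathbb C)$. An alternative route that sidesteps even these minor points uses the identification $\mathrm{PSL}_2(\mathbb C)=\mathrm{Isom}^+(\mathbb H^3)$: a finite group of isometries of hyperbolic $3$-space fixes the circumcenter of any orbit (by convexity of the distance function to a point), and the stabilizer of a point of $\mathbb H^3$ is a maximal compact subgroup isomorphic to $\mathrm{SO}(3)$, i.e. a conjugate of the rotation group. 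I would present the Hermitian-averaging argument as the main proof, since it is entirely elementary and self-contained, and mention the $\mathbb H^3$ argument as a remark.
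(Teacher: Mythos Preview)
Your argument is correct and is the standard unitary-trick proof. Note, however, that the paper does not give its own proof of this statement at all: it is quoted verbatim as Theorem~2.3.1 from Shurman's \emph{Geometry of the Quintic} \cite{MR1427489} and used as a black box, so there is no in-paper proof to compare against. Your averaging construction of a $\tilde G$-invariant Hermitian form and the resulting conjugation into $\mathrm{SU}(2)$ is exactly the classical argument (and is essentially what one finds in Shurman's text); the alternative via the fixed-point property of finite isometry groups of $\mathbb H^3$ is also valid and well known. Either would serve as a self-contained substitute for the citation.
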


\begin{theorem}[\cite{MR1427489} Theorem 2.6.1]\label{Klein} If $G$ is a finite rotation group of the group of automorphisms of the Riemann sphere, then it is isomorphic to one of the following:
\begin{enumerate}[(a).]
\item  A cyclic group $C_n=\{s|s^n=1\}$ for $n\geq 1$.
\item  A dihedral group $D_n=\langle s,t\,|\, s^n=1=t^2,\enspace tst=s^{-1}\rangle$.
\item  A platonic rotation group $A_4,S_4$ or $A_5$.
\end{enumerate}
\end{theorem}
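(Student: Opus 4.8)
The plan is to run the classical orbit-counting (``counting poles'') argument for finite subgroups of $SO(3)$. By the preceding theorem we may assume $G$ is an honest rotation group, i.e.\ a finite subgroup of $SO(3)$ acting on the sphere $S^2$; set $N=|G|$ and assume $N>1$. Each nonidentity $g\in G$ is a rotation about a unique axis and hence fixes exactly two antipodal points of $S^2$, its \emph{poles}. Let $P$ be the set of all poles of all nonidentity elements. Then $G$ acts on $P$; for $p\in P$ the stabilizer $G_p$ is cyclic of some order $n_p\ge 2$ and the orbit of $p$ has size $N/n_p$.

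First I would count the set $\{(g,p)\mid g\in G\setminus\{e\},\ gp=p\}$ in two ways: it has $2(N-1)$ elements since every nonidentity element contributes its two poles, and, grouping $P$ into $G$-orbits $\mathcal O_1,\dots,\mathcal O_r$ with stabilizer orders $n_1,\dots,n_r$, it equals $\sum_j (N/n_j)(n_j-1)$. Dividing by $N$ gives
$$2-\frac2N=\sum_{j=1}^r\Bigl(1-\frac1{n_j}\Bigr).$$
Because $\frac12\le 1-\frac1{n_j}<1$ while $1\le 2-\frac2N<2$, this forces $r\in\{2,3\}$. If $r=2$ the equation becomes $\frac2N=\frac1{n_1}+\frac1{n_2}$ with $n_i\le N$, hence $n_1=n_2=N$: there are two globally fixed poles, $G$ fixes the axis through them, and $G\cong C_N$. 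If $r=3$, order $n_1\le n_2\le n_3$; the same inequality analysis forces $n_1=2$, then $n_2\in\{2,3\}$. For $n_2=2$ one gets $n_3=N/2$ arbitrary, with a cyclic subgroup of index $2$ (the rotations about the axis of the size-$2$ orbit) inverted by any other element, so $G\cong D_{n_3}$. For $n_2=3$ one gets $n_3\in\{3,4,5\}$, i.e.\ exactly the triples $(N;n_1,n_2,n_3)=(12;2,3,3),\ (24;2,3,4),\ (60;2,3,5)$.

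Next I would identify the three exceptional groups. For $N=12$ the orbit of poles with stabilizer order $3$ has size $4$; the resulting homomorphism $G\to S_4$ is injective (a nontrivial kernel element is a nontrivial rotation, which fixes only two points of $S^2$, yet it would fix all four) and has image of order $12$, which must be $A_4$. For $N=24$ the orbit of poles with stabilizer order $3$ has size $8$, four antipodal pairs, i.e.\ four axes permuted by $G$; the map $G\to S_4$ is again faithful (a short check shows no nontrivial rotation fixes all four axes) with image of order $24$, so $G\cong S_4$. For $N=60$ I would first show $G$ is simple — a proper nontrivial normal subgroup is a union of conjugacy classes whose sizes are pinned down by the pole data and Sylow theory, and no such union has cardinality properly dividing $60$ — and then invoke the classical fact that a simple group of order $60$ is isomorphic to $A_5$. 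Finally, the converse direction (that each listed group actually occurs as a rotation group) is handled by exhibiting the rotation symmetries of a regular $n$-gon, the regular tetrahedron, the cube/octahedron, and the dodecahedron/icosahedron.

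The step I expect to be the main obstacle is this last identification: the pole count produces the numerical list $(N;n_1,n_2,n_3)$ essentially for free, but passing from ``a rotation group with these invariants'' to the named abstract group requires either constructing an explicit faithful action on $4$ (or $5$) natural geometric objects and matching orders, or — for $N=60$ — a genuine simplicity argument combined with the uniqueness of the simple group of order $60$.
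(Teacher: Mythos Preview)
The paper does not prove this theorem at all: it is quoted verbatim from Shurman's book \cite{MR1427489} as an outside result, with no argument supplied. So there is no ``paper's own proof'' to compare against.

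Your argument is the standard classical one (essentially Klein's), and it is correct as sketched. The orbit--counting identity
\[
2-\frac{2}{N}=\sum_{j=1}^{r}\Bigl(1-\frac{1}{n_j}\Bigr)
\]
is derived correctly, the case split $r\in\{2,3\}$ is right, and the numerical analysis yielding $C_N$, $D_{n_3}$, and the three triples $(12;2,3,3)$, $(24;2,3,4)$, $(60;2,3,5)$ is accurate. Your identifications of the exceptional groups via faithful permutation actions on $4$ poles, $4$ axes, and (for $N=60$) simplicity plus the uniqueness of the simple group of order $60$ are also the standard route and are fine, though as you note the $A_5$ step requires the most care. This is exactly the argument one finds in Shurman's Chapter~2, so you have reproduced the cited proof rather than found an alternative.
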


\noindent
\begin{remark} Despite having proved \thmref{liealgiso} and \thmref{2ndisothm}, we can see from the the above examples that it is
{\bf not} easy to completely determine $\Aut(\W_a)$ for an arbitrary $a$, but the above theorem tells us that $\text{Aut}\,(\W_{a})$ is one of the groups of type (a)-(c) above.
\end{remark}

In the appendix at the end of the paper we show that each of the
finite groups listed in \thmref{Klein} appear as an $\Aut(R_a)$ for
some $a$.

Here is one last conclusion we can get about $\W_a$:
\begin{theorem}
Suppose $\{a_1, a_2, \cdots, a_n\}$ is a set of distinct complex
numbers. Then all derivations of  $\mathscr{V}_a$ are inner
derivations.
\end{theorem}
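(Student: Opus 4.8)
Let $D$ be a derivation of $\mathscr{V}_a$. The plan is to show that after subtracting a suitable inner derivation $D$ vanishes identically, proceeding in three steps and using throughout the bracket formula \eqnref{definingbracket} and the basis in \lemref{lemma1}. \emph{Step 1: reduce to $D(\p)=D(t\p)=0$.} Write $D(\p)=g_0\p$ and $D(t\p)=g_1\p$ with $g_0,g_1\in R_a$. Applying $D$ to $[\p,t\p]=\p$ and expanding by \eqnref{definingbracket} yields $g_0\p=(g_0-tg_0'+g_1')\p$, hence $g_1'=tg_0'$ and so $(g_1-tg_0)'=-g_0$. Putting $X:=(g_1-tg_0)\p\in\mathscr{V}_a$, a short computation gives $[X,\p]=-(g_1-tg_0)'\p=g_0\p=D(\p)$ and $[X,t\p]=\bigl((g_1-tg_0)-t(g_1-tg_0)'\bigr)\p=g_1\p=D(t\p)$; replacing $D$ by $D-\ad(X)$ we may assume $D(\p)=D(t\p)=0$. (Conceptually, $\C\p\oplus\C t\p\oplus\C t^2\p$ is an $\mathfrak{sl}_2(\C)$ inside $\mathscr{V}_a$ whose Casimir acts on the adjoint module as the scalar $4$, so that $H^1(\mathfrak{sl}_2;\mathscr{V}_a)=0$; only $D(\p)=D(t\p)=0$ is needed below.)

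\emph{Step 2: $D$ is diagonal in the basis of \lemref{lemma1}.} Fix $i$ and set $h_i:=(t-a_i)\p=t\p-a_i\p$, so $D(h_i)=0$ and hence $D\circ\ad(h_i)=\ad(h_i)\circ D$; thus $D$ preserves every eigenspace of $\ad(h_i)$ on $\mathscr{V}_a$. The key point is that these eigenspaces are precisely the lines $\C(t-a_i)^m\p$, $m\in\Z$, of eigenvalue $m-1$. By \lemref{lemma1} there is a decomposition $\mathscr{V}_a=P\oplus M_1\oplus\cdots\oplus M_n$, where $P=\Span\{(t-a_i)^k\p\mid k\in\Z_+\}$ is the polynomial part and $M_j=\Span\{(t-a_j)^{-l}\p\mid l\in\N\}$. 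From \eqnref{definingbracket},
\begin{align*}
[h_i,(t-a_i)^k\p]&=(k-1)(t-a_i)^k\p,\\
[h_i,(t-a_j)^{-l}\p]&=\bigl(-(l+1)(t-a_j)^{-l}-l(a_j-a_i)(t-a_j)^{-l-1}\bigr)\p,
\end{align*}
so $\ad(h_i)$ preserves every summand; on $P$ and on $M_i$ it is diagonal in the displayed bases with one-dimensional eigenspaces $\C(t-a_i)^m\p$, while on $M_j$ with $j\ne i$ the second identity exhibits $\ad(h_i)$, in the basis $(t-a_j)^{-l}\p$, as lower triangular with pairwise distinct diagonal entries $-(l+1)$ and nonzero subdiagonal coefficients $-l(a_j-a_i)$. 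Hence for a finite sum $w=\sum_{l\le N}c_l(t-a_j)^{-l}\p$ with $c_N\ne0$, the element $\ad(h_i)w$ has a nonzero $(t-a_j)^{-N-1}\p$-component, so $\ad(h_i)$ has no eigenvector in $M_j$ and, being block diagonal, none in $\bigoplus_{j\ne i}M_j$; this proves the claim. Consequently $D\bigl((t-a_i)^m\p\bigr)=\mu^{(i)}_m(t-a_i)^m\p$ for scalars $\mu^{(i)}_m\in\C$, with $\mu^{(i)}_0=0$ (since $D(\p)=0$) and $\mu^{(i)}_1=0$ (since $D(t\p-a_i\p)=0$).

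\emph{Step 3: the diagonal entries vanish.} Applying $D$ to $[(t-a_i)^p\p,(t-a_i)^q\p]=(q-p)(t-a_i)^{p+q-1}\p$ (immediate from \eqnref{definingbracket}) and comparing the coefficients of $(t-a_i)^{p+q-1}\p$ gives $\mu^{(i)}_{p+q-1}=\mu^{(i)}_p+\mu^{(i)}_q$ for all $p\ne q$. With $p=0$ this reads $\mu^{(i)}_{m-1}=\mu^{(i)}_m$ for $m\ne0$, so $\mu^{(i)}_m=\mu^{(i)}_0=0$ for every $m\ge0$; then $(p,q)=(2,-1)$ gives $0=\mu^{(i)}_0=\mu^{(i)}_2+\mu^{(i)}_{-1}=\mu^{(i)}_{-1}$, and $\mu^{(i)}_{m-1}=\mu^{(i)}_m$ propagates this to all $m\le-1$. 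Thus $D$ annihilates $(t-a_i)^m\p$ for all $m\in\Z$; letting $i$ range over $1,\dots,n$ shows $D$ kills the whole basis $\{t^k\p\mid k\in\Z_+\}\cup\{(t-a_i)^{-l}\p\mid 1\le i\le n,\ l\in\N\}$ of \lemref{lemma1}, so $D=0$ and the original derivation, which equals $\ad(X)$, is inner.

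The main obstacle is the eigenspace analysis of $\ad(h_i)$ in Step 2. This operator is \emph{not} diagonalizable on $\mathscr{V}_a$: on each $M_j$ with $j\ne i$ it acts as a genuine infinite-dimensional Jordan-type operator with no eigenvectors at all (the would-be eigenvectors being divergent series formally summing to $(t-a_i)^{-1}\p\notin M_j$), so one must check carefully that its eigenspaces are nonetheless exactly the coordinate lines $\C(t-a_i)^m\p$ --- which is precisely what forces $D$ to be diagonal. Everything else reduces to routine manipulation with \eqnref{definingbracket}.
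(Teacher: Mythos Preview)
Your argument is correct, but it differs markedly from the paper's proof, which is a one-line appeal to Skryabin's general result on regular Lie rings of derivations \cite{MR966871}: since $\mathscr{V}_a=R_a\partial$ is a regular Lie algebra in Skryabin's sense, his theorem immediately gives that every derivation is inner. Your approach, by contrast, is a direct and entirely self-contained computation inside $\mathscr{V}_a$: after normalizing so that $D(\partial)=D(t\partial)=0$, you exploit the eigenspace structure of $\ad\bigl((t-a_i)\partial\bigr)$ to force $D$ to be diagonal on the basis $(t-a_i)^m\partial$, and then the Witt-algebra relations kill the diagonal entries. The paper's route is shorter and situates the result in a broader framework (it applies to any regular $\Der(A)$ satisfying Skryabin's hypotheses), whereas yours is more elementary, requires no external machinery, and makes transparent exactly which structural features of $\mathscr{V}_a$ are doing the work---in particular, the nice observation that on each $M_j$ with $j\neq i$ the operator $\ad(h_i)$ is a strict shift with no eigenvectors, so all eigenspaces of $\ad(h_i)$ are already one-dimensional and contained in the copy of the Witt algebra $\operatorname{Vir}^{(i)}$.
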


\begin{proof} This is a direct corollary of \thmref{liealgiso} in \cite{MR966871}.
\end{proof}
%

\section{Universal central extensions of $\mathscr{V}_a$}

In this section we shall determine the universal central extension
of the Lie algebra $\W_a$.

For any $1\leq i\leq n$,we define the skew-symmetric bilinear map
$\phi_i: \,\W_a\times\W_a\rightarrow \C$ as follows on the basis
elements in \lemref{lemma1}:

\begin{equation}\label{4.1}
\phi_i(t^{k+1}\partial,
(t-a_i)^{-l+1}\partial)=\begin{cases} {k+1\choose
l+1}a_i^{k-l}(l^3-l),\,\,\forall\, k,l\in\Z_+,\hskip 5pt\text{ if $a_i\neq 0$,}  \\
\delta_{k,l}(l^3-l), \,\,\forall\,
k,l\in\Z\hskip 50pt\text{ if $a_i=0$,}
\end{cases}
\end{equation}
 \begin{align*}
\phi_i((t-a_i)^{-k}\partial, (t-a_j)^{-l}\partial)
&=\frac{(k+l+1)!}{(k-1)!(l-1)!(a_j-a_i)^{k+1}(a_i-a_j)^{l+1}},\\
&\hskip 100pt\forall \,
k,l\in\N, j\neq i,
\end{align*}
and $\phi_i=0$ at all other pairs of
basis elements. From this it is easy to see that
\begin{align}\phi_i((t-a_i)^{k+1}\partial,
(t-a_i)^{-l+1}\partial)=\delta_{k,l}(l^3-l), \,\,\forall\,
k,l\in\Z.
\end{align}
which is the motivation for the formula \eqnref{4.1}.

Now we have the following

\begin{lemma} The above defined $\phi_i$ are $2$-cocycles on $\W_a$ which are not $2$-coboundaries for all $i=1,...,n$.
\end{lemma}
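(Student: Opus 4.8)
The plan is to verify directly that each $\phi_i$ satisfies the $2$-cocycle identity
\[
\phi_i([x,y],z)+\phi_i([y,z],x)+\phi_i([z,x],y)=0
\]
for all basis elements $x,y,z$ of $\W_a$, and then to exhibit, for each $i$, an explicit element of $\W_a$ on which $\phi_i$ is nonzero but on which every $2$-coboundary vanishes. For the cocycle condition, I would first reduce the bookkeeping by a change of variable: since all our constructions are compatible with the algebra isomorphisms of \thmref{ringisothm}, one may assume $a_i=0$, so that $\Vir^{(i)}=\Span\{t^{k+1}\p\mid k\in\Z\}$ carries the standard Gelfand--Fuks cocycle $(t^{k+1}\p,t^{-l+1}\p)\mapsto\delta_{k,l}(l^3-l)$, which is well known to be a cocycle. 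The remaining work is to check the Jacobi-type identity when the three arguments are distributed among $\Vir^{(i)}$ and the ``other'' basis vectors $(t-a_j)^{-m}\p$ with $j\ne i$; here one uses the bracket formulas of \thmref{algiso}(a),(b) together with the partial-fraction expansion derived in its proof. The combinatorial identity of \Corref{combinatorialid} is exactly the content of the worst case (all three arguments of the form $(t-a_j)^{-m}\p$), so that case is already in hand; the mixed cases are easier because $\phi_i$ vanishes on many of the resulting terms.

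For the fact that $\phi_i$ is not a coboundary, the cleanest argument is to restrict to the subalgebra $\Vir^{(i)}\cong\Vir$ (centerless Virasoro). There the restriction of $\phi_i$ is, after the normalization $a_i=0$, the standard cocycle $\delta_{k,l}(l^3-l)$, which represents a nonzero class in $H^2(\Vir,\C)$ since $\Vir$ is centerless and $\Vir\oplus\C c$ with this cocycle is the Virasoro algebra, a nontrivial central extension. Hence no $1$-cochain $f\colon\W_a\to\C$ can satisfy $\phi_i(x,y)=f([x,y])$ for all $x,y$, because such an $f$ would restrict to a trivializing cochain on $\Vir^{(i)}$. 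Concretely: if $\phi_i=\partial^* f$ then evaluating on $x=(t-a_i)^{2}\p$, $y=(t-a_i)^{-1}\p$, $z=(t-a_i)^{-1}\p$ and using $[(t-a_i)^{2}\p,(t-a_i)^{-1}\p]=-3(t-a_i)^{0}\p=-3\p$ and $\phi_i((t-a_i)^2\p,(t-a_i)^{-1}\p)=l^3-l|_{l=2}=6\ne 0$ leads, together with $\phi_i(\p,\cdot)=0$ on the relevant elements and the coboundary relation, to $6=f([(t-a_i)^2\p,(t-a_i)^{-1}\p])=-3f(\p)$, and then a second evaluation (say with $(t-a_i)^{3}\p$ and $(t-a_i)^{-2}\p$) forces an inconsistent value of $f(\p)$. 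Spelling out this pair of relations to reach a contradiction is routine.

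The main obstacle is the verification of the cocycle identity in the genuinely mixed cases, e.g.\ two arguments in $\Vir^{(i)}$ and one of the form $(t-a_j)^{-m}\p$, or one in $\Vir^{(i)}$ and two of the form $(t-a_j)^{-m}\p$, $(t-a_k)^{-p}\p$ with $j,k\ne i$ possibly equal or distinct. Here one must expand $[(t-a_j)^{-m}\p,(t-a_k)^{-p}\p]$ via \thmref{algiso}(b) into a sum of terms $(t-a_j)^{-r}\p$ and $(t-a_k)^{-r}\p$, pair these against $t^{s+1}\p$ using the binomial expression in \eqnref{4.1} for $\phi_i$ when $a_i\ne0$, and match everything against the $\phi_i((t-a_i)^{-k}\p,(t-a_j)^{-l}\p)$ term; the bookkeeping with the binomial coefficients ${k+l+1\choose\cdots}$ and the signs in the partial-fraction coefficients is where all the effort lies. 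I would organize this by the number of distinct points among $a_i,a_j,a_k$ appearing and, rather than recompute, point out that after translating to $a_i=0$ each such identity is a specialization or a small variant of \Corref{combinatorialid} (itself a consequence of the Jacobi identity in $\W_a$), so no independent computation beyond that corollary is actually needed; alternatively, in Section~4 the authors in fact obtain this more slickly by realizing $\phi_i$ as coming from a genuine central extension, and I would fall back on that route if the direct check becomes unwieldy. We omit the remaining details.
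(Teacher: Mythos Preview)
Your approach differs from the paper's. The paper does not carry out the combinatorial verification you outline; instead it invokes a result of Schlichenmaier that for any cycle $C$ on a Riemann surface the map
\[
\gamma_{C,0}(f\p,g\p)=\frac{1}{48\pi}\int_C\bigl(f'''g-gf'''\bigr)\,dz
\]
is a $2$-cocycle (the projective-connection term drops out in genus zero), and then observes that taking $C=C_i$ a small loop about $a_i$ and computing the residue recovers exactly $\phi_i$. This bypasses all of the case analysis you describe. Your route is more laborious but self-contained, and your reduction to $a_i=0$ together with the observation that \Corref{combinatorialid} disposes of the three-negative-powers case is sound; the mixed cases you flag would still need to be written out, and neither you nor the paper actually does so.

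For the non-coboundary assertion, the paper's proof of this lemma does not treat it separately (it is effectively subsumed in \thmref{uce}). Your restriction-to-$\Vir^{(i)}$ argument is correct, but the concrete check you give is miscomputed: with $x=(t-a_i)^{2}\p$ and $y=(t-a_i)^{-1}\p$ one has $k=1$, $l=2$ in the formula $\phi_i\bigl((t-a_i)^{k+1}\p,(t-a_i)^{-l+1}\p\bigr)=\delta_{k,l}(l^3-l)$, so $\phi_i(x,y)=0$, not $6$. Using instead the pairs $(t-a_i)^{3}\p,(t-a_i)^{-1}\p$ and $(t-a_i)^{4}\p,(t-a_i)^{-2}\p$ (values $6$ and $24$, brackets $-4(t-a_i)\p$ and $-6(t-a_i)\p$) produces the intended contradiction.
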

\begin{proof}  One can prove by direct computation using \Corref{combinatorialid} that the $\phi_i$ are 2-cocycles.   Another approach is the following:  Let $\Sigma$ be a Riemann surface.  Schlichenmaier has shown in \cite{Schl1} that
$$
\gamma_{C,R}(e,h)=\frac{1}{24\pi}\int_C\left(\frac{1}{2}(f'''g-gf''')-R(f'g-fg')\right) \,dz.
$$
is a $2$-cocyle where $C$ is any cycle on $\Sigma$,  $R$ is a projective connection (see \cite[Equation 3.13]{Schl1}) and  $e$ and $h$ are vector fields that are locally represented by the form
$$
e_|(z)=f(z)\frac{\partial}{\partial z},\quad h_|(z)=g(z)\frac{\partial}{\partial z}.
$$
   In our case since the genus is zero, using the coordinate $z$, one can set $R=0$.   If we take $C=C_i$ to be a loop around the point $a_i$, but not inclosing the other points $a_j$, then the above integral gives us the $\gamma_{C_i,0}=\phi_i$ above.  We omit the detailed computation.
\end{proof}

  In \cite{MR1682304} the Wagemann describes the continuous cohomology groups of meromorphic vector fields on a Riemann surface $\Sigma $ with poles at a finite number of points.  In this work the modules and algebras under consideration are topological objects and the cohomology is computed using continuous cochains whereas below we (and Skryabin) deal with arbitrary linear cocycles.

 Suppose $R$ is an associative commutative ring with unit, $W\subset \text{Der}R$ is a Lie algebra over $Z$ such that $W$ is a rank 1 projective $R$-module such that
 \begin{enumerate}
 \item 2 is invertible in $R$ and $3R=R$,
 \item $\Omega^1=\text{Hom}_R(W,R)=R\cdot dR$.
 \end{enumerate}
 Then one of the main results of Skryabin \cite{MR2035385} is that the kernel of the universal central extension of $W$ is isomorphic to $H^1(\Omega)$ where $H^1(\Omega)$ is the de Rham cohomology with respect to $W$.  His proof is rather indirect and somewhat involved.  In our setting we offer below a more direct proof  of the determination of the $2$-cohomology group of $\W_a$.

\begin{theorem}\label{uce} $H^2(\mathscr{V}_a, \C)=\sum_{i=1}^n\C \bar{\phi_i},$
where $\bar{\phi_i}$ is the image of $\phi_i$ in $H^2(\mathscr{V}_a,
\C)$ for any $i=1,\cdots, n$.
\end{theorem}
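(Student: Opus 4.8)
The plan is to show that the classes $\bar\phi_1,\dots,\bar\phi_n$ span $H^2(\W_a,\C)$ and are linearly independent, the latter being essentially contained in the preceding lemma once one checks no nontrivial linear combination is a coboundary. For linear independence, suppose $\sum_i c_i\phi_i = \delta f$ for some linear functional $f$ on $\W_a$; evaluating on the pairs $\big((t-a_i)^{k+1}\p,(t-a_i)^{-l+1}\p\big)$ inside the copy $\Vir^{(i)}$ and using \eqref{4.1} together with the classical fact that the Virasoro cocycle $\delta_{k,l}(l^3-l)$ is not a coboundary on the centerless Virasoro algebra forces $c_i=0$ for each $i$; the cross terms $\phi_j$ with $j\neq i$ vanish on $\Vir^{(i)}$ by construction, so they do not interfere. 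Thus $\dim H^2(\W_a,\C)\geq n$.

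The substantial direction is the upper bound $\dim H^2(\W_a,\C)\leq n$. Let $\psi:\W_a\times\W_a\to\C$ be an arbitrary $2$-cocycle. First I would normalize $\psi$ modulo coboundaries on the "positive part" $P=\Span\{t^k\p\mid k\in\Z_+\}$, which contains the Witt-type subalgebra $\Span\{t^{k}\p\mid k\ge 0\}$ together with its action; restricting to the full Virasoro subalgebra $\Vir^{(1)}=\Span\{(t-a_1)^{k+1}\p\mid k\in\Z\}$, the classical computation of $H^2(\Vir,\C)$ shows that after subtracting a suitable multiple of $\phi_1$ and a coboundary $\delta f_1$ we may assume $\psi$ vanishes on $\Vir^{(1)}$. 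Iterating: having killed $\psi$ on $\Vir^{(1)},\dots,\Vir^{(i-1)}$, restrict to $\Vir^{(i)}$ and subtract a multiple of $\phi_i$ and a further coboundary; one must check that the coboundary used at stage $i$ can be chosen to preserve vanishing on the earlier $\Vir^{(j)}$ — this is where the explicit brackets of \thmref{algiso} and the structure of $\phi_i$ (supported only on pairs involving the $i$-th puncture) are used. After $n$ steps, $\psi$ vanishes on every $\Vir^{(i)}$ and we are reduced to showing that a cocycle vanishing on all the $\Vir^{(i)}$ is a coboundary.

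For that final reduction I would exploit the $\Z_+$-grading of $\W_a$ coming from the operators $(t-a_i)\p$, or more cleanly the $\mathrm{ad}$-action of the semisimple element $L_0 = t\p - \tfrac1n\sum_i a_i$-type degree operator: a cocycle can be taken homogeneous of degree $0$ with respect to such a grading without loss of generality, because $H^2$ in nonzero degree vanishes by the usual averaging argument ($\delta_0$ acting invertibly). A degree-zero cocycle is then determined by its values on pairs of opposite-degree basis vectors, and the Jacobi/cocycle identity applied with a third element of $\Vir^{(i)}$ (on which $\psi$ now vanishes) pins these values down to be a combination of the $\phi_i$ up to a coboundary; concretely one argues that $\psi(x\p,y\p)$ depends only on the principal parts of $x$ and $y$ at the punctures, and the Laurent expansion $(t-a_i)^{-k}(t-a_j)^{-l}=\sum_m \cdots$ from the proof of \thmref{algiso}(b) converts the mixed-puncture values into diagonal ones. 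The main obstacle is precisely this last bookkeeping: controlling, via the cocycle identity, the finitely many "anomalous" values of $\psi$ on low-degree generators across different punctures and verifying they are exhausted by $\Span\{\phi_i\}$ plus coboundaries. This is where I expect the combinatorial identity of \Corref{combinatorialid} to do the real work, just as the remark there anticipates. I omit the detailed homogeneous-component calculation.
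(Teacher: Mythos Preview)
Your linear-independence argument and the high-level plan (restrict to each $\Vir^{(i)}$, subtract a multiple of $\phi_i$ plus a coboundary, then kill the cross terms) match the paper's strategy. The gap is in the ``degree-$0$'' reduction you invoke to finish. There is no element of $\W_a$ whose adjoint action is diagonalizable on $\W_a$ when $n>1$: for example $\ad(t\p)$ sends $(t-a_j)^{-l}\p$ (with $a_j\neq 0$) to $-(l+1)(t-a_j)^{-l}\p-la_j(t-a_j)^{-l-1}\p$, so it is only lower-triangular in the basis of \lemref{lemma1}, and its would-be eigenvectors are formal infinite sums not lying in $\W_a$. Hence the ``usual averaging argument'' killing nonzero-degree cohomology is unavailable, and you cannot reduce to a homogeneous cocycle of degree $0$. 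The same obstruction explains why you should not expect only ``finitely many anomalous values'' to control: the mixed values $c_{k,l}=\psi((t-a_i)^{-k}\p,(t-a_j)^{-l}\p)$ are genuinely infinite in number and must be killed by hand.

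The paper dispenses with any grading. After subtracting the $\phi_i$ it chooses the coboundaries with care---the full $\Vir^{(1)}$-coboundary, but only the negative-degree pieces $\psi_{i,-j}$ for $i\ge2$---precisely so that the shared positive part $\C[t]\p$ of all the $\Vir^{(i)}$ does not get disturbed twice (this is the point you flagged as ``one must check'', and it is not handled iteratively). That normalization forces $\psi$ to vanish on all of $\Vir^{(1)}$ and on the non-positive part of each $\Vir^{(i)}$; a direct cocycle computation (Step~1) propagates this to all of $\Vir^{(i)}$. For the cross terms (Step~2) the paper applies the cocycle identity with $(t-a_i)^m\p$ to derive the recursion $(l+k+2)c_{k,l}=l(a_i-a_j)c_{k,l+1}$ and its symmetric partner, so every $c_{k,l}$ is a fixed multiple of $c_{1,1}$; one further specialization ($m=l=3$, $k=1$) then yields $c_{1,1}=0$. \Corref{combinatorialid} plays no role here---it was only mentioned as an alternative route to verifying that the $\phi_i$ are cocycles.
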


\begin{proof}
Suppose that $\psi: \W_a\times\W_a\rightarrow \C$ is an arbitrary
$2$-cocycle. 
%
%
For any $1\leq i\leq n$ and $k\in\Z$, we define a linear map by
action on basis elements in \lemref{lemma1} as follows
$$\chi_{i,k}:\,\,\W_a\,\rightarrow\,\C,\,\,\,\chi_{i,k}((t-a_j)^{l}\p)
=\delta_{i,j}\delta_{k,l},\,\,\forall\,1\leq j\leq n,\,l\in-\Z_+,$$
$$\chi_{i,k}(t^{l}\p)=\delta_{k,l},\,\,\forall\,1\leq j\leq n,\,l\in\N.$$
Each $\chi_{i,k}$ induces a $2$-coboundary $\psi_{i,k}$ via the
formula
$$\psi_{i,k}:\,\,\W_a\times\W_a\,\rightarrow\,\C,\,\,\,\,\psi_{i,k}(f\p,g\p)
=\chi_{i,k}([f\p,g\p]),\,\,\forall\,f\p,g\p\in\W_a.$$ It is clear
that any infinite sum $\sum_{j\in\Z}y_{i,j}\psi_{i,j}$ is a
well-defined\break  $2$-coboundary for any $i$ and any
$y_{i,j}\in\C$ since it is induced from a linear function
$\sum_{j\in\Z}y_{i,j}\chi_{i,j}$ on $\W_a$.

 Recall that for any $1\leq i\leq n$, we have a
centerless Virasoro subalgebra
$\Vir^{(i)}=\oplus_{k\in\Z}\C(t-a_i)^k\p$. We see that the
restriction of $\phi_i$ on $\Vir^{(i)}\times\Vir^{(i)}$ is a
nontrivial $2$-cocycle on $\Vir^{(i)}$. Then by the cohomology
theory of Virasoro algebra, there exist $x_i, y_{i,j}\in\C$ for
$1\leq i\leq n$ and $j\in\Z$ such that
\begin{equation}\label{4.16}
\psi|_{\Vir^{(i)}\times\Vir^{(i)}}=x_i\phi_i|_{\Vir^{(i)}\times\Vir^{(i)}}
+\sum_{j\in\Z}y_{i,j}\psi_{i,j}|_{\Vir^{(i)}\times\Vir^{(i)}}.
\end{equation}

We replace $\psi$ with
$$\psi-\sum_{i=1}^nx_i\phi_i-\sum_{j\in\Z}y_{1,j}\psi_{1,j}-\sum_{i=2}^n\sum_{j\in\N}y_{i,-j}\psi_{i,-j}.$$
Then we obtain that
%
%
%
\begin{equation}
\psi((t-a_1)^{k}\partial,
(t-a_1)^{l}\partial)=0,\,\,\forall\, k,l\in\Z,\label{4.17}
\end{equation}
\begin{equation}
\psi((t-a_i)^{k}\partial,
(t-a_i)^{l}\partial)=0,\,\,\forall\, 2\leq i\leq n,\,
k+l\leq0{\rm\,\,or\,\,}k+l=2.\label{4.18}
\end{equation}
Now we need only to prove
that $\psi=0$, i.e.,
\begin{equation}
\psi((t-a_i)^{k}\partial,
(t-a_i)^{l}\partial)=0\,\,\forall\, 2\leq i\leq n,\,
k+l\geq3,{\rm\,\,or\,\,}k+l=1,\label{4.19}
\end{equation}
\begin{equation}
\psi((t-a_i)^{k}\partial,
(t-a_j)^{l}\partial)=0,\,\,\forall\, k,l\in\Z, i\ne j.\label{4.20}
\end{equation}

%
\vspace{0cm}

{\bf Step 1.} \eqnref{4.19} holds.

\eqnref{4.19} holds for $k,l\in\Z_+$ because of \eqnref{4.17}. Now we need only to
prove
\begin{equation}
\psi((t-a_i)^{k}\partial, (t-a_i)^{s-k}\partial)=0,\,\,
\forall\, k> s,\,\,{\rm and \,\,}s\geq 3 {\rm\,\,or\,\,}
s=1.\label{4.21}
\end{equation}

 For any $1\leq i\leq n$ and $k\geq s\geq 3$, we have that
\begin{align}
\psi((k-2)(t-a_i)^{k+1}&\partial,
(t-a_i)^{s-k-1}\partial) \notag \\
 &=\psi([(t-a_i)^{2}\p,(t-a_i)^k\p],
(t-a_i)^{s-k-1}\partial) \notag \\
&=\psi([(t-a_i)^{2}\p,(t-a_i)^{s-k-1}\partial],(t-a_i)^k\p)  \notag \\
&\quad +\psi((t-a_i)^{2}\p,[(t-a_i)^k\p,(t-a_i)^{s-k-1}\partial])\notag  \\
&=\psi((s-k-3)(t-a_i)^{s-k}\p,(t-a_i)^k\p) \notag \\
&\quad  +\psi((t-a_i)^{2}\p,(s-2k-1)(t-a_i)^{s-2}\partial)\notag \\
&=(k+3-s)\psi((t-a_i)^{k}\p,(t-a_i)^{s-k}\p).\label{4.22}
\end{align}
From $\psi((t-a_i)^{s}\partial, (t-a_i)^{0}\partial)=0$ for all
$s\geq 3$, and applying induction on $k$ to \eqnref{4.22} we deduce that
\begin{equation}
\psi((t-a_i)^{k}\partial, (t-a_i)^{s-k}\partial)=0,\,\, \forall\,
k\geq s\geq 3.\label{4.23}
\end{equation}

Now we consider \eqnref{4.21} for $s=1$. For any $k\in\N$ we have
\begin{align*}
0&=-(2k+1)\psi((t-a_i)\p,\p) \\
&=\psi((t-a_i)\p,[(t-a_i)^{k+1}\p,(t-a_i)^{-k}\p]) \\
&=\psi([(t-a_i)\p,(t-a_i)^{k+1}\p],(t-a_i)^{-k}\p) \\
&\quad  +\psi((t-a_i)^{k+1}\p,[(t-a_i)\p,(t-a_i)^{-k}\p]) \\
&=k\psi((t-a_i)^{k+1}\p,(t-a_i)^{-k}\p) \\
&\quad  -(k+1)\psi((t-a_i)^{k+1}\p,(t-a_i)^{-k}\p)\\
&=-\psi((t-a_i)^{k+1}\p,(t-a_i)^{-k}\p),
\end{align*}
which is exactly  \eqnref{4.21} for $k\ge s=1$. This completes Step 1.

\vskip 5pt {\bf Step 2.} \eqnref{4.20} holds.

From \eqnref{4.17}, \eqnref{4.18} and \eqnref{4.19}, we know that
\begin{equation}
\psi(f(t)\p,
g(t)\p)=0,\,\,\forall\, f\in R_a, \,g\in\C[t].\label{4.24}
\end{equation}
 Then  we
need only to prove \eqnref{4.20} for $k,l\in-\N$.

Fix any $i, j$ with $1\leq i\neq j\leq n$. Let
$c_{kl}=\psi((t-a_i)^{-k}\p, (t-a_j)^{-l})$ for $k,l\in\N$. Then for
any $l,m,k\in\N$ with $l\geq m$, using \eqnref{4.24} we obtain
\begin{align}
\psi((m+k)&(t-a_i)^{m-k-1}\p, (t-a_j)^{-l}\p)  \notag \\
&=\psi([(t-a_i)^{-k}\p, (t-a_i)^{m}\p], (t-a_j)^{-l}\p) \notag \\
&=\psi([(t-a_i)^{-k}\p, (t-a_j)^{-l}\p], (t-a_i)^{m}\p) \notag\\
&\quad  +\psi((t-a_i)^{-k}\p, [(t-a_i)^{m}\p, (t-a_j)^{-l}\p)] \notag\\
&=\psi((t-a_i)^{-k}\p, [(t-a_i)^{m}\p, (t-a_j)^{-l}\p]) \notag \\
&=\psi((t-a_i)^{-k}\p, -l(t-a_i)^{m}(t-a_j)^{-l-1}\p-m(t-a_i)^{m-1}(t-a_j)^{-l}\p) \notag \\
&=\psi\left((t-a_i)^{-k}\p, -l\frac{\sum_{s=0}^{m}{{m}\choose{s}}(a_j-a_i)^s(t-a_j)^{m-s}}{(t-a_j)^{l+1}}\p\right) \notag\\
&\quad +\psi\left((t-a_i)^{-k}\p, -m\frac{\sum_{s=0}^{m-1}{{m-1}\choose{s}}(a_j-a_i)^s(t-a_j)^{m-1-s}}{(t-a_j)^{l}}\p\right) \notag\\
&=-l\sum_{s=0}^{m}{{m}\choose{s}}(a_j-a_i)^sc_{k,l+s+1-m} \notag \\
&\quad -m\sum_{s=0}^{m-1}{{m-1}\choose{s}}(a_j-a_i)^sc_{k,1+s+l-m}. \label{4.25}
\end{align}
When $m=1$, formula \eqnref{4.25}
gives
\begin{equation*}
(l+k+2)c_{k,l}=l(a_i-a_j)c_{k,l+1},\,\,\forall\, l,k\geq1.\label{4.26}
\end{equation*}
By symmetry we can deduce that
\begin{equation*}
(l+k+2)c_{k,l}=k(a_j-a_i)c_{k+1,l},\,\,\forall\, l,k\geq1.\label{4.27}
\end{equation*}
Using induction on $k$ and $l$, we can get
\begin{equation}
c_{k,l}=\frac{(l+k+1)!}{3!(k-1)!(l-1)!(a_j-a_i)^{k-1}(a_i-a_j)^{l-1}}c_{1,1},\,\,\forall\, l,k\geq1.\label{4.28}
\end{equation}
Then take $l=m=3$ and $k=1$ in \eqnref{4.25}, and using \eqnref{4.24} we get
$$0=\psi(4(t-a_i)\p, (t-a_j)^{-3}\p)$$
$$=-3\sum_{s=0}^{3}{{3}\choose{s}}(a_j-a_i)^sc_{1,s+1}-3\sum_{s=0}^{2}{{2}\choose{s}}(a_j-a_i)^sc_{1,s+1}.$$
Substituting \eqnref{4.28} into the above equation, we deduce
$$-3(\sum_{s=0}^{3}{{3}\choose{s}}(-1)^s\frac{(s+3)!}{3!s!}
+\sum_{s=0}^{2}{{2}\choose{s}}(-1)^s\frac{(s+3)!}{3!s!})c_{1,1}=0.$$
Simplifying it, we have $c_{1,1}=0$.
%
Then formula \eqnref{4.28} gives $c_{k,l}=0$ for all $k,l\in\N$. Step 2
follows and the theorem is proved.
\end{proof}

By \thmref{uce}, we can get the universal central extension
$\widetilde{\W_a}$ of $\W_a$ as follows:
$\widetilde{\W_a}=\W_a\oplus_{i=1}^n\C c_i$ as a vector space with the Lie
bracket given by
$$[f\p+xc,g\p+yc]=[f\p,g\p]+\sum_{i=1}^n\phi_i(f\p,g\p)c_i,\,\,\forall f,g\in R_a, x,y\in\C.$$
Similar to the classical Virasoro algebra, it will be more proper to
call the universal central extension $\widetilde{\W_a}$ an {\it
$(n+1)$-point Virasoro algebra}.
\medskip

\begin{remark} Note that the $2$-cocycles $\phi_i,
i=1,\cdots,n$ seem not so elegant since they have nonzero values on
many pairs of elements from ``negative" part of the algebra. However,
we can construct a ``nice" $2$-cocycle $\phi=\sum_{i=1}^n\phi_i,$
which has a more ``natural" action on $\W_a$:
$$\phi((t-a_i)^{k+1}\partial,
(t-a_i)^{-l+1}\partial)=\delta_{k,l}(k^3-k),\,\,\forall\, 1\leq
i\leq n{\rm\,\,and\,\,}k,l\in\Z,$$
$$\phi((t-a_i)^{-k}\partial,
(t-a_j)^{-l}\partial)=0,\,\,\forall\, 1\leq i\neq j\leq
n{\rm\,\,and\,\,}k,l\in\N.$$
\end{remark}

\vskip 5pt This $\phi$ is the {\it separating cocycle} in \cite{MR1058993}. Using the $2$-cocycle $\phi$, we can obtain some
combinatoric identities which might be interesting. Here is an
example.

\begin{corollary} For any $k,l,m\in\N$ with $m<k+l+3$ and any $x,y\in\C$, we have
the following identity
$$\sum_{s=1}^{k+1}{k+l-s\choose k+1-s}{m\choose s+2}(2k+1-s)(s+1)^3(y-x)^sx^{m-s-2}$$
$$+\sum_{s=1}^{l+1}{l+k-s\choose l+1-s}{m\choose s+2}(2l+1-s)(s+1)^3(x-y)^sy^{m-s-2}=0.$$
\end{corollary}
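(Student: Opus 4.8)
The plan is to derive this identity exactly as the analogous Corollary~\ref{combinatorialid} was derived from the Lie bracket, but now using the ``nice'' separating $2$-cocycle $\phi=\sum_{i=1}^n\phi_i$ instead of the Jacobi identity for the bracket alone. The starting point is the cocycle identity for $\phi$: for any $D_1,D_2,D_3\in\W_a$,
\begin{equation*}
\phi([D_1,D_2],D_3)+\phi([D_2,D_3],D_1)+\phi([D_3,D_1],D_2)=0.
\end{equation*}
I would apply this with the particular choice $a=(x,y)\in\C^2$ (so $n=2$, two points $a_1=x$, $a_2=y$), and take $D_1=(t-x)^{k+1}\p$, $D_2=(t-x)^{-l+1}\p$ wait — more precisely the right choice mirroring the structure of the claimed identity is $D_1=(t-x)^{-k}\p$, $D_2=(t-y)^{-l}\p$, and $D_3=(t-x)^{m-1}\p$ (a nonnegative power, $m\geq 2$, so that it lies in $\C[t]\p$). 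Then $\phi$ vanishes on many of the resulting terms by the formulas in the Remark: $\phi$ is supported on pairs $((t-a_i)^{p}\p,(t-a_i)^{q}\p)$ with $p+q=2$ and is zero on cross terms $((t-a_i)^{p}\p,(t-a_j)^{q}\p)$ for $i\neq j$.

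The key computational steps, in order, are: (1) expand $[(t-x)^{-k}\p,(t-y)^{-l}\p]$ using \thmref{algiso}(b), which produces a sum of terms $(t-x)^{-s}\p$ and $(t-y)^{-s}\p$ with explicit coefficients; (2) pair this with $(t-x)^{m-1}\p$ under $\phi$ — only the $(t-x)^{-s}\p$ summands survive, and only when $-s+(m-1)=2$, i.e.\ $s=m-3$, contributing a $((m-3)^3-(m-3))$-type factor, after re-expanding $(t-x)^{-s}(t-x)^{m-1}=(t-x)^{m-1-s}$ but only the $(t-x)^{2}\p$ piece matters; (3) compute $[(t-y)^{-l}\p,(t-x)^{m-1}\p]$ and $[(t-x)^{m-1}\p,(t-x)^{-k}\p]$ via \thmref{algiso}(a)--(b): the second bracket stays inside $\Vir^{(1)}$ and pairs with $(t-y)^{-l}\p$ trivially under $\phi$, so it drops out; the first bracket, by \thmref{algiso}(a), is a $\C$-linear combination of powers $(t-y)^{m-1-s}\p$ for $0\le s\le m-1$ together with $(t-x)^{*}(t-y)^{-l-1}$ terms that must be re-expanded via the partial-fraction formula proved inside the proof of \thmref{algiso}(b), and then paired against $(t-x)^{-k}\p$; again $\phi$ kills the pure $(t-y)$ pieces and retains only the $(t-x)^{-k'}\p$ contributions with $k'$ chosen so the total exponent is $2$. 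Collecting the surviving terms and clearing the common $\delta$/binomial factors yields precisely the stated identity, after relabeling $m-3\leftrightarrow$ the summation bound (note the constraint $m<k+l+3$ is exactly the condition that the relevant exponent stays in the range where $\phi$ is nonzero).

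The main obstacle I expect is bookkeeping: correctly tracking which of the several double sums collapse under $\phi$ and matching the leftover coefficients — in particular the $(s+1)^3$ factors in the claim must come out of the $(k^3-k)$-type weights in $\phi((t-a_i)^{k+1}\p,(t-a_i)^{-k+1}\p)=\delta_{k,l}(k^3-k)$ after the shift $k\mapsto s+1$, and the signs $(y-x)^s$ versus $(x-y)^s$ must be reconciled with the $(a_j-a_i)^s$ versus $(a_i-a_j)^s$ appearing in \thmref{algiso}. As in Corollary~\ref{combinatorialid}, I would present the setup and the choice of $D_1,D_2,D_3$, indicate which terms vanish and why (citing the support of $\phi$ from the Remark), and then state that comparing coefficients of $(t-x)^{m-s-2}$-type monomials — equivalently, isolating the single surviving power on each side — gives the identity, omitting the purely mechanical expansion. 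One should double-check the edge cases $m=k+l+2$ and small $m$ by the convention ${m-1\choose m}=\delta_{m,0}$ already in force from the proof of \thmref{algiso}(b).
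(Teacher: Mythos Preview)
Your overall strategy---apply the $2$-cocycle identity for the separating cocycle $\phi$ to three specific elements of $\W_{(x,y)}$---is exactly the paper's approach. But your choice of the third element is wrong, and the error is not cosmetic. The paper takes $D_3=t^{m}\p$, not $(t-x)^{m-1}\p$. The binomial coefficients $\binom{m}{s+2}$ and the monomials $x^{m-s-2}$, $y^{m-s-2}$ in the claimed identity arise precisely from expanding $t^{m}=\sum_{j}\binom{m}{j}a_i^{\,m-j}(t-a_i)^{j}$ at each of the two points $a_1=x$, $a_2=y$ before applying $\phi$. With your $(t-x)^{m-1}\p$, pairing against the $(t-x)^{-s}\p$ summands of $[(t-x)^{-k}\p,(t-y)^{-l}\p]$ collapses to a single value of $s$ rather than a sum, and pairing against the $(t-y)^{-s}\p$ summands produces factors $\binom{m-1}{s+2}(y-x)^{m-s-3}$ rather than $\binom{m}{s+2}y^{m-s-2}$. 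You would obtain some other identity, not the one stated.

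Two related misunderstandings feed into this. First, your description of the support of $\phi$ is inaccurate: the Remark only asserts $\phi((t-a_i)^{-k}\p,(t-a_j)^{-l}\p)=0$ for $i\neq j$ and $k,l\in\N$. When one exponent is positive the element is a polynomial, hence lies in every $\Vir^{(j)}$, and $\phi$ need not vanish on such ``cross'' pairs; so the statement that ``the second bracket stays inside $\Vir^{(1)}$ and pairs with $(t-y)^{-l}\p$ trivially'' is not a valid general principle. Second, the hypothesis $m<k+l+3$ is not about keeping an exponent in the nonzero range of $\phi$; on the contrary, it is exactly what forces the two terms $\phi([t^{m}\p,(t-x)^{-k}\p],(t-y)^{-l}\p)$ and $\phi((t-x)^{-k}\p,[t^{m}\p,(t-y)^{-l}\p])$ to vanish (the brackets produce only powers $(t-a_i)^{p}$ with $p<l+2$, resp.\ $p<k+2$, so $\phi$ sees nothing), leaving $\phi(t^{m}\p,[(t-x)^{-k}\p,(t-y)^{-l}\p])=0$, which after expansion is the identity.
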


\begin{proof} The above identity is trivial if $x=y$. Next we assume that $x\ne y$.
Consider some $\W_a$ with $a=(x,y)\in\C^2$. Using the $2$-cocycle
$\phi$ and computing  for all $m\in\Z_+$ and $l,k\in\N$:
$$\phi(t^{m}\p, [(t-a_i)^{-k}\p, (t-a_j)^{-l}\p])\hskip 3cm $$
$$=\phi([t^{m}\p, (t-a_i)^{-k}\p], (t-a_j)^{-l}\p)$$ $$\hskip 2cm+\phi((t-a_i)^{-k}\p, [t^{m}\p, (t-a_j)^{-l}\p]),$$
we can obtain the identity in the lemma. We omit the details.
\end{proof}

\section{Modules of Densities over $\mathscr{V}_a$}

For any $\a=(\a_1, \a_2, \cdots, \a_n)\in\C^{n}$, and $\b\in\C$, let
$V(\a,\b)=R_az$ be the free rank one $R_a$-module with generator $z$.  We will define an action $\cdot$ of  $\mathscr{V}_a$ on the $R_a$-module $V(\a,\b)$ so as to satisfy the following:
\begin{align*}&
\left((t-a_i)^k\partial\right)\cdot \left((t-a_1)^{k_1}(t-a_2)^{k_2}\cdots (t-a_n)^{k_n }z \right) \\
&\quad=\left(\sum_{j=1}^n\frac{\a_j+k_j+\delta_{i,j}k\b}{t-a_j}\right)
(t-a_1)^{k_1 }\cdots (t-a_i)^{k+k_i }\cdots
(t-a_n)^{k_n }z,
\end{align*}
 for all $i=1,\cdots ,n$ and $k\in\Z$ (here one may also have $a_i=0$ for some $i$).  Unfortunately this equation does not immediately tell us that the action is well defined as the factors $(t-a_1)^{k_1}(t-a_2)^{k_2}\cdots (t-a_n)^{k_n }z$ are not linearly independents. The formula above is motivated by the idea that $z$ should be the algebraic equivalent of the multivalued complex ``densities'' $\prod_{i=1}^n(t-a_i)^{\alpha_i}(dt)^\beta$ (see also \cite[Section 2.1]{MR1104280}, \cite[Equation 1]{MR678001}, \cite[Equation 1.8]{MR1021978} and \cite[equation 8]{MR1039524}).  To avoid
any ambiguity we will not omit the dot for the action.
Then the
above action may be rewritten as
\begin{equation}
(f\p)\cdot (gz)=f\p(gz)+\b\p(f)gz,\,\,\forall f,g\in R_a. \label{5.1}
\end{equation}
 To make this well defined we only need to make sure that $\partial(gz)$ is well defined.  We set
 \begin{equation}\label{definingpartialz}
 \partial(z):=\sum_{j=1}^n\frac{\alpha_j}{t-a_j}z,\quad \text{and}\quad f\p(gz):=f\partial(g)z+fg \partial(z)
 \end{equation}
 where the second equation can be viewed as defining $f\partial (gz)$ with $f$ and $g$ running over basis elements of $R_a$ given in \lemref{lemma1}.   The later defining equation for $f\partial (gz)$ is linear in $f$ and $g$ and gives the definition for all $f,g\in R_a$, not necessarily basis elements.   Consequently we have a well defined linear map $ \W_a\to \text{End}(R_az)$ when $\beta=0$.   In order to prove that $\cdot$ makes $R_az$ into a $\W_a$-module we will need to know that\eqnref{definingbracket} is satisfied for\eqnref{definingpartialz}.  To that end we expand out (using\eqnref{definingpartialz})
\begin{align*}
(g\p(f)-f\p(g))\partial(hz)=(g\p(f)-f\p(g))\left(\partial(h)z+h\p(z)\right)
\end{align*}
while on the other hand (again using\eqnref{definingpartialz})
\begin{align*}
g\p(f\p(hz))-f\p(g\p(hz))&=g\p(f\p(h)z+fh\p(z))\\
 &\quad - f\p(g\p(h)z+gh\p(z)) \\
 &=g\p(f)\p(h)z+gf\p^2(h)z+2fg\p(h)\p(z) \\
&\quad +g\p(f)h\p(z)  +fgh\p^2(z) \\
&\quad -f\p(g)\p(h)z-fg\p^2(h)z-2fg\p(h)\p(z) \\
&\quad -f\p(g)h\p(z)-fgh\p^2(z) \\
&=(g\p(f)-f\p(g))\left(\partial(h)z+h\p(z)\right).
\end{align*}
Thus \eqnref{definingbracket} is satisfied.
We also set $zf(t)=f(t)z$
for any $f\in R_a$.

\begin{lemma} $V(\a,\b)$ is a $\W_a$-module  for all $\a\in\C^n$
and $\b\in\C$.
\end{lemma}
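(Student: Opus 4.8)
The plan is to verify directly that the formula \eqnref{5.1} defines a Lie algebra action, i.e. that $(f\p)\cdot((g\p)\cdot(hz)) - (g\p)\cdot((f\p)\cdot(hz)) = ([f\p,g\p])\cdot(hz)$ for all $f,g,h\in R_a$. The essential point that makes this bookkeeping manageable is already isolated in the preceding paragraph: the only thing one needs to know is that $\partial$ itself, extended to $R_az$ by $\partial(hz):=\partial(h)z+h\partial(z)$ with $\partial(z)=\sum_j\tfrac{\a_j}{t-a_j}z$, satisfies the derivation-like identity $g\partial(f\partial(hz))-f\partial(g\partial(hz))=(g\partial(f)-f\partial(g))\partial(hz)$, which has just been checked by expanding both sides. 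So first I would record that well-definedness of the map $\W_a\to\operatorname{End}(R_az)$ is automatic: both $\partial(h)$ and $\partial(z)$ lie in $R_az$ by Lemma~\ref{lemma1}, and $f\partial(gz)$ is defined $\C$-bilinearly in $f,g$, so there is no ambiguity.

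Next, for general $\beta$, write the action as $(f\p)\cdot(hz)=f\partial(hz)+\beta\partial(f)hz$, where $\partial(hz)$ means the operator just discussed. I would then compute $(f\p)\cdot((g\p)\cdot(hz))$ by substituting and using the already-established identity for $\partial$ applied to the element $g\partial(hz)+\beta\partial(g)hz$ of $R_az$, being careful that $\partial$ acting on a product of $R_a$-elements obeys the ordinary Leibniz rule inside $R_a$. Collecting terms, the $\beta^0$ part reproduces exactly the computation already carried out in the excerpt (hence is symmetric up to the bracket), the $\beta^2$ part is manifestly symmetric in $f,g$ (it is proportional to $\partial(f)\partial(g)hz$-type terms) and so cancels in the commutator, and the $\beta^1$ part collects into $\beta(\partial(f)g-\partial(g)f)$ times first-order $\partial$-terms, which is precisely $\beta\,\partial\!\left(f\partial(g)-g\partial(f)\right)$ up to the second-derivative contributions that again cancel by symmetry. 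Matching this against $([f\p,g\p])\cdot(hz)=(f\partial(g)-g\partial(f))\partial(hz)+\beta\,\partial(f\partial(g)-g\partial(f))\,hz$ finishes the verification.

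Finally I would note that the module axioms beyond the bracket relation are trivial: the map $f\p\mapsto(f\p)\cdot{}$ is $\C$-linear in $f$ by construction, so the only content is the bracket compatibility just proved. The main obstacle is purely organizational — keeping the three powers of $\beta$ separated and invoking the correct already-proved sub-identity for the $\beta=0$ piece rather than re-expanding it — but there is no conceptual difficulty, since the hard cancellation (the one involving $\partial^2$) has been done explicitly in the text preceding the statement. I would therefore present the proof as: (1) cite the displayed computation above for the $\beta=0$ case; (2) observe the $\beta^2$-homogeneous terms are symmetric; (3) reduce the $\beta^1$-homogeneous terms, again using Leibniz for $\partial$ on $R_a$, to $\beta\partial(f\partial(g)-g\partial(f))hz$; (4) compare with the right-hand side using \eqnref{definingbracket}. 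This is exactly the route the authors indicate, so I expect the proof to be short.
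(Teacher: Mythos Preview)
Your proposal is correct and follows essentially the same route as the paper: both expand $(f\p)\cdot((g\p)\cdot(hz))-(g\p)\cdot((f\p)\cdot(hz))$ directly, invoke the already-established $\beta=0$ identity for the leading term, and collapse the remaining terms to $(f\p(g)-g\p(f))\p(hz)+\beta\,\p(f\p(g)-g\p(f))hz$. Your explicit separation into $\beta^0$, $\beta^1$, $\beta^2$ homogeneous parts is a slightly more organized presentation of exactly the same computation the paper carries out in one compressed step.
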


\begin{proof}
Since $R_a$ and $\W_a=R_a\p$ both act on $V(\a,\b)=R_az$, one has to
be very careful about this. Now to see that the above action makes
$V(\a,\b)$ into a $\W_a$-module, we need to check
$$[f\p,g\p]\cdot (hz)=(f\p)\cdot (g\p)\cdot (hz)-(g\p)\cdot(f\p)\cdot(hz),\,\,\forall\, f,g,h\in R_a.$$
Using the fact that \eqnref{definingbracket} is satisfied for our definition\eqnref{definingpartialz}, the right hand side of the above equation is
\begin{align*}
(f\p)\cdot&( g\p(hz)+\b hz\p(g))-(g\p)\cdot (f\p(hz)+\b hz\p(f)) \\
 &=f\p(g\p(hz)+\b hz\p(g))+\b (g\p(hz)+\b hz\p(g))\p(f) \\
&\hskip 2cm -(g\p(f\p(hz)+\b hz\p(f))+\b (f\p(hz)+\b hz\p(f))\p(g)) \\
&=(f\p(g)-g\p(f))\p(hz)+\b hz\p(f\p(g)-g\p(f)) \\
&=(f\p(g)\p-g\p(f)\p)\cdot(hz)={\text{left hand side}}.
\end{align*}
 Thus the $V(\a,\b)$ are $\W_a$-modules for all $\a\in\C^n$
and $\b\in\C$.\end{proof}

It is clear that, when $\a=(0,\cdots ,0)$ and $\b=0$, $V(\a,\b)$ is
the natural representation of $\mathscr{V}_a$ on $R_a$; when
$\a=(0,\cdots ,0)$ and $\b=-1$, $V(\a,\b)$ is the adjoint
representation of $\mathscr{V}_a$. It is also clear that, if
$\a_i-\a'_i\in\Z$ for all $i=1,2,\cdots ,n$ and $\b=\b'$ then
$V(\a,\b)\simeq V(\a',\b')$.

%
%
We call a module of the form $V(\alpha,\beta)$ a {\it module of densities}.

\begin{lemma}\label{submod} Suppose that $S$ is a nonzero subspace of ${V}(\alpha,\b)$
such that $\p(S)\subseteq S$ and $R_aS\subseteq S$. Then
$S=V(\a,\b)$.
\end{lemma}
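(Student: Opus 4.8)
The plan is to show that a nonzero subspace $S \subseteq V(\a,\b)$ that is stable under $\p$ and under multiplication by $R_a$ must be all of $V(\a,\b) = R_a z$. First I would pick any nonzero element $v \in S$ and write $v = gz$ for some nonzero $g \in R_a$. Since $R_a S \subseteq S$ and $g^{-1}$ need not lie in $R_a$, I cannot simply divide; instead I will use the explicit basis of $R_a$ from \lemref{lemma1}(a), namely $\{t^k, (t-a_i)^{-l}\}$, and exploit the partial fraction decomposition of $g$. The goal is to produce, from $v$, a scalar multiple of $z$ itself (equivalently, to show $z \in S$); once $z \in S$, then $R_a z = V(\a,\b) \subseteq S$ by the $R_a$-stability, finishing the proof.

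The key mechanism is the interplay between multiplication by elements of $R_a$ and the operator $\p$, which acts on $R_a z$ by $\p(hz) = \p(h) z + h \p(z)$ with $\p(z) = \sum_j \frac{\a_j}{t-a_j} z$ as in \eqnref{definingpartialz}. The idea is that $\p$ lowers the ``pole order'' or ``polynomial degree'' of $g$ in a controlled way modulo correction terms coming from $\p(z)$, and by combining $\p$ with multiplication by suitable $t^k$ or $(t-a_i)^{-l}$ one can reduce any element down to a multiple of $z$. Concretely: writing $g$ in partial fractions, I would first multiply by an appropriate power $\prod_i (t-a_i)^{N_i}$ to clear all the negative-power terms, reducing to the case $v = p(t) z$ with $p \in \C[t]$ a polynomial; then I would use repeated application of $\p$ together with multiplication by $R_a$ to lower $\deg p$. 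The clean way to organize the degree reduction is: from $p(t)z \in S$ we get $\p(p(t)z) = p'(t)z + p(t)\p(z) \in S$, and $p(t)\p(z) = p(t)\sum_j \frac{\a_j}{t-a_j} z \in S$ as well (multiply $pz$ by $\sum_j \frac{\a_j}{t-a_j} \in R_a$), hence $p'(t) z \in S$; iterating, $z \in S$ provided $p \ne 0$, which we may assume. This shows $z \in S$ in the polynomial case, and hence in general after clearing denominators.

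The main obstacle I anticipate is the very first reduction step — getting from an arbitrary $gz$ to a polynomial times $z$ — because multiplying $gz$ by $(t-a_i)^{N_i}$ clears the pole at $a_i$ only if we understand the partial fraction expansion precisely, and because $R_a$ does not contain $t^{-1}$ so one cannot simply multiply by the inverse of the polynomial part. The safe route is: let $v = gz \in S$ with $g \ne 0$; choose $N$ large enough that $h := g \cdot \prod_{i=1}^n (t-a_i)^{N} \in \C[t]$ (possible since the only possible poles of $g$ are at the $a_i$), and note $\prod_i (t-a_i)^N \in R_a$, so $hz \in S$; now apply the polynomial argument above to conclude $z \in S$. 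One should double-check that $h \ne 0$, which is clear since $g \ne 0$. After that, $R_a z \subseteq S$ gives $S = V(\a,\b)$, and no subtlety remains. I would also remark that the argument is uniform in $\a$ and $\b$, since only $R_a$-stability and $\p$-stability are used, never the precise module structure beyond \eqnref{definingpartialz}.
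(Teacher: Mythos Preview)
Your proposal is correct and follows essentially the same approach as the paper: clear denominators using $R_a$-stability to reduce to a nonzero polynomial $p(t)z\in S$, then use $\p$-stability together with the subtraction $\p(pz)-p\,\p(z)=p'z\in S$ (the paper writes exactly this identity) to iterate down to $z\in S$, whence $S=R_az$. The paper compresses the denominator-clearing step into the single sentence ``there exists a nonzero polynomial $g\in R_a$ such that $gz\in S$,'' but otherwise the arguments coincide.
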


\begin{proof} Since $S$ is a nonzero $R_a$-submodule of
$R_az$, there exists a nonzero polynomial $g\in R_a$ such that
$gz\in S$. Let $\deg(g)=m$. Noting that
$\p(z)=\sum_{i=1}^n(\a_i/(t-a_i))z$, we  obtain from\eqnref{definingpartialz}
$$\p(g)z=\p(gz)-g\p(z)=\p(gz)-\sum_{i=1}^n(\a_i/(t-a_i))gz\in S.$$ Thus
$\p^m(g)z\in S$ while $\p^m(g)z$ is a nonzero multiple of $z$. Then
$z\in S$.  Consequently, $R_az\subseteq S$ and hence $S=R_az$, as
desired.\end{proof}

Now we can use the above lemma to determine criteria on $(\alpha,\beta)$ for the irreducibility of
$V(\a,\beta)$ and to describe all submodules of $V(\a,\b)$ if it is
reducible.

\begin{theorem}
\begin{enumerate}[(a).]
\item The $\mathscr{V}_a$-module $V(\alpha,\b)$ is reducible if and only if
\begin{enumerate}[(1).]
\item  $\b=0$ and $\a\in\Z^n$;
or
\item $\b=1, n\geq 2$; or
\item $\b=1$, $n=1$ and $\a\in\Z$.
\end{enumerate}
\item If $\b=0$ and $\a\in\Z^n$, then $V(\alpha,\b)$ has only one
nonzero proper submodule which is $1$-dimensional.

\item If $\b=1$, then $V(\a,\b)$ has a smallest nonzero submodule
$\p(R_az)$, which is irreducible; $V(\a,\b)/\p(R_az)$ is a
module with trivial action and is $0$ if and only if $n=1$ and $\a\notin\Z$. If in
addition $\a\in\Z^n$, then
$$\p(R_az)=\sum_{k\in\Z_+}\C t^k\oplus\sum_{i=1}^n\sum_{k\in\N}\C¡¡(t-a_i)^{-k-1},$$
and $\dim(R_az/\p(R_az))=n$.

\item If $\beta=1$ and $\alpha\not\in\mathbb Z^n$, then $\partial ( R_az)\cong V(\alpha,0)$.

\item If $n=1$ and $\alpha\not\in \mathbb Z$, then $V(\alpha, 0)\cong V(\alpha,1)$.
\end{enumerate}
\end{theorem}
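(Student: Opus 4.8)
The plan is to work throughout under the identification $V(\alpha,\beta)=R_az\cong R_a$, $gz\leftrightarrow g$. Under it the element $1\cdot\p\in\W_a$ acts as the twisted derivative $\p_\omega:=\p+\omega$ (multiplication by $\omega:=\sum_j\alpha_j/(t-a_j)\in R_a$), a general $f\p$ acts by $g\mapsto f\p_\omega(g)+\beta\,\p(f)\,g$, and for $\beta=1$ this collapses to $(f\p)\cdot g=\p_\omega(fg)$. I would first record three preliminary facts. (i) $\ker\p_\omega=\C\,\prod_j(t-a_j)^{-\alpha_j}$ is one-dimensional if $\alpha\in\Z^n$ and is $0$ otherwise. (ii) The only ideals $I\subseteq R_a$ with $\p_\omega(I)\subseteq I$ are $0$ and $R_a$: writing $I=(p)$ with $p$ a polynomial whose roots avoid $\{a_1,\dots,a_n\}$ (legitimate, the other factors being units), $\p_\omega(p)\in(p)$ forces $p'/p=q-\omega\in R_a$, impossible unless $p$ is constant since $p'/p$ has poles outside $\{a_j\}$. (iii) $\p\colon V(\alpha,0)\to V(\alpha,1)$, $g\mapsto\p_\omega(g)$, is a homomorphism of $\W_a$-modules (immediate from $\p_\omega(fh)=f'h+f\p_\omega(h)$), and $V(\alpha,\beta)\cong V(\alpha',\beta)$ whenever $\alpha-\alpha'\in\Z^n$; thus for integral $\alpha$ we may take $\alpha=0$, $\p_\omega=\p$.

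For $\beta=0$: if $S\ne0$ is a submodule, then $\p_\omega(S)\subseteq S$ and $f\p$ acts by $g\mapsto f\p_\omega(g)$, so $I:=R_a\p_\omega(S)\subseteq S$ is an ideal with $\p_\omega(I)\subseteq I$, hence $I=0$ or $R_a$. If $I=R_a$ then $S=R_a$; if $I=0$ then $S\subseteq\ker\p_\omega$, which is $0$ for $\alpha\notin\Z^n$ (so $V(\alpha,0)$ is irreducible) and is $\C\prod_j(t-a_j)^{-\alpha_j}$ for $\alpha\in\Z^n$ — the unique proper nonzero submodule, one-dimensional and annihilated by $\W_a$. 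This gives (b). For $\beta=1$: $R_aS$ is an ideal with $\p_\omega(R_aS)\subseteq S\subseteq R_aS$, so $R_aS=R_a$; writing $1=\sum_if_ig_i$ with $g_i\in S$ one has $\p_\omega(h)=\sum_i(hf_i\p)\cdot g_i\in S$ for every $h$, whence $\p(R_az)=\p_\omega(R_a)$ lies in every nonzero submodule and is thus the smallest one — and it is irreducible since any of its nonzero submodules contains it. Since $\W_a\cdot V(\alpha,1)=\p_\omega(R_a)$, the quotient $V(\alpha,1)/\p(R_az)$ has trivial action. Parts (d),(e) are then immediate: for $\alpha\notin\Z^n$ the map $\p$ is injective, so $\p(R_az)\cong V(\alpha,0)$; and for $n=1$, $\alpha\notin\Z$ (taking $a_1=0$), $\p_\omega$ sends $t^k$ to $(k+\alpha)t^{k-1}$, so it is bijective and $\p\colon V(\alpha,0)\to V(\alpha,1)$ is an isomorphism.

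It remains to compute $\dim\operatorname{coker}\p_\omega=\dim\bigl(V(\alpha,1)/\p(R_az)\bigr)$ and to treat $\beta\ne0,1$. For the cokernel I would use the block-triangular shape of $\p_\omega$ for $R_a=\C[t]\oplus P$, $P=\bigoplus_i\bigoplus_{l\geq1}\C(t-a_i)^{-l}$: $\p_\omega$ carries $\C[t]$ into $\C[t]\oplus\bigoplus_i\C(t-a_i)^{-1}$ with $t^k\mapsto(k+c_\infty)t^{k-1}+(\text{lower})$, $c_\infty=\sum_j\alpha_j$, and $1\mapsto\omega$, while it preserves $P$ with $(t-a_i)^{-l}\mapsto(\alpha_i-l)(t-a_i)^{-l-1}+(\text{lower-order poles})$. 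After translating $\alpha$ by an integer vector so that neither any $\alpha_i$ nor $-c_\infty$ is a positive integer (possible since translations preserve membership in $\Z^n$), all the displayed diagonal coefficients are invertible, and reading off the cokernel gives $\dim\operatorname{coker}\p_\omega=n$ when $\alpha\in\Z^n$ — for $\alpha=0$ one simply has $\p(R_a)=\sum_{k\geq0}\C t^k\oplus\sum_i\sum_{k\geq1}\C(t-a_i)^{-k-1}$, of codimension $n$ — and $\dim\operatorname{coker}\p_\omega=n-1$ when $\alpha\notin\Z^n$ (the off-diagonal contribution $1\mapsto\omega$ cancels one of the $n$ relations). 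Hence $V(\alpha,1)/\p(R_az)=0$ exactly when $n=1$ and $\alpha\notin\Z$, which together with the earlier cases settles (a) for $\beta\in\{0,1\}$ and (c).

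The case $\beta\ne0,1$, where $V(\alpha,\beta)$ should always be irreducible, is the main obstacle; I would proceed as follows. Let $S\ne0$ be a submodule. As above $R_aS$ is a $\p_\omega$-stable ideal, so $R_aS=R_a$; fix $1=\sum_if_ig_i$ with $g_i\in S$. A short computation yields $\sum_i(hf_i\p)\cdot g_i=\beta h'+Ch$ with $C:=\omega+(\beta-1)\sum_if_i'g_i\in R_a$, so $\operatorname{im}(\p+C/\beta)\subseteq S$. Adding to the relation $1=\sum_if_ig_i$ the trivial relation $(\psi v)\cdot u+(-\psi u)\cdot v=0$ for $u,v\in S$, $\psi\in R_a$, changes $C$ by $(\beta-1)\psi\,W(u,v)$ with $W(u,v)=uv'-vu'$; thus $\operatorname{im}(\p+D)\subseteq S$ for every $D\in C+(\beta-1)W_S$, where $W_S$ is the ideal generated by the Wronskians $W(u,v)$, $u,v\in S$. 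Now $W_S$ is again $\p_\omega$-stable, since $\p_\omega(W(u,v))=W(\p_\omega u,v)+W(u,\p_\omega v)-\omega\,W(u,v)$ and $\p_\omega(S)\subseteq S$; and $W_S\ne0$ because $\dim_\C S\geq2$ (no one-dimensional subspace of $V(\alpha,\beta)$ is a submodule when $\beta\ne0$, as one sees by testing $f=t,t^2,(t-a_1)^{-1}$). Hence $W_S=R_a$, and since $\beta\ne0,1$ the set $C+(\beta-1)W_S$ is all of $R_a$; taking $h=1$ in $\operatorname{im}(\p+D)\subseteq S$ gives $D=(\p+D)(1)\in S$ for every $D\in R_a$, so $S=R_a$. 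Apart from this Wronskian-ideal manoeuvre, the step I expect to need the most care is the block computation of $\operatorname{coker}\p_\omega$ together with the accompanying reduction to a non-resonant translate of $\alpha$.
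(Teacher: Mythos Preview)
Your proof is correct but differs from the paper's in two substantive places. For the generic case $\beta\neq 0,1$, the paper derives directly from the module relations that $(\beta-1)f\,\partial(gz)\in M$ for all $f\in R_a$ and $gz\in M$ (their equations (5.3)--(5.7)); this gives $R_a\partial_\omega(M)\subseteq M$ at once and then a short Zorn's-lemma step together with their Lemma on $\partial$-stable $R_a$-subspaces finishes. Your Wronskian-ideal route---first $R_aS=R_a$, then varying the partition of unity $1=\sum f_ig_i$ to sweep $C$ over $C_0+(\beta-1)W_S=R_a$---is correct and conceptually appealing, but noticeably longer than the paper's three-line manipulation. For the ``Claim'' that $\partial(R_az)=R_az$ iff $n=1$ and $\alpha\notin\Z$, the paper argues by contradiction: normalizing so that $\operatorname{Re}(\alpha_i)<1$, it solves $\partial_\omega(f+\sum_if_i)=1$ and forces $f_i=0$, then a leading-coefficient count in $\partial(f)+f\omega=1$ yields the contradiction. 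Your block-triangular computation of $\operatorname{coker}\partial_\omega$ on $R_a=\C[t]\oplus P$ after passing to a non-resonant translate of $\alpha$ is correct (the translate always exists: set $\alpha_i+m_i=0$ for the integer coordinates and then adjust one non-integer $m_j$ to force $c_\infty\notin -\N$), and it gives more: $\dim\operatorname{coker}\partial_\omega=n$ when $\alpha\in\Z^n$ and $n-1$ otherwise. That last value is exactly what the paper records as unknown in the Remark following the theorem, so your approach in fact strengthens the result.
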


\begin{proof}
%
Suppose that $M$ is a nonzero submodule of $V(\a,\b)$. Take any $
g\in R_a$ with $gz\in M$. Formula \eqnref{5.1} shows that
\begin{equation}
f\p(gz)+\b\p(f)gz\in M,\,\,\forall\, f\in R_a, gz\in M,\label{5.2}
\end{equation}
which can also be written as
\begin{equation}
(\b-1)\p(f)gz+\p(fgz)\in M,\,\,\forall\,\, f\in R_a, gz\in M. \label{5.3}
\end{equation}
Taking $f=1$ and $f=t$ in \eqnref{5.2} respectively, we have $\p(gz),
t\p(gz)+\b gz\in M$, i.e.,
\begin{equation}
\p(gz), t\p(gz)\in M\,\,{\text{if}}\,gz\in M.\label{5.4}
\end{equation}
Replacing $gz$ with $t\p(gz)$ in \eqnref{5.3}, we get
\begin{equation}
(\b-1)\p(f)t\p(gz)+\p(ft\p(gz))\in M. \label{5.5}
\end{equation}
Replacing $gz$ with $\p(gz)$ and $f$ with $ft$ in \eqnref{5.3}, we get
\begin{equation}
(\b-1)\p(ft)\p(gz)+\p(ft\p(gz))\in M.\label{5.6}
\end{equation}
Then we subtract \eqnref{5.5} from \eqnref{5.6} to obtain
\begin{equation}
(\b-1)\p(gz)(\p(ft)-\p(f)t)=(\b-1)\p(gz)f\in M,\label{5.7}
\end{equation}
for all $f,g\in R_a$ with $gz\in M$.

\vskip 5pt
\noindent{\bf Case 1.} $\b\neq1$.

First suppose that there is some $hz\in M$ such that $\p(hz)\neq0$.
By \eqnref{5.7}, $R_a\p(hz)$ is a nonzero $R_a$-stable subspace contained
in $M$. By Zorn's lemma there exists a (nonzero) maximal $R_a$-stable subspace $S$
contained in $M$. It is obvious that $\p(S)\subseteq\p(M)\subseteq
M$ by \eqnref{5.4} and hence $\p(S)+S\subseteq M$. Take any $f,g\in R_a$
with $gz\in S$. Then
$$f\p(gz)=\p(fgz)-\p(f)gz\in\p(S)+S,$$
which gives $R_a\p(S)\subseteq\p(S)+S$ and hence
$R_a(\p(S)+S)\subseteq\p(S)+S$. Thus by the maximality of $S$, we
have $(\p(S)+S)\subseteq S$ and in particular $\p(S)\subseteq S$.
Thus $S=V(\a,\b)$ by \lemref{submod}, which implies $M=V(\a,\b)$.

\vskip 5pt \noindent{\bf Subcase 1.1.} $\a\in\Z^n$ and $\b=0$.

In this case, $z\in R_a^*$, and it is clear that $\C$ is a trivial
submodule of $V(\a,\b)$. If $M\neq \C$, then there exists some
$hz\in M$ such that $\p(hz)\neq0$, and hence $M=V(\a,\b)$ by the
previous discussion. This proves (b).

\vskip 5pt \noindent{\bf Subcase 1.2.} $\a\notin\Z^n$ or $\b\neq0$.

If $\p(hz)=0$ for all $hz\in M$, then $M=\C$, $z\in R_a^*$ and hence
$\a\in\Z^n$. By taking $f=t^2, g=z^{-1}$ in \eqnref{5.2}, we get $2\b t\in M$,
forcing $\b=0$, contradiction. Thus we must have $\p(hz)\neq0$ for
some $hz\in M$. Then
 $M=V(\a,\b)$.

\vskip 5pt
\noindent{\bf Case 2.} $\b=1$.

Consider $R_aM\neq0$, which is an $R_a$-module. Given any $f,g\in
R_a$ with $gz\in M$, from \eqnref{5.3} we know that $\p(fgz)\in M\subset
R_aM$, that is, $\p(R_aM)\subseteq R_aM$. Then by \lemref{submod}, we get
$R_aM=V(\a,\b)$, or in other words, $R_aM=R_az$.

Note that\eqnref{5.3} becomes $\p(fgz)\in M$ for any $f\in R_a$ and $gz\in
M$ in this case. This indicates that $\p(R_az)=\p(R_aM)\subseteq M$.
On the other hand, from\eqnref{5.1} we have
\begin{equation}
(f\p)\cdot (gz)=\p(fgz)\in\p(R_az),\,\,\,\forall \,\,f,g\in R_a,\label{5.8}
\end{equation}
which shows that $\p(R_az)$ is stable under the action of $\W_a$.
Moreover $\p(R_az)$ is an irreducible $\W_a$-module as it is contained in any nonzero $\W_a$-submodule $M$. From\eqnref{5.8}, we
see that the quotient module $V(\a,\b)/\p(R_az)$ is always trivial.

When $\a\in\Z^n$, we have obviously that
$$\p(R_az)=\sum_{k\in\Z_+}\C t^k\oplus\sum_{i=1}^n\sum_{k\in\N}\C¡¡(t-a_i)^{-k-1},$$
with $\dim(R_az/\p(R_az))=n$.

To complete the proof of (a) and (c), we need only to show
\medskip

\noindent{\bf Claim.} $\p(R_az)=R_az$ if and only if $n=1$ and
$\a\notin\Z$.
\medskip

\noindent{\it Proof of Claim.} When $n=1$, $\W_a$ is the classical
Virasoro algebra and $V(\a,\b)$ is just the intermediate series
modules of $\W_a$ (see [KR]). Thus the claim for $n=1$ is obvious
from the representation theory of Virasoro algebra.

Now we suppose that $n\geq 2$, and without loss of generality we may
assume that $\hbox{Re}(\a_i)<1$ for all $i=1,...,n$, where
$\hbox{Re}(\a_i)$ is the real part of $\a_i$. Suppose contrary to the
claim, we have $\p(R_az)=R_az$. Then there exists
$f=\sum_{j=0}^mb_jt^j\in\C[t]$ and
$f_i=\sum_{j=1}^{m_i}b_{i,j}(t-a_i)^{-j},\,i=1,...,n$ such that
$\p((f+\sum_{i=1}^nf_i)z)=z$, that is,
\begin{align}
\left(\p(f)+\sum_{i=1}^n\sum_{j=1}^{m_i}\frac{-jb_{i,j}}{(t-a_i)^{j+1}}\right)&z\ +\left(f+\sum_{i=1}^n\sum_{j=1}^{m_i}\frac{b_{ij}}{(t-a_i)^{j}}\right)&\sum_{i=1}^n\frac{\a_i}{t-a_i}z=z.\label{5.9}
\end{align}
If $f_i\neq0$ for some $1\leq i\leq n$, we may suppose
$b_{i,m_i}\neq0$, then comparing the coefficients of
$(t-a_a)^{-m_i-1}z$ in \eqnref{5.9}, we have
$-m_ib_{i,m_i}+\a_ib_{i,m_i}=0$, which implies $\a_i=m_i\in\N$,
contradicting the assumption $\hbox{Re}(\a_i)<1$. Thus  $f_i=0$ for
all $1\leq i\leq n$, i.e., \eqnref{5.9} becomes
\begin{equation}
\p(f)z+f\sum_{i=1}^n\frac{\a_i}{t-a_i}z=z\,\,\,{\rm \,\,\,\hspace{.5cm}or\hspace{.5cm}\,\,\,\,\,\,}\p(f)+f\sum_{i=1}^n\frac{\a_i}{t-a_i}=1,\label{5.10}
\end{equation}
which implies that $t-a_i$ divides $f$ for any $1\leq i\leq n$.
Since $\p(fz)=z$ infers $f\neq0$, then there exists some nonzero
$h(z)\in\C[t]$ such that $f=(t-a_1)(t-a_2)...(t-a_n)h$. In
particular, $\deg(f)\geq n$. Thus we may assume that $b_m\neq 0$ and
$\deg(f)=m\geq n\geq2$. Equation \eqnref{5.10} can be reformulated
explicitly as
$$
\left(\sum_{j=1}^mjb_jt^{j-1}\right)+\left(\sum_{j=0}^mb_jt^j\right)\left(\sum_{i=1}^n\frac{\a_i}{t-a_i}\right)=1
.$$
Multiply the above equation by $(t-a_1)(t-a_2)...(t-a_n)$, we get
\begin{equation*}
\prod_{i=1}^n(t-a_i)\left(\sum_{j=1}^mjb_jt^{j-1}\right)
 +\left(\sum_{j=0}^mb_jt^j\right)\left(\sum_{i=1}^n\a_i \prod_{k=1,k\neq i}^n(t-a_k)\right)  =\prod_{i=1}^n(t-a_i).
\end{equation*}
Again comparing the coefficients of $t^{n+m-1}$ in \eqnref{5.9} and
noticing $m>1$, we have $-mb_{m}+\sum_{i=1}^n\a_ib_{m}=0$, which
implies $\sum_{i=1}^n\a_i=m\geq n$, contradicting the assumption
$\hbox{Re}(\a_i)<1$. Thus $\p(R_az)\ne R_az$. The claim is proved
and the theorem follows.
%
%
%
%

(d).  In this case one can easily verify that the following linear map is a module isomorphism:
$$\varphi:  V(\alpha,0)\to \partial ( R_az),\,\,\, fz\mapsto \partial ( fz),$$
for any $f\in R_a$.

(e).  Follows from (c) and (d).
\end{proof}

\begin{remark} We do not know the dimension of the module $R_az/\p(R_az)$ with trivial action when $\beta=1$, $n>1$ and $\alpha\not\in \mathbb Z^n$.
\end{remark}

\begin{remark} As we said at the beginning of the paper, the centerless  $n$-point Virasoro algebras, ${\W_a}$,
 are genus zero Krichever-Novikov type algebras. For positive genus  Krichever-Novikov type
 algebras, one can expect to study similar properties as this paper
 does. But we have noticed that the positive genus  Krichever-Novikov type
 algebras behave quite differently from genus zero Krichever-Novikov type algebras, and it is even harder to obtain the same results.
 Now we are working on genus $1$ and $2$  Krichever-Novikov type
 algebras in another paper.
\end{remark}

\begin{remark} Representations in Section 5 are for centerless  $n$-point Virasoro algebras, ${\W_a}$.
It will be very interesting to investigate representations for
$n$-point Virasoro algebras with nonzero central actions. One may
study highest-weight-like modules similar to usual highest weight
modules. But note that the algebra $\tilde{\W_a}$ does not have a
good $\Z$-gradation or a triangular decomposition as defined in
\cite{MP}.
\end{remark}
\begin{remark}. The restriction of the module structure on $V(\alpha, \beta)$ to the centerless Virasoro algebra $\Der(C[t,t^{-1}]$ is completely determined in \cite{MR3056686}.
\end{remark}

\section*{Acknowledgments}

B.C. was partially supported by a College of Charleston faculty research and development grant; X.G. is partially supported by NSF of China (Grant 11101380); R.LuÊ is partially supported by NSF of China (Grant 11371134) and Jiangsu Government Scholarship for Overseas Studies (JS-2013-313); K.Z. is partially supported by NSF of China (Grant 11271109) and NSERC. The first author would like to thank K.Z. and Wilfred Laurier University for their hospitality during his summer 2010 visit to Canada where this project began. All authors like to thank the referee for corrections and good suggestions. 

\def\cprime{$'$} \def\cprime{$'$} \def\cprime{$'$}
\providecommand{\bysame}{\leavevmode\hbox to3em{\hrulefill}\thinspace}
\providecommand{\MR}{\relax\ifhmode\unskip\space\fi MR }

\providecommand{\MRhref}[2]{%
  \href{http://www.ams.org/mathscinet-getitem?mr=#1}{#2}
}
\providecommand{\href}[2]{#2}

\appendix
\section{}\label{examples}
We give in this section examples of various $a$  that give the
automorphism groups of Klein listed in Theorem 13 as a series of
lemmas.
\begin{lemma}\label{ringisolem}
Let $n\ge4$ and $a=(a_1, a_2, \cdots, a_n)$ where $ a_{k}=\zeta^{k}$
for the primitive $n$-th root of unity $\zeta=\exp(2\pi \imath /n)$.  Then
Aut(${\mathcal{V}}_a)\cong C_n$, the cyclic group of order $n$.
\end{lemma}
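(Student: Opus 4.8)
The plan is to use the structural description of $\Aut(\W_a)$ obtained in \thmref{2ndisothm} together with \Corref{cyclic}, and to check directly that for $a_k=\zeta^k$ there are no automorphisms of the second kind, while the automorphisms of the first kind form exactly $C_n$. First I would normalize: the points $\{\zeta,\zeta^2,\dots,\zeta^n\}=\{\zeta^0,\zeta^1,\dots,\zeta^{n-1}\}$ are the full set of $n$-th roots of unity, and since these sum to $0$, condition \eqnref{3.1} holds, so \thmref{2ndisothm} applies directly without a preliminary translation.

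For the first-kind automorphisms, I would apply part (a) of \thmref{2ndisothm}: the set of first-kind automorphisms corresponds to partitions of $\{\zeta^0,\dots,\zeta^{n-1}\}$ into cosets $A_d b_i$ of a group $A_d$ of $d$-th roots of unity, each such partition contributing $d$ automorphisms. The obvious choice $d=n$, $r=1$, $b_1=1$ realizes the whole set of roots of unity as a single orbit $A_n$, giving $n$ automorphisms $\sigma^j:\,t\mapsto\zeta^j t$, $j=0,\dots,n-1$. By \Corref{cyclic} these form a cyclic group of order (a divisor of, here exactly) $n$; and since $\W_a$ has at most $n$ first-kind automorphisms (stated after \Corref{cyclic}), this subgroup is all of them and is $\cong C_n$. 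So the first-kind part of $\Aut(\W_a)$ is exactly $C_n$, generated by the rotation $t\mapsto \zeta t$.

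The main obstacle — and the real content — is ruling out automorphisms of the second kind. By part (b) of \thmref{2ndisothm} these are in bijection with triples $(i,i',c)$ satisfying \eqnref{3.2}, i.e. with pairs of indices $i,i'$ and a scalar $c\in\C^*$ such that
$$
\{\,c(\zeta^j-\zeta^i)^{-1}\ :\ j\neq i\,\}=\{\,\zeta^j-\zeta^{i'}\ :\ j\neq i'\,\}.
$$
I would argue this is impossible for $n\geq 4$ by comparing the two multisets of $n-1$ complex numbers. The right-hand side consists of the nonzero differences of $n$-th roots of unity shifted to a common vertex; writing $\zeta^j-\zeta^{i'}=\zeta^{i'}(\zeta^{j-i'}-1)$ one sees these are $\zeta^{i'}$ times the fixed set $\{\omega-1:\omega^n=1,\omega\neq1\}$, which has a specific configuration of moduli (the $n-1$ chord lengths $2\sin(k\pi/n)$, $k=1,\dots,n-1$, each occurring with the multiplicity dictated by how many $k$ give the same value). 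The left-hand side, after the same factorization $\zeta^j-\zeta^i=\zeta^i(\zeta^{j-i}-1)$, becomes $c\zeta^{-i}$ times the set of \emph{reciprocals} $\{(\omega-1)^{-1}:\omega^n=1,\omega\neq1\}$. So \eqnref{3.2} forces the set $\{|\omega-1|:\omega\neq1\}$ to be a scalar multiple of $\{|\omega-1|^{-1}:\omega\neq1\}$, i.e. the sorted sequence $2\sin(k\pi/n)$, $k=1,\dots,\lfloor n/2\rfloor$ (with appropriate multiplicities) must be, up to one global positive scalar, the reverse of the sequence of its reciprocals. Equivalently, $2\sin(\pi/n)\cdot 2\sin((n-1)\pi/n)=2\sin(2\pi/n)\cdot 2\sin((n-2)\pi/n)=\cdots$ must all be equal, and one checks $\sin(k\pi/n)\sin((n-k)\pi/n)=\sin^2(k\pi/n)$ is \emph{not} independent of $k$ for $n\geq4$ (it is constant only for $n\leq3$). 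Hence no valid $(i,i',c)$ exists, there are no second-kind automorphisms, and $\Aut(\W_a)=\Aut(R_a)\cong C_n$. I expect the modulus-comparison step to require a short but careful bookkeeping of multiplicities (distinguishing $n$ even from $n$ odd, since then $\omega=-1$ contributes a chord of length $2$ with multiplicity one), but the underlying inequality $\sin(k\pi/n)\neq\sin(\ell\pi/n)$ for $1\le k<\ell\le n/2$ makes the argument go through cleanly; alternatively one can invoke \thmref{Klein} to know $\Aut(R_a)$ is cyclic, dihedral, or platonic, and then simply exhibit the order-$n$ cyclic group and rule out a reflection by the same modulus obstruction.
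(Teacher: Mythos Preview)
Your plan for the first-kind automorphisms is fine, but the second-kind argument has a real gap. After correctly reducing \eqnref{3.2} to the statement that the multiset $\{|\omega-1|:\omega^n=1,\ \omega\neq1\}$ is a positive scalar multiple of $\{|\omega-1|^{-1}:\omega^n=1,\ \omega\neq1\}$, your ``equivalently'' step misidentifies the pairing. You pair $k$ with $n-k$, but $\sin(k\pi/n)=\sin((n-k)\pi/n)$, so this pairs each chord length with itself; the condition you write down, ``$\sin^2(k\pi/n)$ is independent of $k$'', is not what the sorted-reverse matching actually gives. The correct pairing on the \emph{distinct} values $2\sin(k\pi/n)$, $1\le k\le \lfloor n/2\rfloor$, is $k\leftrightarrow \lfloor n/2\rfloor+1-k$, and for $n=5$ that single product $\sin(\pi/5)\sin(2\pi/5)$ is trivially ``constant''. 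Concretely, for $n=5$ the moduli multiset is $\{2\sin(\pi/5),2\sin(\pi/5),2\sin(2\pi/5),2\sin(2\pi/5)\}$, and with $|c|=4\sin(\pi/5)\sin(2\pi/5)$ it \emph{is} a scalar multiple of its reciprocal multiset. So the modulus comparison alone cannot exclude second-kind automorphisms when $n=5$, and your fallback ``rule out a reflection by the same modulus obstruction'' inherits the same defect.

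A clean repair is to keep the full complex equality rather than pass to absolute values. Summing both sides of \eqnref{3.2} (with $i=i_0$ fixed, say $a_{i_0}=1$) and using $\sum_{k=1}^{n-1}(\zeta^k-1)^{-1}=-(n-1)/2$ forces $c=2n\zeta^{i'}/(n-1)$. Then summing squares and using $\sum_{k=1}^{n-1}(\zeta^k-1)^{-2}=(n-1)(5-n)/12$ (obtained from the logarithmic derivative of $(t^n-1)/(t-1)$ at $t=1$) yields, after substituting this $c$, the equation $(n-1)(5-n)/12=(n-1)^2/(4n)$, i.e.\ $(n-3)(n+1)=0$. Hence for $n\ge4$ no triple $(i,i',c)$ satisfies \eqnref{3.2}, and $\Aut(\W_a)=C_n$. (The paper states the lemma in the appendix without supplying a proof, so there is no argument there to compare against; the point is simply that your modulus shortcut needs to be replaced by a genuine use of the complex constraints.)
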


\begin{lemma}
Let $n\ge3$, and $a=(0, a_1, a_2, \cdots, a_n)$ where $
a_{k}=\zeta^{k}$ for a primitive $n$-th root of unity $\zeta$.
\begin{enumerate}
\item[(a).]
 If $n\ne 4$ then  Aut(${\mathcal{V}}_a)\cong D_n$, the dihedral
group of order $2n$. \item[(b).] If $n= 4$ then
Aut(${\mathcal{V}}_a)\cong S_4$.\end{enumerate}
\end{lemma}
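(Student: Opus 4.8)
Via \Corref{phit} together with the identification $\Aut(\mathcal{V}_a)\cong\Aut(R_a)$, the task reduces to determining the group $G$ of M\"obius transformations of $\mathbb{P}^1$ that preserve the marked set $S:=\{\infty,0\}\cup\mu_n$, where $\mu_n=\{\zeta,\zeta^2,\dots,\zeta^n=1\}$ is the group of $n$-th roots of unity; note that $0+\sum_k\zeta^k=0$, so the normalization \eqnref{3.1} holds automatically for this $a$. Since $|S|=n+2\ge 3$, a M\"obius transformation is determined by its action on $S$, so $G$ embeds in $\operatorname{Sym}(S)$ and in particular is finite; by \thmref{Klein}, $G$ is isomorphic to one of $C_m$, $D_m$, $A_4$, $S_4$, $A_5$.

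The plan is first to record the containment $D_n\le G$: the rotation $\rho\colon z\mapsto\zeta z$ fixes $0$ and $\infty$ and cycles $\mu_n$, the inversion $\tau\colon z\mapsto 1/z$ interchanges $0$ and $\infty$ and sends $\zeta^k\mapsto\zeta^{-k}$, and $\rho^n=\tau^2=1$, $\tau\rho\tau^{-1}=\rho^{-1}$. Next, since every $G$-orbit in $S$ is a union of $D_n$-orbits and the $D_n$-orbits are exactly $\{0,\infty\}$ and $\mu_n$, there is a dichotomy: either $\{0,\infty\}$ is $G$-invariant, or $S$ is a single $G$-orbit. In the first case the only M\"obius maps stabilizing $\{0,\infty\}$ as a set are $z\mapsto cz$ and $z\mapsto c/z$, and preservation of $\mu_n$ forces $c\in\mu_n$, so $G=D_n$. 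Hence everything hinges on the transitive case.

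The heart of the argument is to show the transitive case occurs only for $n=4$, where it forces $G\cong S_4$. Here $[G:G_p]=n+2$ for every $p\in S$, and a point stabilizer $G_p$ is a finite group of M\"obius transformations fixing the point $p\in\mathbb{P}^1$, hence cyclic (conjugate $p$ to $\infty$; a finite subgroup of the affine group $z\mapsto az+b$ has a further common fixed point, so is conjugate into $\mathbb{C}^{*}$). Since $\rho\in G_0$ has order $n$, we get $n\mid|G_0|$, so $|G_0|\ge n\ge 3$. Now run through \thmref{Klein}: $G\ne C_m$ as $D_n$ is nonabelian; $G\ne A_4$ since $A_4$ has no dihedral subgroup of order $\ge 6$; if $G=D_m$ then $D_n\le D_m$ gives $n\mid m$, the cyclic group $G_0$ of order $\ge3$ lies in the rotation subgroup $C_m$, and since some power of the generator $r$ of $C_m$ equals $\rho$ (which, having order $\ge 3$, has exactly the fixed points $0,\infty$) the generator $r$ itself fixes $\{0,\infty\}$, whence $C_m\le G_0\le C_m$ and $[G:G_0]=2\ne n+2$; if $G=A_5$ then $D_n\le A_5$ forces $n\in\{3,5\}$, but $n+2=5$ would make $G_0$ an index-$5$ (hence $\cong A_4$, non-cyclic) subgroup and $n+2=7\nmid 60$; if $G=S_4$ then $D_n\le S_4$ forces $n\in\{3,4\}$, and $n+2=5\nmid 24$ excludes $n=3$, leaving only $n=4$ with $|G_0|=4$ (cyclic, generated by a $4$-cycle). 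Combining: if $n\ne 4$ the transitive case is impossible, so $G=D_n$, which is (a). For $n=4$ the six points are $0,\infty,\pm1,\pm i$, the vertices of a regular octahedron inscribed in the Riemann sphere; exhibiting the order-$3$ symmetry $\mu\colon z\mapsto\frac{z+i}{z-i}$, which permutes $S$ by $0\mapsto-1\mapsto-i\mapsto 0$ and $\infty\mapsto1\mapsto i\mapsto\infty$, shows $G\supsetneq D_4$, so we are necessarily in the transitive case and therefore $G\cong S_4$, which is (b).

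The main obstacle is the transitive-case analysis: it requires combining Klein's list with the cyclicity of point stabilizers of $\mathbb{P}^1$ and with elementary facts about the dihedral subgroups of $A_4$, $S_4$, $A_5$ to isolate $n=4$ as the unique exception. A minor computational point is the verification that $\mu$ genuinely preserves the six-element set when $n=4$; alternatively one may simply invoke that the group of conformal symmetries of the inscribed octahedron is the octahedral rotation group $S_4$.
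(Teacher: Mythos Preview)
Your argument is correct and complete. The paper itself offers no proof for this lemma (nor for the companion $C_n$ lemma); the appendix merely lists these as examples realizing the groups in \thmref{Klein}, so your write-up actually supplies what the paper omits. Your strategy---using \Corref{phit} to reduce to the M\"obius stabilizer of $\{\infty,0\}\cup\mu_n$, exhibiting the obvious dihedral subgroup, splitting into the intransitive versus transitive dichotomy on $S$, and then eliminating each entry of Klein's list in the transitive case via the cyclicity of one-point stabilizers and index arithmetic---is clean and entirely in the spirit of the tools developed in Section~3 (notably \thmref{2ndisothm}, \Corref{cyclic}, and \thmref{Klein}). The explicit order-$3$ map $\mu(z)=(z+i)/(z-i)$ that pushes $G$ past $D_4$ when $n=4$ is a nice concrete witness; the alternative you mention, recognising $\{0,\infty,\pm1,\pm i\}$ as the stereographic image of octahedral vertices, is exactly how one would expect the paper (following Klein \cite{MR0080930} and Shurman \cite{MR1427489}) to handle part~(b).
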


\begin{lemma}[\cite{MR1863996}, \cite{MR0080930}]  Let $\zeta=\exp (2\pi \imath  /5)$ and
$$
a_0=0, \enspace a_i= \zeta^{i-1}(\zeta+\zeta^4),\enspace 1\leq i\leq
5, \quad a_i=\zeta^{i-6}(\zeta^2+\zeta^3),\enspace 6\leq i\leq 10.
$$
The automorphism group of $\W_{(0,a_1,\dots, a_{10})}$ is
$A_5$ where  the automorphisms are
\begin{gather*}
t\mapsto \zeta^jt,\quad t\mapsto -\frac{1}{\zeta^j t}, \\
t\mapsto \zeta^j\frac{-(\zeta-\zeta^4)\zeta^lt +(\zeta^2-\zeta^3)}{(\zeta^2-\zeta^3)\zeta^lt+(\zeta-\zeta^4)}  \\
t\mapsto \zeta^j\frac{(\zeta^2-\zeta^3)\zeta^lt+(\zeta-\zeta^4)}
{(\zeta-\zeta^4)\zeta^lt -(\zeta^2-\zeta^3)},
\end{gather*}
for $j,l=0,\dots 4$.
\end{lemma}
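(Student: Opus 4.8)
The plan is to reduce the computation of $\Aut(\W_a)$ to the determination of a finite group of M\"obius transformations, and then to recognize that group as the rotation group of the regular icosahedron. By \thmref{liealgiso} every automorphism of $\W_a$ is induced by a fractional linear transformation, and by \Corref{phit} such a transformation induces an automorphism exactly when, as an element of $\text{Aut}(\hat{\mathbb C})$, it permutes the twelve-point set
$$
S=\{\infty,\,0,\,a_1,\dots,a_{10}\}\subset\hat{\mathbb C}.
$$
Hence $\Aut(\W_a)\cong\{g\in\text{Aut}(\hat{\mathbb C})\mid g(S)=S\}$, and it suffices to identify this stabilizer.

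First I would observe that $S$ is the vertex set of a regular icosahedron inscribed in the Riemann sphere. Indeed $\{a_1,\dots,a_5\}=\{\zeta^i(\zeta+\zeta^4)\}$ and $\{a_6,\dots,a_{10}\}=\{\zeta^i(\zeta^2+\zeta^3)\}$ are the two orbits of the order-five rotation $t\mapsto\zeta t$, i.e. two concentric regular pentagons, and adjoining the ``poles'' $0$ and $\infty$ gives twelve points. The numerical fact that makes these the icosahedral vertices is
$$
(\zeta+\zeta^4)(\zeta^2+\zeta^3)=\zeta+\zeta^2+\zeta^3+\zeta^4=-1,
$$
equivalently $\zeta^2+\zeta^3=-1/(\zeta+\zeta^4)$, which is precisely the antipodality relation linking the two pentagons. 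Using this I would check the two easy families of listed maps: $t\mapsto\zeta^j t$ fixes $0,\infty$ and rotates each pentagon, while $t\mapsto-1/(\zeta^j t)$ swaps $0\leftrightarrow\infty$ and, by the displayed identity, swaps the two pentagons; both therefore preserve $S$.

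Next I would verify that the two remaining families, each of twenty-five M\"obius maps, also send $S$ to $S$; this is a direct but somewhat delicate computation in the cyclotomic field $\Q(\zeta)$, tracking which $a_i$ is the image of each vertex. Granting this, the $5+5+25+25=60$ listed transformations all preserve $S$, and one knows classically (see \cite{MR1863996}, \cite{MR0080930}) that they are exactly the rotation group of the icosahedron and form a group isomorphic to $A_5$ --- this can also be confirmed by exhibiting generators, say $t\mapsto\zeta t$ together with one of the order-two maps, and checking the defining relations of $A_5$. Thus $A_5\hookrightarrow\Aut(\W_a)$.

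For the reverse inclusion I would invoke \thmref{Klein}: $\Aut(\W_a)$ is a finite subgroup of $\text{Aut}(\hat{\mathbb C})$, hence one of $C_n$, $D_n$, $A_4$, $S_4$, $A_5$. Since it contains the nonabelian simple group $A_5$ it cannot be cyclic or dihedral (every subgroup of a dihedral group is cyclic or dihedral), and $|A_4|=12$, $|S_4|=24$ are too small; so $\Aut(\W_a)\cong A_5$ and the listed maps exhaust it. The main obstacle is precisely the explicit check that the two twenty-five-element families preserve $S$: this is the only genuinely computational point, everything else being either formal (the reductions via \thmref{liealgiso}, \Corref{phit}, and the final appeal to \thmref{Klein}) or a one-line verification.
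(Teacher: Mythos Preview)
Your proposal is correct and is essentially the same approach as the paper's: the paper's own proof consists entirely of the sentence ``The proof follows from results in \cite{MR0080930} and \cite{MR1427489}'', i.e.\ a bare citation to the classical icosahedral theory of Klein and its exposition in Shurman. What you have written is precisely an unpacking of that citation --- reducing via \thmref{liealgiso} and \Corref{phit} to M\"obius maps preserving the twelve-point set, recognizing this set as the icosahedral vertices, and then invoking \thmref{Klein} to pin down the group --- so there is no genuine difference in route, only in level of detail.
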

\begin{proof}  The proof follows from results in \cite{MR0080930} and
\cite{MR1427489}.
\end{proof}

\end{document}